\documentclass[11pt]{amsart}
\usepackage{amssymb}
\usepackage{amsmath}
\usepackage{amsthm}
\usepackage{mathrsfs}
\usepackage{faktor}
\usepackage{tikz}
\usepackage{thmtools}
\usepackage{mathtools}

\usepackage{hyperref}

\usetikzlibrary{arrows}
\usetikzlibrary{shapes.geometric}
\usetikzlibrary{decorations.pathreplacing,decorations.markings}
\usepackage[utf8]{inputenc}
\usepackage{enumitem}
\usepackage{wasysym}

\usepackage[capitalise]{cleveref}

\newtheorem{theorem}{Theorem}[section]
\newtheorem{lem}[theorem]{Lemma}
\newtheorem{proposition}[theorem]{Proposition}
\newtheorem{cor}[theorem]{Corollary}
\newtheorem{qu}[theorem]{Question}

\theoremstyle{definition}
\newtheorem{dfn}[theorem]{Definition}

\newtheorem{rmk}[theorem]{Remark}

\numberwithin{theorem}{section}

\DeclareMathOperator{\Mon}{Mon}

\DeclareMathOperator{\BS}{BS}

\DeclareMathOperator{\Fix}{Fix}

\DeclareMathOperator{\expsum}{expsum}
\DeclareMathOperator{\Aut}{Aut}

\newcommand{\Z}{\mathbb{Z}}
\newcommand{\N}{\mathbb{Z}_{\geq 0}}

\title[Undecidable Diophantine problems]{Undecidable Diophantine problems in \\ generalisations of one-relator groups}
\subjclass[2020]{20F05, 20F65, 20F10}
\keywords{Diophantine problem, one-relator group, one-relator product, generalised Baumslag--Solitar group}

\thanks{The research of the authors was supported by the EPSRC Fellowship grant EP/V032003/1 ‘Algorithmic, topological and geometric aspects of infinite groups, monoids and inverse semigroups’. }

\begin{document}

\maketitle

\vspace{-4mm}

\begin{center}
 ROBERT D. GRAY
 \footnote{School of Engineering, Mathematics, and Physics, 
University of East Anglia, Norwich NR4 7TJ, England.
 Email \texttt{Robert.D.Gray@uea.ac.uk}.
 }
 and
 ALEX LEVINE
 \footnote{School of Engineering, Mathematics, and Physics, University of 
East Anglia, Norwich NR4 7TJ, England.
 Email \texttt{A.Levine@uea.ac.uk}.
 }
\end{center}

\vspace{-4mm}

\begin{abstract}
Motivated by the 
open problem of whether all one-relator groups have decidable Diophantine problem,
in this paper we prove a collection of undecidability results about the Diophantine problem for several families of groups that are close to one-relator groups in various ways. We prove that there is a generalised Baumslag--Solitar group with an undecidable Diophantine problem.  Using our example we show there is a group with an undecidable Diophantine problem that is quasi-isometric to a one-relator group. Also, we prove that there is a one-relator product of cyclic groups with an undecidable Diophantine problem. In addition, we show that there there is a one-relator group $G$, with a single fixed finite rank free subgroup $H$, such that the Diophantine problem for $G$ with $H$-constraints is undecidable. The related open question of whether there is a free-by-cyclic group with undecidable Diophantine problem is also discussed, and we prove that there is a free-by-free group of the form  $F_3 \rtimes F_2$ with an undecidable Diophantine problem.
\end{abstract}

\section{Introduction}
It is a longstanding open problem whether all one-relator groups have decidable
conjugacy problem; see \cite[Problem (O5)]{baumslag2002open}.
See \cite[Section~1.8]{CFMarcoOneRelatorSurvey} for background on this problem. As explained there, while it remains open in general, for some classes of one-relator groups it is known to be decidable e.g. cyclically pinched one-relator groups \cite{Lipschutz1966}, one-relator groups with torsion by the B. B. Newman Spelling Theorem 
and more generally for any hyperbolic one-relator group \cite{Gromov1987}, and free-by-cyclic one-relator groups \cite{Bogopolski2006}. 
Other work on the conjugacy problem in one-relator groups, and in the more general class of one-relator products, includes 
\cite{Duncan1991, Minasyan2013, Pride2008}.
In recent related work in \cite{bridson2025conjugacy} the complexity of the conjugacy problem, and the conjugacy search problem, have been studied in detail for the two generator one-relator hydra groups.
We note that it is also not know whether every finitely generated subgroup of a one-relator group has decidable conjugacy problem; see 
\cite[Problem 1.8.2]{CFMarcoOneRelatorSurvey}.

The conjugacy problem for one-relator groups is a special case of another open problem which asks whether every one-relator group has decidable Diophantine problem -- the problem of whether a system of equations over the group has a solution. This problem is known to be decidable for free semigroups and free groups \cite{Makanin_semigroups, Makanin_eqns_free_group, Razborov_thesis} and more generally for hyperbolic groups \cite{dahmani_guirardel, Sela}. 
The Diophantine problem has also been shown to be decidable for right-angled Artin groups \cite{DiekertMuscholl}, virtually direct products of hyperbolic groups \cite{CiobanuHoltRees}, and various torsion-free relatively hyperbolic groups \cite{dahmani2009existential}. 
Other recent important work in this area includes 
\cite{CasalsRuiz2011} which gives algorithms for constructing Makanin--Razborov diagrams to encode the solution sets of equations over RAAGs, 
and 
the more recent paper \cite{CasalsRuiz2024} on the elementary theory of graph products of groups.
Not all $3$-manifold groups have decidable Diophantine problem. Indeed, 
several infinite families of Seifert 3-manifold groups with undecidable Diophantine problem are given in \cite{GrayLevineArxiv251207690}. 
It remains an interesting open problem to classify those 3-manifold groups for which the Diophantine problem is decidable.  

Among one-relator groups, the Diophantine problem for Baumslag--Solitar groups has received attention recently in the literature, with a particular focus on the groups $BS(1,n)$ for $n \in \mathbb{N}$ with $n \geq 2$. The Diophantine problem for quadratic equations in $BS(1, n)$ was shown to be decidable in
\cite{kharlampovich2020diophantine}.
In related work in 
\cite{Duncan2023}
EDT0L systems are used to solve some equations in this class of Baumslag--Solitar groups. Note that since $BS(1,n)$ is solvable but not virtually abelian it follows from \cite{Ersov72, noskov1984elementary, romanovskiui1981elementary}
that $BS(1,n)$ hence has an undecidable first-order theory. Other work related to the Diophantine problem in $BS(1,n)$ includes 
\cite{Mandel2023}
and \cite{Mandel2023b}. 
In \cite{CFOneRelatorDiophantine} it is observed that results from 
may be applied to deduce that the unimodular Baumslag--Solitar groups $BS(m,m)$ and $BS(m,-m)$ have decidable Diophantine problem. In that same paper it is shown that all torus knot groups have decidable Diophantine problem and more generally so do all two-generator one-relator groups with defining relator of the form $a^m = b^n$. In \cite{CiobanuGrayLevineInPrep} it is shown that this result generalises in two different ways. Firstly, every cyclically pinched one-relator group has decidable Diophantine problem, and secondly every one-relator group with non-trivial centre has decidable Diophantine problem.   

The Diophantine problem is also open for one-relator monoids. In fact, even the word problem remains open that that class; see \cite{CFOneRelatorMonoidsSuvey}. Some work on the Diophantine problem for one-relator monoids includes 
\cite{Kharlampovich2019} and  
\cite{Garreta2021}
where it is shown that if this problem were decidable it would provide a positive answer to a longstanding open problem in theoretical computer science about decidability of word equations with length constraints. 

In this paper we will prove a collection of undecidability results about the Diophantine problem for several families of groups that are close to one-relator groups in various ways. As well as being of interest in their own right, these results may be viewed as evidence 
suggesting there may exist 
one-relator groups with undecidable Diophantine problem.  
We now summarise the main new results that we shall prove in this paper, and explain how they relate to other work in the literature. For any undefined notions we refer the reader to Section~\ref{sec:prelim}.

As explained above, the Diophantine problem is open for Baumslag--Solitar groups $BS(m,n)$. The first result we prove in this paper is the following.  

\noindent \textbf{Theorem~A.} 
\emph{
There is a generalised Baumslag--Solitar group with an undecidable Diophantine problem.} 

We shall then show how the example we use for Theorem~A may be used to prove the following. 

\noindent \textbf{Theorem~B.} 
\emph{
There is a group $G$ with an undecidable Diophantine problem such that $G$ is quasi-isometric to a one-relator group.}

Related to this, it is unknown whether decidability of the Diophantine problem is a quasi-isometry invariant among finitely presented groups; and it is is open whether it is preserved under taking finite index subgroups or extensions.

\noindent \textbf{Theorem~C.} 
\emph{ There is a one-relator product of cyclic groups 
with an undecidable Diophantine problem. }

We note that both the word problem and the conjugacy problem are open in general for one-relator products of cyclic groups. 

\noindent \textbf{Theorem~D.} 
\emph{ 
There is a one-relator group $G$,  
with a single fixed finite rank free subgroup $H$, 
such that the Diophantine problem for $G$ with $H$-constraints is undecidable.} 

Related to this, the subgroup membership problem is open for one-relator groups;
see \cite[Problem 1.8.8]{CFMarcoOneRelatorSurvey}.

We also consider the related open problem of whether there is a free-by-cyclic group $F_n \rtimes \Z$ group with an undecidable Diophantine problem. We prove:  

\noindent \textbf{Theorem~E.} 
\emph{ 
There is a group of the form $F_3 \rtimes F_2$ with an undecidable Diophantine problem.} 

As far as the authors are aware, the conjugacy problem is open for groups of the form $F_3 \rtimes F_2$. Note that any group of this form has just 6 defining relations and the conjugacy problem remains open for $k$-relator groups for all $1 \leq k \leq 10$; see \cite[Section 1.8]{CFMarcoOneRelatorSurvey}. 
The conjugacy problem is however known to be decidable for free-by-cyclic groups $F_n \rtimes \Z$ (see \cite{Bogopolski2006}), a result which has recently been extended to arbitrary ascending HNN-extensions \cite{Logan}. 
Also, in 
\cite{BogopolskiMartinoVentura} 
it is proved that the conjugacy problem is decidable for all groups of the form $F_2 \rtimes F_m$, and an example is given of a group of the form $F_3 \rtimes F_{14}$ with undecidable conjugacy problem. 

For the rest of the introduction we shall give more details of the results above and explain how they relate to other work in the literature.   
We will establish Theorem~A and Theorem~B by proving the following result about generalised Baumslag--Solitar groups which, as is standard, we abbreviate to GBS groups.   

\textbf{Theorem 3.4.}
\emph{
Let \(p, q \in \mathbb{Z}\) be distinct prime numbers. Then the GBS group \[G = \langle a, s, t \mid sas^{-1} = a^p, tat^{-1} = a^q \rangle \]
has an undecidable Diophantine problem.} 

Combining our result with results of 
Whyte \cite{Whyte01} on quasi-isometry classes of generalised Baumslag--Solitar groups we obtain the following corollary.  

\textbf{Corollary 3.7.}
\emph{There is a group \(G\) that is quasi-isometric to all Baumslag-Solitar groups \(\BS(m, n)\), where \(1 < m < n\), such that \(G\) has an undecidable Diophantine problem.}

It would be of interest to classify the GBS groups with decidable Diophantine problem although this problem is likely to be difficult since it is still open for Baumslag--Solitar groups. 

Several results for one-relator groups have been successfully extended to
one-relator products. Often these results are concerned with one-relator
products of locally indicable groups; see e.g.  \cite{Howie84}. More recently in
\cite{JaikinZapirainLintonSanchezPeralta} it is shown that a torsion-free
one-relator product of locally indicable groups is coherent provided that both
factor groups are coherent. 
A generalisation of the conjugacy problem, called the genus problem, has been studied for one-relator products of locally indicable groups in \cite{Duncan1991}.
More background on one-relator products the introduction to the paper
\cite{DuncanArye} and in the survey articles \cite{DuncanHowie92} and \cite{FineRosenberger}. 

Our main result about these groups concerns one-relator products of cyclic groups. Of course this class generalises one-relator groups which are all quotients of a free product of finitely many copies of $\Z$ by a single defining relation.
One-relator products of cyclic groups have been extensively studied. 
Indeed,  
Chapter 6 and 7 of the book 
\cite{FineRosenberger}
are devoted to the topic of one-relator products of cyclic groups. 
Important work on one-relator products of cyclic groups includes 
\cite{
Brodskii84,
CampbellHeggieRobertsonThomas,
DuncanHowie93,
FineHowieRosenberger88,
Howie81,
HowieFourthPowers}.
One-relator products of cyclic groups are of interest in part due to their connections with topics like generalized triangle groups \cite{BaumslagMorganShalen}.

While, as explained in the introduction of the paper \cite{DuncanArye}, the word problem is known to be decidable in certain one-relator products of cyclic groups, it remains open in general and that same is true of the conjugacy problem for one-relator products of cyclic groups.  
Our result Theorem~C will be proved by establishing the following result. 

\textbf{Theorem 4.3.}
\emph{There is a one-relator product of cyclic groups the form $(\Z \ast C_2) / \langle \langle r \rangle \rangle$ with undecidable Diophantine problem. Specifically the group
\[
  K_2 = \langle x, t \mid
x x^{x^t} = x^{x^t} x,  
\quad 
  t^2=1
  \rangle
\]
has an undecidable Diophantine problem.}

Another longstanding open algorithmic problem for one-relator groups is whether they have decidable subgroup membership problem (also sometimes called the generalised word problem); see \cite[Problem 1.8.8]{CFMarcoOneRelatorSurvey}. A decision problem that mixes both the Diophantine problem and the subgroup membership problem is the Diophantine problem with subgroup constraints. Given a finitely generated group $G$ and a finitely generated subgroup $H$ we say \emph{$G$ has decidable Diophantine problem with $H$-constraints} if there as an algorithm that takes any finite system of equations over $G$ together with a finite list of constraints of the form $X_1, \ldots, X_k \in H$ for variables $X_i$ and decides whether or not it has a solution. It is immediate from the definition that if a group $G$ has Diophantine problem with $H$-constraints then the membership problem for $H$ in $G$ must be decidable. We will prove below that while both Diophantine problem and the subgroup membership problem remain open in general for one-relator groups, when we combine them in this way then the problem does become undecidable. 
We shall prove Theorem~D by establishing the following result.  

\textbf{Theorem~5.1.}
\emph{The one-relator group $G = \langle c, q \mid cc^q = c^qc \rangle$ has a single fixed subgroup where $H$ is isomorphic to the free group of rank $6$ and such that the Diophantine problem for $G$ with $H$-constraints is undecidable.} 

The group $G = \langle c, q \mid cc^q = c^qc  \rangle$ in the statement of this theorem is the well-known  non-subgroup separable one-relator group of Burns, Karrass and Solitar \cite{Burns1987}.
It is also a particular instance of a hydra group in the sense of \cite{HydraGroups}. 
We do not know whether or not this group  
has a decidable Diophantine problem.
It is know (see e.g. \cite[Section~2]{Burns1987}) that this one-relator group 
$G = \langle c, q \mid cc^q = c^qc  \rangle$
is a free-by-cyclic group of the form $F_2 \rtimes \Z$ with presentation     
\[
  \langle a, b, s \mid sas^{-1} = ab, sbs^{-1} = b\rangle.
\]
Now free-by-cyclic groups has decidable conjugacy problem \cite{Bogopolski2006}, but the Diophantine problem for free-by-cyclic groups $F_n \rtimes \Z$ is open. 
While that question remains open, we shall show that there are relatively simple free-by-free groups with undecidable Diophantine problem.  
Specifically we shall prove the following result, which establishes Theorem~E.    

\textbf{Theorem 6.5.}
\emph{There is a group of the form $F_3 \rtimes F_2$ with an undecidable Diophantine problem. In particular the group
\[
\langle a, b,c, s, t \mid sas^{-1} = ab, sbs^{-1} = b, scs^{-1} = c, tat^{-1} = ba, tbt^{-1} = b, tct^{-1} = c \rangle
\]
has an undecidable Diophantine problem.}

As mentioned above, as far as the authors are aware, the conjugacy problem is open for groups of the form 
$F_3 \rtimes F_2$. 

\section{Preliminaries}
\label{sec:prelim}

In this section we shall give some definitions and notation that will be needed in the rest of the article. We assume the reader has familiarity with standard notions from combinatorial and geometric group theory. For more details we refer the reader to 
\cite{lyndon1977combinatorial, magnus2004combinatorial} and \cite{delaHarpeBook}.

\subsection{Groups, submonoids and quasi-isometries}
If \(G\) is a group and
	\(X \subseteq G\), we write \(\Mon\langle X \rangle\) for the submonoid
	of \(G\) generated by \(X\). If \(g \in G\), then we define
	\(C_G(g) = \{h \in G \mid gh = hg\}\); this set is called the
	\emph{centraliser} of \(g\).
For a non-empty alphabet $A$ and subset $R$ of $F_A \times F_A$, where $F_A$ denotes the free group on $A$, we use $\langle A \mid R \rangle$ to denote the group defined by the presentation with generators $A$ and defining relations $R$.  
Given any letter $s \in A$ and word $u \in F_A$ we use  $\expsum_{s}(u) \in \mathbb{Z}$ to denote the \emph{exponent sum of $s$ in $u$}, that is, the total number of occurrences of $s$ in $u$ minus the to total number of occurrences of $s^{-1}$ in $u$.    

If $G$ and $H$ are finitely generated groups with finite generating sets $A$ and $B$, respectively, then $G$ and $H$ are \emph{quasi-isometric} if the Cayley graph $\Gamma(G,A)$ of $G$ with respect to $A$ is quasi-isometric to $\Gamma(H,B)$ as a metric space; see \cite[Chapter IV]{delaHarpeBook}. Whether or not two finitely generated groups are quasi-isometric is independent of choice of finite generating sets for the groups. A group is quasi-isometric to its finite index subgroups, so commensurable finitely generated groups are quasi-isometric, but the converse does not hold in general.  

\subsection{Equations in groups}

    \begin{dfn}
		Let \(G\) be a finitely generated group and \(\mathcal X\) be a
		finite set. An \emph{equation} in \(G\) is an element \(w \in G
		\ast F(\mathcal X)\), which we denote by \(w = 1\). A
		\emph{solution} to \(w = 1\) is a homomorphism \(\phi \colon G \ast
		F(\mathcal X) \to G\), such that \(\phi(w)  = 1\) and \(\phi (g)=
		g\) for all \(g \in G\).  The elements of \(\mathcal X\) are called
		the \emph{variables} of \(w = 1\). A \emph{(finite) system of
		equations} in \(G\) is a finite collection of equations in \(G\)
		using the same set of variables. A \emph{solution} to a system
		\(\mathcal E\) of equations is a homomorphism that is a solution to
		all equations in \(\mathcal E\). If \(\mathcal E\) is a system of
		equations, when writing out \(\mathcal E\), we wll use the logical
		operator \(\wedge\) to delimit the equations in the system; that is
		\(\mathcal E = (w_1 = 1) \wedge (w_2 = 1) \wedge \cdots \wedge (w_n
		= 1)\).

	    The \emph{Diophantine problem} in \(G\) is the decision question which asks if
	    there is an algorithm taking as input a system \(\mathcal E\) of equations in \(G\)
	    and outputting whether or not \(\mathcal E\) admits a solution. 
	\end{dfn}

		We will often abuse notation by considering a solution to an equation
		with \(n \in \mathbb{Z}_{>0}\) variables \(X_1, \ldots, X_n\) in a
		group \(G\) to be an \(n\)-tuple of elements \((g_1,  \ldots,  g_n)\).
		The corresponding homomorphism \(\phi\) can be recovered from such a
		tuple by defining \(\phi\) to be the unique homomorphism satisfying
		\(\phi(g)  = g\) for all \(g \in G\) and \(\phi(X_i) = g_i\) for all
		\(i\). We additionally abuse notation by considering equations of the
		form \(w_1 = w_2\). Such an equation is used to denote the equation
		\(w_1w_2^{-1} = 1\).

When showing various groups have undecidable Diophantine problems, a
	key method is studying which sets can be `defined' using systems of
	equations. We begin with a definition of solution sets, and then
	formally define equationally definable.
    \begin{dfn}
        Let \(G\) be a finitely generated group and \(\mathcal E\) be a
        system of equations in \(G\). Let \((X_1, \ldots, X_n)\) be
		some 
        (not necessarily all) variables occurring in
		\(\mathcal E\). We define the \emph{set of solutions} to
        \((X_1, \ldots, X_n)\) in \(\mathcal E\) to be set
        \[
        \{(g_1, \ldots, g_n) : \textrm{ there exist } h_1, \ldots, h_m
                \in G \textrm{ such that } (g_1, \ldots, g_n, h_1, \ldots, h_m)
\text{ is a solution to } \mathcal E\}.\]
	Let \(D \subseteq G^n\) for some $n$. We say \(D\) is \emph{equationally
	definable} if there is a system \(\mathcal E\) of equations in \(G\),
	and a collection \((X_1, \ldots, X_n)\) of (not necessarily all) variables
	in \(\mathcal E\), such that \(D\) is the set of solutions to
	\((X_1, \ldots, X_n)\) in \(\mathcal E\). 
	\end{dfn}

\section{A GBS group with undecidable Diophantine problem}
A \emph{generalised Baumslag-Solitar group} (abbreviated to GBS group) is the fundamental group of a finite graph of groups where all vertex and edge groups are infinite cyclic. 
GBS groups have arisen in the literature in the study of finitely generated groups of cohomological dimension 2 \cite{Kropholler1990}, JSJ decompositions \cite{Kropholler1993}, one-relator groups with centre \cite{Collins1977, McCool1991, Pietrowski1974} and mapping tori \cite{levitt2015generalized}.

	The group in question we will be studying here is the generalised
	Baumslag-Solitar group \(G = \langle a, s, t \mid sas^{-1} = a^p, tat^{-1}
	= a^q \rangle\), where \(p\) and \(q\) are distinct primes. 

The group \(G\) can be realised as a free product with amalgamation of two solvable Baumslag-Solitar groups over an infinite cyclic subgroup, or alternatively, an HNN-extension of a solvable Baumslag-Solitar group.  We note later that it is quasi-isometric to \(\BS(2, 3)\).

Before we can attempt to discuss equations in \(G\), we must first construct a normal form with which to work. In this case, we are using the standard normal form for free products with amalgamation; see e.g. \cite[Chapter IV]{lyndon1977combinatorial}.

	\begin{lem}
		\label{GBS-norm-form}
		Let \(G = \langle a, s, t \mid sas^{-1} = a^p, tat^{-1} = a^q
		\rangle\), where \(p\) and \(q\) are distinct primes. 
Define
				the following sets of freely reduced words:
				\begin{align*} 
T_s & =    
\{ s^{-i} a^j s^k \mid i,k \in \Z_{> 0}, \; j \in \{0, \ldots, p^i - 1\}, \; j \not\in p\Z \}  \\
& \cup \{ s^k \mid k \in \Z_{\geq 0} \} \\ 
& \cup \{ s^{-i} a^j \mid i \in \Z_{\geq 0}, \; j \in \{0, \ldots, p^i - 1\} \}, \\ 
T_t & =    
\{ t^{-i} a^j t^k \mid i,k \in \Z_{> 0}, \; j \in \{0, \ldots, q^i - 1\}, \; j \not\in q\Z \}  \\
& \cup \{ t^k \mid k \in \Z_{\geq 0} \} \\ 
& \cup \{ t^{-i} a^j \mid i \in \Z_{\geq 0}, \; j \in \{0, \ldots, q^i - 1\} \}.   
\end{align*} 

		Then 
		\[
			\{a^r x_1 y_1 \cdots x_n y_n \mid r \in \Z, n \in \Z_{\geq 0},
					x_1 \in T_s, x_2, \ldots, x_n \in T_s \setminus \{1\}, 
			y_1, \ldots, y_{n - 1} \in T_t \setminus \{1\}, y_n \in T_t \}
		\]
		is a normal form for \(G\).
	\end{lem}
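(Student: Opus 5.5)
Our plan is to view \(G\) as the amalgamated free product \(G = B_s \ast_{\langle a \rangle} B_t\), where \(B_s = \langle a, s \mid sas^{-1} = a^p\rangle \cong \BS(1,p)\) and \(B_t = \langle a, t \mid tat^{-1} = a^q\rangle \cong \BS(1,q)\). Eliminating \(t\) via Tietze transformations gives exactly the given presentation, and \(\langle a\rangle\) is infinite cyclic in both factors. We then invoke the normal form theorem for free products with amalgamation \cite[Chapter IV]{lyndon1977combinatorial}. Fix right transversals \(T_s\) for \(\langle a\rangle\) in \(B_s\) and \(T_t\) for \(\langle a \rangle\) in \(B_t\), each containing \(1\). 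Every element of \(G\) is then uniquely of the form \(a^r z_1 \cdots z_m\), where the \(z_i\) are nontrivial transversal elements alternating between the two factors. Allowing \(x_1\) and \(y_n\) to be trivial covers the cases where the alternating sequence starts with a \(t\)-factor or ends with an \(s\)-factor. Once the transversal claim below is proved, the stated set is exactly this normal form.

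The main work is therefore to show that \(T_s\) is a set of right coset representatives of \(\langle a\rangle\) in \(B_s\); the case of \(T_t\) is identical with \(q\) in place of \(p\). We would use the standard description \(B_s \cong \Z[1/p] \rtimes \Z\). Here \(a \mapsto (1,0)\) and \(s \mapsto (0,1)\), with \(s\) acting by multiplication by \(p\), so that \(s^{-i}a^j s^k\) corresponds to \((j p^{-i}, k-i)\) up to the sign convention. Right cosets \(\langle a\rangle g\) for \(g = (x,m)\) are then determined by \(m\) together with a class of \(x\) modulo a suitable translate of \(\Z\). Concretely, \(\langle a \rangle g = \{(n + x, m)\}\) or \(\{(x + n p^{\pm m}, m)\}\), depending on the convention, and we would fix the convention by computing directly from \(sas^{-1}=a^p\).

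We then check that each coset contains exactly one element of \(T_s\), splitting into cases by the reduced denominator of the fractional part of the coset invariant.
\begin{itemize}
\item Trivial class with \(s\)-exponent \(k \ge 0\): the representative is \(s^k\).
\item Class with exact denominator \(p^i\), \(i>0\): the representative is \(s^{-i}a^j s^k\) with \(p \nmid j\) and \(0<j<p^i\) determined uniquely. The exponent \(k>0\) records the remaining shift.
\item Cosets not covered above: these correspond to \(s^{-i}a^j\) with \(0 \le j < p^i\).
\end{itemize}
This last case is where \(j\) may be divisible by \(p\), because \(s^{-i}a^{pj'} = s^{-(i-1)}a^{j'}s^{-1}\) has a genuinely different \(s\)-exponent. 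The main obstacle is this bookkeeping. The task is to make sure the three families are disjoint and exhaust all cosets, including the boundary cases \(i=0\) and \(j=0\), and that the coprimality condition appears in exactly the first family.

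Finally, we would note that the words are freely reduced and that the product \(a^r x_1 y_1\cdots x_n y_n\) with the nontriviality conditions is reduced in the amalgam sense. Uniqueness and existence then follow directly from the cited normal form theorem.
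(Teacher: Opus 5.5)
Your proposal is correct, and its skeleton is the paper's: realise \(G\) as \(B_s \ast_{\langle a\rangle} B_t\), show that \(T_s\) (and symmetrically \(T_t\)) is a right transversal of \(\langle a\rangle\) containing \(1\), and invoke the normal form theorem for amalgamated products. One small slip: to match the amalgam's presentation with the given one you eliminate the second copy of \(a\), not \(t\).

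The difference is in how you verify the transversal property. The paper starts from the Burillo--Elder normal form for \(\BS(1,p)\). It gets existence by writing \(j = dp^i + r\) and moving \(a^d\) to the left. It gets uniqueness by computing \(s^{-i_1}a^{j_1}s^{k_1}(s^{-i_2}a^{j_2}s^{k_2})^{-1}\), splitting by the sign of \(k_1-k_2\) and by which of \(j_1, j_2\) vanish, and then reading off from that normal form whether the result lies in \(\langle a\rangle\).

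You instead use the affine model. There \(s^{-i}a^js^k \mapsto (jp^{-i}, k-i)\) and, with \(a\) acting on the left as \(x \mapsto x+1\), \(\langle a\rangle(c,m) = \{(c+n,m) \mid n\in\mathbb{Z}\}\). So existence and uniqueness collapse into the single claim that \(T_s\) maps bijectively onto \((\mathbb{Z}[1/p]/\mathbb{Z})\times\mathbb{Z}\).

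To finish your bookkeeping, let \(p^e\) be the reduced denominator of \(c+\mathbb{Z}\). Then:
\begin{itemize}
\item the words \(s^k\) hit exactly the pairs with \(e=0\), \(m\ge 0\);
\item the words \(s^{-i}a^js^k\) with \(i,k>0\), \(p\nmid j\), hit exactly those with \(e\ge 1\), \(m>-e\) (taking \(i=e\), \(k=m+e\));
\item the words \(s^{-i}a^j\) hit exactly those with \(m\le 0\), \(e\le -m\) (taking \(i=-m\)).
\end{itemize}
These sets overlap only in the empty word and together exhaust all pairs. Your second bullet, read literally, overclaims: a class with \(e=i>0\) and \(m\le -i\) is represented by a word \(s^{m}a^{j}\) from the third family, not by the first.

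As for what each route buys: the paper stays at the level of word manipulation and rests on a cited normal form. Yours needs the faithful affine representation of \(\BS(1,p)\), but it is shorter and uniform. In particular, it treats the words \(s^{-i}a^j\) with \(p\mid j\), which you rightly single out, exactly like the others. The paper's case analysis tacitly assumes \(p\nmid j\) whenever \(j\neq 0\), so it needs a small extra check for those words.
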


	\begin{proof}
		We begin by showing that \(T_s\) is a right transversal for \(\langle
		a \rangle\) in the Baumslag-Solitar group \(B_s := \langle a , s \mid sas^{-1}
		= a^p\rangle\). From \cite[Section~2]{BurilloElder}, we have that \(B_s\) admits
		the normal form
		\[
				\{ s^{-i} a^j s^k \mid i, k \in \Z_{> 0}, j \in \Z \setminus p\Z \} 
				\cup \{s^{-i} a^j s^k \mid i, k \in \Z_{\geq 0}, ik = 0, j \in \Z\}.
		\]
		We first show that \(T_s\) contains a right transversal for \(\langle a
		\rangle\); that is, \(B_s = \langle a \rangle T_s\). Let \(s^{-i} a^j
		s^k \) be an arbitrary normal form word in \(B_s\). We can write \(j =
		d p^i + r\) for some \(d \in \Z\) and \(r \in \{0, \ldots, p^i - 1\}\).
		Then \(s^{-i} a^j s^k = s^{-i} a^{d p^i} a^r s^k = a^d s^{-i} a^{r}
		s^k\). In either of the two cases of the normal form for \(B_s\)
		that \(s^{-i} a^j s^k\) lies in, this new word \(s^{-i} a^r s^k\) is
		always in \(T_s\), as required.

		We next show that distinct elements of \(T_s\) lie in distinct right
		cosets of \(\langle a \rangle\), which will complete our proof of the
		fact that \(T_s\) is a right transversal. Let \(s^{-i_1} s^{j_1}
		s^{k_1}, s^{-i_2} a^{j_2} s^{k_2} \in T_s\) be such that \(s^{-i_1}
		a^{j_1} s^{k_1} (s^{-i_2} a^{j_2} s^{k_2})^{-1} \in \langle a
		\rangle\). Then
		\begin{align*}
				s^{-i_1} a^{j_1} s^{k_1} (s^{-i_2} a^{j_2} s^{k_2})^{-1}
				& = s^{-i_1} a^{j_1} s^{k_1} s^{-k_2} a^{-j_2} s^{i_2} \\
				& = s^{-i_1} a^{j_1} s^{k_1 - k_2} a^{-j_2} s^{i_2} \\
				& = \begin{cases}
						s^{-i_1} a^{j_1 - j_2 p^{k_1 - k_2}} s^{i_2 + k_1 - k_2}
						& k_1 - k_2 > 0 \\
						s^{-i_1 + k_1 - k_2} a^{j_1 p^{k_2 - k_1} - j_2}
						s^{i_2} & k_1 - k_2 < 0 \\
						s^{-i_1} a^{j_1 - j_2} s^{i_2} & k_1 - k_2 = 0.
				\end{cases}
		\end{align*}
		We next consider the various cases arising from the types of
		words in \(T_s\); the determining factor for our purposes
		being whether or not \(j_1\) and \(j_2\) equal \(0\).

		Case 1: \(j_1, j_2 > 0\). The definition of \(T_s\) thus implies
		that \(i_1, i_2 > 0\), \(j_1\in \{0, \ldots, p^{i_1} - 1\}
		\setminus p\Z\) and \(j_2 \in \{0, \ldots, p^{i_2} - 1\} \setminus
		p\Z\). Then if \(k_1 - k_2 > 0\), we have that \(i_2 + k_2 - k_2 > 0\),
		so in this case the above word \(s^{-i_1} a^{j_1 - j_2 p^{k_1 - k_2}}
		s^{i_2 + k_1 - k_2}\) is in normal form. As the set of normal forms for
		elements of \(\langle a \rangle\) is just \(\{a^i \mid i \in \Z\}\),
		it follows that \(s^{-i_1}a^{j_1 - j_2 p^{k_1 - k_2}}
		s^{i_2 + k_1 - k_2} \notin \langle a \rangle\). Thus \(s^{-i_1} a^{j_1}
		s^{k_1}\) and \(s^{-i_2} a^{j_2} s^{k_2}\) lie in different right
		cosets of \(\langle a \rangle\).

		Similarly, if \(k_1 - k_2 <
		0\), then the word \(s^{-i_1 + k_1 - k_2} a^{j_1 p^{k_2 - k_1} - j_2}
		s^{i_2}\) is also in normal form. As the normal form words for
		\(\langle a \rangle\) are \(\{a^i \mid i \in \Z\}\), neither of these
		elements lie in \(\langle a \rangle\). It remains to consider the case
		when \(k_1 - k_2 = 0\); that is, the word \(s^{-i_1} a^{j_1 - j_2}
		s^{i_2}\). If \(j_1 - j_2 \notin p \Z\), then \(s^{-i_1} a^{j_1 - j_2}
		s^{i_2}\) is in normal form and thus does not lie in \(\langle a
		\rangle\), so suppose \(j_1 - j_2 \in p\Z\). Then \(j_1 - j_2 = d
		p^\ell\), where \(d \in \Z \setminus p \Z\) and \(\ell \in \Z_{> 0}\).
		Since \(s^{-i_1} a^{j_1 - j_2} s^{i_2} \in \langle a \rangle\), we have
		that \(s^{-i_1} a^{j_1 - j_2} s^{i_2} / \langle \langle a \rangle
		\rangle_{B_s} = 1\). 
This therefore implies that \(i_1 = i_2\).
Next note that since \(j_1, j_2 \in \{ 1, \ldots,
		p^{i_1} - 1\}\), we have that \(-p^{i_1} < j_1 - j_2 < p^{i_1}\). Thus
		\(\ell < i_1\). Next note that \(s^{-i_1} a^{j_1 - j_2} s^{i_1}
		= s^{-i_1} a^{d p^\ell} s^{i_2} = s^{-i_1 + \ell} a^d s^{i_1 - \ell}\).
		This word is now in normal form, but the only case when this
		lies in \(\langle a \rangle\) is when \(d = 0\). This implies
		\(j_1 - j_2 = 0\), and so \(i_1 = i_2\), \(j_1 = j_2\) and
		\(k_1 = k_2\), and we have shown that the two elements of
		\(T_s\) we started with are equal.

		Case 2: \(j_1 = 0\) and \(j_2 \neq 0\). Then the definition of
		\(T_s\) tells us that \(i_1 = 0\) or \(k_1 = 0\). 
		Again, we consider when \(k_1 - k_2\) is positive, negative and zero.
		If \(k_1 - k_2 > 0\), then \(k_1 > 0\), and so \(i_1 = 0\). In this
		case our word is \(s^{-i_1} a^{j_1 - j_2 p^{k_1 - k_2}} s^{i_2 + k_1 - k_2}
		\equiv a^{-j_2 p^{k_1 - k_2}} s^{i_2 + k_1 - k_2}\), which is in normal
		form, and thus lies in \(\langle a \rangle\) if and only if
		\(i_2 + k_1 - k_2 = 0\). But this is impossible, since \(i_2, k_1 - k_2 >0\),
		and so we can conclude that \(k_1 - k_2 \leq 0\).

		If \(k_1 - k_2 < 0\), then our word is
		\(s^{-i_1 + k_1 - k_2} a^{j_1 p^{k_1 - k_2} - j_2} s^{i_2}
		\equiv s^{-i_1 + k_1 - k_2} a^{j_2} s^{i_2}\), which is
		in normal form since \(-i_1 + k_1 - k_2 < 0\) and \(j_2 \notin p \Z\).
		Thus our word lies in \(\langle a \rangle\) if and only if
		\(-i_1 + k_1 - k_2 = i_2 = 0\), which is impossible since
		\(-i_1 \leq 0\) and \(k_1 - k_2 < 0\). It follows that
		\(k_1 - k_2 \geq 0\), and when taken with the previous paragraph,
		we have \(k_1 - k_2 = 0\).

		Since \(k_1 - k_2 = 0\), our word is \(s^{-i_1} a^{j_1 - j_2}
		s^{i_2} \equiv s^{-i_1} a^{-j_2} s^{i_2}\), which is in normal
		form, since \(j_2 \notin p\Z\). This lies in \(\langle a_1
		\rangle\) if and only if \(i_1 = i_2 = 0\), but the definition
		of \(T_s\) tells us that \(i_2 = 0\) implies that \(j_2 = 0\),
		a contradiction. We can conclude that this case does not occur.

		Case 3: \(j_1 \neq 0\) and \(j_2 = 0\). This is analogous to
		Case 2.

		In this case \(s^{-i_1} a^{j_1} s^{k_1} \equiv s^{\ell_1}\)
		and \(s^{-i_2} a^{j_2} s^{k_2} \equiv s^{\ell_2}\) for some
		\(\ell_1, \ell_2 \in \Z\). Then
		\(s^{\ell_1} (s^{\ell_2})^{-1} = s^{\ell_1 - \ell_2}\),
		which is in normal form. It thus lies in \(\langle a \rangle\)
		if and only if \(\ell_1 = \ell_2\); that is, if and only if
		the two words we began with were equal.

		We have thus shown that no two elements of \(T_s\) lie in
		the same right coset of \(\langle a \rangle\), and so
		\(T_s\) is a right transversal for \(\langle a \rangle\)
		in \(B_s\).
		By symmetry, \(T_t\) is a right transversal for
		\(\langle a \rangle\) in \(\langle a, t \mid tat^{-1} = a^q\rangle\).

		Noting that \(G\) is the amalgamated free product of
		\(\langle a, s \mid sas^{-1} = a^p \rangle\) with \(\langle a, t
		\mid tat^{-1} = a^q \rangle\) across the subgroup \(\langle a\rangle\),
		\cite[Theorem 11.3]{BogopolskiBook} tells us that \(G\) has the normal form	
\[
\{a^r x_1 y_1 \cdots x_n y_n \mid r \in \Z, n \in \Z_{\geq 0},
					x_1 \in T_s, x_2, \ldots, x_n \in T_s \setminus \{1\}, 
			y_1, \ldots, y_{n - 1} \in T_t \setminus \{1\}, y_n \in T_t \}.
\]
\end{proof}

One example of a set that can be defined using equations is a centraliser.
	Centralisers have been frequently employed when showing Diophantine
	problems or Diophantine problems with constraints are undecidable, for
	example in \cite{BuchiSenger, CiobanuGarreta, duchin_liang_shapiro, ElliottLevine,  random_nilpotent_eqns, KharlampovichMiasnikov2025}, and they will play a key role in our proof too. 

\begin{lem}
		\label{GBS-cents}
		Let \(p, q \in \mathbb{Z}\) be distinct prime numbers. Define 
		\(G = \langle a, s, t \mid sas^{-1} = a^p, tat^{-1} = a^q \rangle\).
		Then the following hold in \(G\):
		\begin{enumerate}
			\item \(C_G(t) = \langle t \rangle\) and \(C_G(s) = \langle s \rangle\);
					\label{GBS-cent-s-t}
			\item For all \(i \in \Z_{\geq 0}\), \(C_G(st^i) = \langle st^i \rangle\).
					\label{GBS-cent-sti}
		\end{enumerate}
	\end{lem}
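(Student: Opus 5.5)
We will compute these centralisers using the splitting of \(G\) as an HNN-extension, rather than by direct calculation with the normal form of Lemma~\ref{GBS-norm-form}. Write \(B_s = \langle a, s \mid sas^{-1} = a^p\rangle\). Then \(G\) is the HNN-extension of \(B_s\) with stable letter \(t\) and associated cyclic subgroups \(\langle a\rangle\) and \(\langle a^q\rangle\), via \(tat^{-1}=a^q\). Symmetrically, \(G\) is an HNN-extension of \(B_t = \langle a,t \mid tat^{-1}=a^q\rangle\) with stable letter \(s\). We would use the Bass--Serre tree \(\mathcal T\) of the graph-of-groups decomposition: a single vertex with group \(\langle a\rangle\) and two loops labelled \((1,p)\) and \((1,q)\). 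A standard fact we will use repeatedly: if \(g\) acts hyperbolically on a tree, then \(C_G(g)\) preserves the axis of \(g\) and acts on it by translations (and possibly reflections).

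\textbf{Part (1).} The element \(s\) does not fix a vertex, since it is not conjugate into \(\langle a\rangle\) (its exponent sum in \(s\) is \(1\)). So \(s\) is hyperbolic, with translation length \(1\). Let \(h \in C_G(s)\). Then \(h\) preserves the axis \(L\) of \(s\). After multiplying \(h\) by a power of \(s\), we may assume \(h\) fixes a vertex of \(L\), and hence, because \(h\) commutes with \(s\), fixes all of \(L\) pointwise. The pointwise stabiliser of \(L\) is contained in the stabilisers of edges along \(L\), which are conjugates of \(\langle a\rangle\).

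The key computation is the following. If \(a^m\) fixes both the vertex \(v\) and \(s v\), then \(a^m \in \langle a\rangle \cap s\langle a\rangle s^{-1} = \langle a^p\rangle\). Iterating in both directions, and using that \(s^{-1}a^{p^k}s=a^{p^{k-1}}\) while \(s^k a s^{-k}=a^{p^k}\), the element must lie in \(\langle a^{p^k}\rangle\) for all \(k\). Hence the pointwise stabiliser of \(L\) is trivial, so \(h\) is a power of \(s\) and \(C_G(s)=\langle s\rangle\). The case of \(t\) is identical with \(q\) in place of \(p\).

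\textbf{Part (2).} Let \(g = st^i\) with \(i \ge 0\); the case \(i=0\) is part (1). In the tree, \(g\) is hyperbolic, since it has exponent sum \(1\) in \(s\) and so cannot be conjugate into \(\langle a\rangle\). Its axis passes through the base vertex \(v_0\), because \(s t^i\) is cyclically reduced as a word in the HNN normal form and has no pinches. As in part (1), every element of \(C_G(g)\) is, up to a power of \(g\) and possibly a root of \(g\), in the pointwise stabiliser of the axis.

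Computing this stabiliser is now the hard step. An element \(a^m\) fixing the path \(v_0, t v_0, \dots, t^i v_0, st^i v_0\) lies in \(\langle a\rangle \cap t^i\langle a\rangle t^{-i}\) and so on. Each \(t\)-edge forces divisibility by \(q\), and the \(s\)-edge forces divisibility by \(p\). Iterating around \(g^k\) forces divisibility by \(p^kq^{ik}\) for all \(k\), so the pointwise stabiliser is trivial.

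We must also rule out proper roots: an \(h\) with \(h^n = g\) for \(n>1\). We would do this by comparing exponent sums. Abelianising, \(G^{ab}\) contains the \(\Z^2\) generated by the images of \(s,t\), and in it \(g\) maps to \((1,i)\), which is primitive. So \(n=1\), since \(C_G(g)\) acting freely by translations on the axis is infinite cyclic, hence generated by a root of \(g\).

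Reflections on the axis cannot occur, because the edge orientations are preserved by \(G\), so no element inverts the axis. Therefore \(C_G(st^i)=\langle st^i\rangle\). I expect the main obstacle to be checking carefully that the axis stabiliser is trivial, tracking the alternation between the \(p\)- and \(q\)-ends of edges along the axis. The rest is standard Bass--Serre bookkeeping.
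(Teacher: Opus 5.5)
Your proof is correct, and it takes a genuinely different route from the paper. The paper views $G$ as the amalgam $B_s \ast_{\langle a\rangle} B_t$. For part (1) it quotes Barkauskas: since $s\notin\langle\langle a\rangle\rangle_G$, the centraliser of $s$ in the amalgam equals its centraliser in the factor $B_s$, and centralisers in $\BS(1,p)$ are known. For part (2) it compares the normal forms of $g\,st^i$ and $st^i g$ from Lemma~\ref{GBS-norm-form} in a case analysis. You instead use the one-vertex, two-loop Bass--Serre tree and the axis of a hyperbolic element. This treats (1) and (2) uniformly and is self-contained. The only computation needed is $g^k\langle a\rangle g^{-k}=\langle a^{(pq^i)^k}\rangle$ for $k\ge 0$, which makes the pointwise stabiliser of the axis trivial. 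Together with the homomorphism $\expsum_s\colon G\to\Z$ to exclude proper roots, that finishes the proof, so the step you flagged as the main obstacle is in fact immediate. The same argument shows that $C_G(w)$ is infinite cyclic for every nonempty positive word $w$ in $s,t$, and equals $\langle w\rangle$ whenever $(\expsum_s(w),\expsum_t(w))$ is primitive. It also sidesteps the syllable-by-syllable comparison of normal forms, where one must track when adjacent syllables from the same factor merge (for instance $t^i y_1$ when $x_1=1$). The paper's route, in exchange, stays inside the explicit normal form that it needs anyway for Lemma~\ref{GBS-defable}.

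Two small points to fix in the write-up. First, $v_0, tv_0,\dots,t^iv_0, st^iv_0$ is not a path in the tree, since $t^iv_0$ and $st^iv_0$ are at distance $2i+1$. The geodesic from $v_0$ to $gv_0$ is $v_0, sv_0, stv_0,\dots,st^iv_0$, with vertex stabilisers $\langle a\rangle\supseteq\langle a^p\rangle\supseteq\langle a^{pq}\rangle\supseteq\cdots\supseteq\langle a^{pq^i}\rangle$, so your divisibility conclusion stands. Second, preservation of edge orientations does not by itself exclude reflections of a line: a reflection through a vertex at which both edges of the line point inward is not an inversion. Here it happens to work, because every edge on the axis of $st^i$ is oriented in the direction of translation. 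The cleanest justification, however, is that if $r$ reflected the axis then $rgr^{-1}$ would translate it in the opposite direction, so $r\notin C_G(g)$.
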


	\begin{proof}
			We begin by showing \eqref{GBS-cent-s-t} and \eqref{GBS-cent-sti} when \(i = 0\); that is we compute
		\(C_G(s)\) and \(C_G(t)\).  
Since the surjective homomorphism from $G$ to the free group $F_{s,t}$ given by mapping $a$ to $1$ does not map $s$ to $1$ it follows that    
\(s \in
		\langle a, s \rangle \setminus \langle \langle a \rangle \rangle_G\), and similarly  
\(t \in \langle a, t \rangle \setminus \langle \langle a \rangle
		\rangle_G\).
Thus
		\cite[Proposition 2.3]{BarkThesis} tells us that 
\(C_G(s) = C_{\langle a, s \rangle}(s)\) and
		\(C_G(t) = C_{\langle a, t \rangle}(t)\). These subgroups are solvable
		Baumslag-Solitar groups, and so it follows that we can apply
		\cite[Proposition 2.15]{BarkThesis}, noting that \(s a^k = a^{pk} s
		\neq a^k s\) to conclude that \(C_G(s) = C_{\langle a, s \rangle}(s) =
		\langle s \rangle\) and \(C_G(t) = C_{\langle a, t \rangle}(t) =
		\langle t \rangle\).

		For the remainder of \eqref{GBS-cent-sti}, suppose \(i > 0\).
		It is immediate that \(\langle st^i \rangle \subseteq
		C_G(st^i)\).  For the reverse containment, let \(g \coloneqq a^r x_1 y_1 \cdots x_n
		y_n \in C_G(st^i)\) be in normal form.

		Case 1: \(y_n \neq 1\) and \(x_1 \neq 1\). Since \(y_n \neq 1\), 
		\(a^r x_1 y_1 \cdots x_n y_n st^i\) is in normal form with
		\(s \in T_s \setminus \{1\}\) and \(t \in T_t\). In
		addition,
		\begin{align*}
				st^i g & = s t^i \cdot a^r x_1 y_1 \cdots x_n y_n \\ 
			& = a^{r pq^i} st^i x_1 y_1 \cdots x_n y_n. 
		\end{align*}
		Since \(x_1 \neq 1\), this is in normal form with \(s \in T_s\) and
		\(t^i \in T_t \setminus \{1\}\), and so
		\(rpq^i = r\), and so \(r = 0\). In addition,
		\(st^i x_1 y_1 \cdots x_n y_n = x_1 y_1 \cdots x_n y_n st^i\). This
		implies \(x_1 = s, y_1 = t^u, x_k = x_{k + 1}, y_k = y_{k + 1}\)
		for all \(k \in \{1, \ldots, n - 1\}\), \(x_n = s\) and \(y_n = t^i\).
		Thus \(s = x_1 = x_2 = \cdots = x_n\) and \(t^i = y_1 = \cdots 
		= y_n\), and so \(g= (st^i)^n \in
		\langle st^i \rangle\), as required.
			
		Case 2: \(y_n = 1\) and \(x_1 \neq 1\) and \(x_n \neq s^{-1}\). 
		Then \(g st^i\) has normal form
		\(a^r x_1 y_1 \cdots x_n st^i\), with \(x_n s \in T_s \setminus \{1\}\)
		and \(t^i \in T_t\). In addition,
		\begin{align*}
			st^i g & = st^i a^r x_1 y_1 \cdots x_n \\
				   & = a^{r pq^i} st^i x_1 y_1 \cdots x_n.
		\end{align*}
		Since \(x_1 \neq 1\), the above word is in normal form
		with \(s \in T_s\) and \(t^i \in T_t \setminus \{1\}\).
		Then \(t^i = 1\), a contradiction.

		Case 3: \(y_n = 1\), \(x_1 \neq 1\) and \(x_n = s^{-1}\).
		Then \(gst^i = a^r x_1 y_1 \cdots x_{n - 1} y_{n - 1} t^i\),
		which is in normal form, with \(y_{n - 1} t^i \in T_t\). In
		addition,
		\begin{align*}
				st^i g & = st^i a^r x_1 y_1 \cdots x_{n - 1} y_n s^{-1} \\
					   & = a^{rpq^i} st^i x_1 y_1 \cdots x_{n - 1} y_{n -1} s^{-1}.
		\end{align*}
		This is in normal form, with \(s \in T_s\), \(t^i \in T_t \setminus
		\{1\}\) and \(s^{-1} \in T_s\). However, the normal form for \(gst^i\)
		has \(2(n - 1) +1 = 2n - 1\) alternating terms, whilst the normal form
		for \(st^i g\) has \(2(n - 1) + 3 = 2n + 1\), a contradiction.

		Case 4: \(y_n \neq 1\) and \(x_1 = 1\). Then \(g st^i
		= a^r y_1 x_2 \cdots x_n y_n st^i\), which is a normal form word.
		In addition,
		\begin{align*}
			st^i g & = st^i a^r y_1 x_2 \cdots x_n y_n
				    = a^{rpq^i} st^i y_1 x_2 \cdots x_n y_n.
		\end{align*}
		This last word is a normal form word. Since the normal form words
		for \(st^i g\) and \(g st^i\) differ, they are distinct, a contradiction.
	\end{proof}

	We now move on from centralisers to more general equationally definable
	subsets. The latter three sets come
	close to showing that exponent-sum constraints in free monoids are
	`realisable' using the Diophantine problem in \(G\). Whilst we do not
	show that this is formally the case, the extent to which this
	`realisation' does hold is sufficent to allow us to use an analogue of
	the proof of undecidability of the Diophantine problem with exponent-sum
	constraints in a free monoid \cite{BuchiSenger}.

	\begin{lem}
		\label{GBS-defable}
		Let \(p, q \in \mathbb{Z}\) be distinct prime numbers.
		The following sets are equationally definable in
		\(G = \langle a, s, t \mid sas^{-1} = a^p, tat^{-1} = a^q \rangle\):
		\begin{enumerate}
				\item \(\Mon\langle s \rangle\) and \(\Mon\langle t \rangle\);
						\label{GBS-defable-monst}
				\item \(\{(g, t^i) \mid  i \in \Z_{\geq 0}, g \in \Mon\langle st^i
						\rangle\}\); 
						\label{GBS-defable-g-ti}
		\end{enumerate}
		In addition, if \(E\) is equationally definable
				in \(G\) and admits two coordinates that only take values in
				\(\Mon\langle s, t \rangle\), then the following three sets
				are equationally definable in \(G\), where the intersections
				are taken between the two coordinates in \(E\) taking
				values in \(\Mon \langle s, t \rangle\) and the given sets:
		\begin{enumerate}[resume]
				\item \(E \cap \{(u_1, u_2) \mid u_1, u_2 \in \Mon\langle s, t \rangle,
						\expsum_{s}(u_1) = \expsum_t(u_2)\}\); 
						\label{GBS-defable-expsum-s-t}
				\item \(E \cap \{(u_1, u_2) \mid u_1, u_2 \in \Mon\langle s, t \rangle,
						\expsum_{s}(u_1) = \expsum_s(u_2)\}\); 
						\label{GBS-defable-expsum-s-s}
				\item \(E \cap \{(u_1, u_2) \mid u_1, u_2 \in \Mon\langle s, t \rangle,
				\expsum_{t}(u_1) = \expsum_t(u_2)\}\). 
						\label{GBS-defable-expsum-t-t}
		\end{enumerate}
	\end{lem}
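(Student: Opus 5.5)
The plan is to reduce everything to the conjugation action of nonnegative words on $a$. For every $u \in \Mon\langle s, t\rangle$ we have
\[
u a u^{-1} = a^{p^{\expsum_s(u)} q^{\expsum_t(u)}},
\]
and by unique factorisation the exponent determines the pair $(\expsum_s(u), \expsum_t(u))$. The one auxiliary ingredient I need is that $\langle a \rangle$ is equationally definable. I would take $\langle a\rangle = \{Y \mid tYt^{-1} = Y^q\}$. The inclusion $\supseteq$ is clear. For $\subseteq$, I would use the normal form of Lemma~\ref{GBS-norm-form}: write $Y$ in normal form and compare the alternating lengths of $tYt^{-1}$ and $Y^q$, as in the case analysis of Lemma~\ref{GBS-cents}. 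This should force $Y$ to lie in the vertex group $\langle a, t\rangle$. There $tYt^{-1}=Y^q$ forces $Y$ into $\langle a \rangle$, because conjugation by $t$ preserves the $t$-exponent sum while $Y \mapsto Y^q$ multiplies it by $q$. I expect this claim to be the main obstacle. If it fails, I would instead try $\{Y \mid Ya=aY,\ tYt^{-1}=Y^q,\ sYs^{-1}=Y^p\}$, whose extra constraints should make the normal form argument easier.

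For \eqref{GBS-defable-monst}, I would set $\Mon\langle s\rangle = \{X \mid Xs = sX,\ XaX^{-1} \in \langle a\rangle\}$. By Lemma~\ref{GBS-cents}\eqref{GBS-cent-s-t}, $X = s^k$ for some $k \in \Z$. If $k \geq 0$ then $XaX^{-1} = a^{p^k}$. If $k<0$ then $XaX^{-1} = s^{-m}as^{m}$ with $m>0$, which is a word in normal form in $B_s$ and so lies outside $\langle a\rangle$ by the normal form of Lemma~\ref{GBS-norm-form}. The case of $t$ is symmetric. For \eqref{GBS-defable-g-ti}, I would impose $T \in \Mon\langle t\rangle$, $g(sT) = (sT)g$ and $gag^{-1} \in \langle a\rangle$. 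By Lemma~\ref{GBS-cents}\eqref{GBS-cent-sti}, $g = (st^i)^k$. For $k \ge 0$ we get $gag^{-1} = a^{(pq^i)^k}$. For $k<0$, $gag^{-1}$ is $(st^i)^{-m} a (st^i)^m$, which by the normal form is a nontrivial alternating product and so is not in $\langle a\rangle$; this is the same style of normal form check as before.

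For \eqref{GBS-defable-expsum-s-t}--\eqref{GBS-defable-expsum-t-t}, let $U_1, U_2$ be the two coordinates of $E$ ranging over $\Mon\langle s,t\rangle$. I would introduce new variables $S_j \in \Mon\langle s \rangle$ and $T_j \in \Mon\langle t\rangle$, using \eqref{GBS-defable-monst}, and add the equations $U_j a U_j^{-1} = S_jT_j a T_j^{-1}S_j^{-1}$ for $j=1,2$. By the displayed formula and unique factorisation, these force $S_j = s^{\expsum_s(U_j)}$ and $T_j = t^{\expsum_t(U_j)}$. Then \eqref{GBS-defable-expsum-s-s} is obtained by adding $S_1 = S_2$, and \eqref{GBS-defable-expsum-t-t} by adding $T_1 = T_2$.

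For \eqref{GBS-defable-expsum-s-t}, I would use \eqref{GBS-defable-g-ti} with $i=1$, that is, with the second coordinate fixed to $t$, to get a variable $W \in \Mon\langle st\rangle$, so $W = (st)^k$ with $k \ge 0$. I would then add the equation $WaW^{-1} = S_1T_2aT_2^{-1}S_1^{-1}$. This gives $(pq)^k = p^{\expsum_s(U_1)}q^{\expsum_t(U_2)}$, so $\expsum_s(U_1) = k = \expsum_t(U_2)$. Conversely, any tuple satisfying the constraint extends to a solution by taking $k$ to be this common value. Since each step only adds equations and variables to a system defining $E$, the resulting sets are equationally definable.
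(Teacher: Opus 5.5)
There is a genuine gap, and it is exactly the step you flagged. The auxiliary identity $\langle a\rangle=\{Y\mid tYt^{-1}=Y^q\}$ is false. For $Y=t^{-1}at$ we have $tYt^{-1}=a$ and $Y^q=t^{-1}a^qt=a$, so $Y$ is a solution. But $t^{-1}at\in T_t\setminus\{1\}$, so by Lemma~\ref{GBS-norm-form} it is not in $\langle a\rangle$. Likewise every $t^{-k}a^nt^k$ with $k\ge 0$ is a solution. Your sketch breaks at its last step. In $\langle a,t\rangle\cong\BS(1,q)$, comparing $t$-exponent sums only yields $\expsum_t(Y)=0$. The elements of $t$-exponent sum zero form $\bigcup_{k\ge0}t^{-k}\langle a\rangle t^k\cong\Z[1/q]$, on which conjugation by $t$ is multiplication by $q$, so every one of them satisfies the equation. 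Your arguments for \eqref{GBS-defable-monst} and \eqref{GBS-defable-g-ti} discard the case $k<0$ only because $(st^i)^{k}a(st^i)^{-k}\notin\langle a\rangle$. As written they therefore do not go through.

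Your fallback set $\{Y\mid Ya=aY,\ tYt^{-1}=Y^q,\ sYs^{-1}=Y^p\}$ does in fact equal $\langle a\rangle$. A Bass--Serre tree argument shows that the solutions of $tYt^{-1}=Y^q$ are exactly $\bigcup_{k\ge0}t^{-k}\langle a\rangle t^k$, and symmetrically for $s$; these two unions meet only in $\langle a\rangle$. However, this is the real content of the step, and your proposal gives no argument for it.

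Note also that \eqref{GBS-defable-monst} and \eqref{GBS-defable-g-ti} do not need $\langle a\rangle$ itself. They only need a definable set that contains $\langle a\rangle$ and misses $(st^i)^{-m}a(st^i)^m$ for every $m>0$. The paper uses $C_G(t^{-(i+1)}at^{i+1})$, which contains $\langle a\rangle$ because $(t^{-(i+1)}at^{i+1})^{q^{i+1}}=a$. It then excludes the negative powers by a direct normal-form comparison. Your original set would also work, since $(st^i)^{-m}a(st^i)^m$ has a nontrivial $T_s$-syllable in normal form and so lies outside $\langle a,t\rangle$. But this needs your intermediate claim that all solutions of $tYt^{-1}=Y^q$ lie in $\langle a,t\rangle$, and you have not proved that claim either.

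Granted \eqref{GBS-defable-monst} and \eqref{GBS-defable-g-ti}, your treatment of \eqref{GBS-defable-expsum-s-t}--\eqref{GBS-defable-expsum-t-t} is correct. It is a genuinely different route from the paper's. Recording both exponent sums of each $U_j$ through $S_jT_j$ gives exact equalities, and it yields \eqref{GBS-defable-expsum-s-t} with a single $\Mon\langle st\rangle$ variable. By contrast, the paper's one-sided equation $XUa(XU)^{-1}=YaY^{-1}$ with $U\in\Mon\langle t\rangle$ in fact also forces $\expsum_t(u_1)\le\expsum_t(u_2)$.
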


	\begin{proof}
			For \eqref{GBS-defable-monst} and \eqref{GBS-defable-g-ti}, using \cref{GBS-cents}, we will begin by showing
			that the set of solutions to
			\begin{equation}
					\label{eqn:GBS-Monsti}
					(X st^i = st^i X) \wedge (XaX^{-1} t^{-(i + 1)} at^{i + 1} = t^{-(i + 1)}
			at^{i + 1} XaX^{-1})
			\end{equation}
			is precisely \(\Mon\langle st^i \rangle\), for all \(i \in \Z_{\geq
			0}\). The fact that \(\Mon \langle s \rangle\) is equationally
			definable is the special case of this when \(i = 0\).

			Fix \(i \geq 0\), and let \(g \in 
			 C_G(st^i)\); that is, \(g\) is a solution to the first above equation.
			 By \cref{GBS-cents}(2), \(C_G(st^i) = \langle st^i \rangle\), and so
			 \(g = (st^i)^j\) for some \(j \in \Z\). We consider the cases when
			 \(j \geq 0\) and \(j < 0\) separately.

			 If \(j \geq 0\) (that is, \(g \in \Mon\langle st^i \rangle\)),
			 then \(gag^{-1} \in \langle a \rangle\). Further, \(t^{-(i + 1)} a
			 t^{i + 1} \cdot a = t^{-(i + 1)} a \cdot a^{p^{i + 1}} t^{i + 1} =
			 a \cdot t^{-(i + 1)} at^{i + 1}\), and so \(gag^{-1} t^{-(i + 1)}
			 a t^{i + 1} = t^{-(i + 1)} a t^{i + 1} gag^{-1}\). Thus
			 \((st^i)^j\) is a solution to the system Equation~\eqref{eqn:GBS-Monsti}
			 for all \(j \geq 0\).

	Next suppose that \(j <
			0\).  Then
			\begin{align*}
					gag^{-1} t^{-(i + 1)} a t^{i + 1} & = (st^i)^j a (st^i)^{-j} t^{-(i + 1)} a t^{i + 1} \\
										& = (st^i)^{j + 1} t^{-i} s^{-1} a s
										t^i (st^i)^{-j - 1} t^{-(i + 1)} a t^{i + 1}
										\\
										& = 
										\begin{cases}
										t^{-i} s^{-1} a s
										 t^{-1} a t^{i + 1} & j = -1 \\
										(st^i)^{j + 1} t^{-i} s^{-1} a s
										t^i (st^i)^{-j -2} s t^{-1} a t^{i + 1}
										& j < -1,
										\end{cases}
										\\
										t^{-(i + 1)} a t^{i + 1} gag^{-1}
										& = t^{-(i + 1)} a t^{i + 1}
										(st^i)^j a (st^i)^{-j} \\
										& = t^{-(i + 1)} a t^{i + 1}
										(st^i)^{j + 1} t^{-i} s^{-1} a s
								t^i (st^i)^{-j - 1} \\
										& = \begin{cases}
										t^{-(i + 1)} a t s^{-1} a s t^i & j = -1 \\
										t^{-(i + 1)} a t s^{-1}
										(st^i)^{j + 2} t^{-i} s^{-1} a s
								t^i (st^i)^{-j - 1} 
																		& j < -1.
										\end{cases}
			\end{align*}
			We first consider the case that \(j = -1\). We can see that
			\(t^{-i}, t^{-1} a t^{i + 1} \in T_t \setminus \{1\}\) and \(s^{-1}
			a s\in T_s \setminus \{1\}\) (using the definitions of \(T_s\) and
			\(T_t\) from \cref{GBS-norm-form}) and so \(t^{-i} s^{-1} a s
			t^{-1} a t^{i + 1}\) is in normal form. Similarly we have that
			\(t^{-(i + 1)} a t, \; t^i \in T_t \setminus \{1\}\) and
			\(s^{-1} a s \in T_s \setminus \{1\}\), and so
			\(t^{-(i + 1)} a t s^{-1} a st^i\) is also in normal form.
			Thus the group elements these words represent coincide
			if and only if these words are equal as words, which they do not,
			and so this case of \(j = -1\) contributes no solutions to
			the system Equation~\eqref{eqn:GBS-Monsti}.

			Similarly, if \(j < -1\), then \(s, s^{-1} as \in T_s \setminus
			\{1\}\) and \(t^i, t^{-i}, t^{-1} a t^{i + 1} \in T_t \setminus
			\{1\}\), and so the word \((st^i)^{j + 1} t^{-i} s^{-1} a s t^i
			(st^i)^{-j -2} s t^{-1} a t^{i + 1}\) is in the normal form from
			\cref{GBS-cents}. One can see similarly that \(t^{-(i + 1)} a t
			s^{-1} (st^i)^{j + 2} t^{-i} s^{-1} a s t^i (st^i)^{-j - 1}\) is in
			normal form, and so these elements coincide if and only if these
			words are equal. It is immediate that this never happens when \(j <
			0\), and so this case also does not contribute solutions to the
			system Equation~\eqref{eqn:GBS-Monsti}.

We have thus shown that the set of solutions to the equation
			Equation~\eqref{eqn:GBS-Monsti} is \(\{(st^i)^j \mid j \in \Z_{\geq 0}\} =
			\Mon \langle st^i \rangle\), and so \(\Mon\langle st^i \rangle\) is
			equationally definable in \(G\) for all \(i \in \Z_{\geq 0}\). A
			particular case of this is when \(i = 0\), which shows that
			\(\Mon \langle s \rangle\) is equationally definable. By
			symmetry, we obtain that \(\Mon \langle t \rangle\) is also
			equationally definable and we have shown \eqref{GBS-defable-monst}.

			To obtain the more
			general desired set for \eqref{GBS-defable-g-ti}, note that if \(\mathcal E\) is a
			system of equations in \(G\) with a variable \(Y\) admitting the
			set of solutions \(\Mon\langle t \rangle\), then we have shown
			above that the set of solutions to \(\mathcal E \wedge (XaX^{-1}
			t^{-1} Y^{-1} a Y t = t^{-1} Y^{-1} a Y t XaX^{-1}) \wedge (sY X =
			XsY)\) is precisely the desired set stated in \eqref{GBS-defable-g-ti}.

For \eqref{GBS-defable-expsum-s-s} (\eqref{GBS-defable-expsum-t-t}
			is symmetric), consider the set of solutions to \((X, Y)\) in \( X
			U a(XU)^{-1} = Y a (Y)^{-1}\), with the constraints that \((X, Y)
			\in E\) and \(U \in \Mon \langle t \rangle\).  So let \((x, y) \in
			E \subseteq (\Mon \langle s, t \rangle)^2\), and \(i \in \Z_{\geq
			0}\). Then \(xt^i a (xt^i)^{-1} = a^{p^{\expsum_s(x)}q^{i +
			\expsum_t(x)}}\) and \(y a y^{-1} = a^{p^{\expsum_s(y)}
			q^{\expsum_t(y)}}\). Since \(p\) and \(q\) are distinct primes,
			this holds if and only if \(\expsum_s(x) = \expsum_s(y)\) and \(i +
			\expsum_t(s) = \expsum_t(y)\).  Thus the set of \(x\), \(y\) and
			\(i\) for which this is satisfied is
			\[
				\{(x, y, t^i) \in E \times \Mon\langle t^i \rangle \mid
				\expsum_s(x) = \expsum_s(y) \text{ and } \expsum_t(x) =
		\expsum_t(y) + i\},
			\]
			and so this is the set of solutions to \((X, Y, U)\) in this system.
			Thus if we simply consider the solutions to \((X, y)\) in this
			system, we obtain the projection of the above set to the first two
			coordinates, which is:
			\[
				\{(x, y) \in E \times \Mon\langle t^i \rangle \mid
		\expsum_s(x) = \expsum_s(y)\},
			\]
	which is the desired set.
			
	For \eqref{GBS-defable-expsum-s-t}, let \(\mathcal E\) be a system of equations in \(G\) with
			a variable \(U\), such that the set of solutions to \(U\) is \(\Mon
			\langle st \rangle\), which exists by \eqref{GBS-defable-g-ti}.  Let \(X\) and \(Y\) be
			additional variables with the constraints that \(X, Y \in \Mon
			\langle s, t \rangle\). We can then use \eqref{GBS-defable-expsum-s-s} and \eqref{GBS-defable-expsum-t-t} to insist
			that \(\expsum_s(X) = \expsum_s(U)\) and \(\expsum_t(Y) = 
			\expsum_t(U)\). Since \(U \in \Mon\langle st \rangle\),
			\(\expsum_s(U) = \expsum_t(U)\), and so this system of
			equations will have the desired set of solutions to
			\((X, Y)\).
	\end{proof}

	Having completed our setup, we can move on to showing undecidability. As
	with most proofs of undecidability of Diophantine problems in groups, we
	simulate a solution to Hilbert's tenth problem of the natural numbers using
	the Diophantine problem for \(G\). Since the former is undecidable
	\cite{Matijasevic}, this will complete the proof.
	\begin{theorem}
			\label{GBS-thm}
		Let \(p, q \in \mathbb{Z}\) be distinct prime numbers. Then the GBS group
		\[
			G = \langle a, s, t \mid sas^{-1} = a^p, tat^{-1} = a^q \rangle
		\]
		has undecidable Diophantine problem.
	\end{theorem}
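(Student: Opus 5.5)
We reduce Hilbert's tenth problem over $\Z_{\geq 0}$ \cite{Matijasevic} to the Diophantine problem in $G$, following B\"uchi and Senger's argument that word equations with exponent-sum constraints are undecidable \cite{BuchiSenger}. A natural number $n$ will be encoded by the element $t^n \in \Mon\langle t \rangle$, or more generally by any element of $\Mon\langle s,t\rangle$ with $t$-exponent sum $n$. A polynomial system over $\Z_{\geq 0}$ can be rewritten as a finite conjunction of atomic constraints of the forms $x = 1$, $x + y = z$ and $xy = z$. It therefore suffices to express each of these atomic relations by a system of equations in $G$ whose solutions in the encoding variables are exactly the intended tuples. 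Conjunction is handled by concatenating systems with shared variables. Throughout we use that the sets from \cref{GBS-defable} are equationally definable and can be intersected with previously defined sets, in the sense of items \eqref{GBS-defable-expsum-s-t}--\eqref{GBS-defable-expsum-t-t}.

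Constants and addition are straightforward. For each natural-number variable $x$ we introduce a group variable $X$ constrained to $\Mon\langle t \rangle$ by \cref{GBS-defable}\eqref{GBS-defable-monst}, so that $X = t^{x}$. The constraint $x = 1$ is the equation $X = t$. The constraint $x + y = z$ is the equation $XY = Z$, since $t^x t^y = t^{x+y}$.

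Multiplication is the key step, and we encode it with \cref{GBS-defable}\eqref{GBS-defable-g-ti}. Given $X = t^x$, this item defines the pairs $(W, X)$ with $W = (st^x)^m$ for some $m \geq 0$, and $W$ lies in $\Mon\langle s,t\rangle$. Such a $W$ has $\expsum_s(W) = m$ and $\expsum_t(W) = mx$. We then use item \eqref{GBS-defable-expsum-s-t} to force $\expsum_s(W) = \expsum_t(Y)$, where $Y = t^y$; this gives $m = y$. Finally we use item \eqref{GBS-defable-expsum-t-t} to force $\expsum_t(W) = \expsum_t(Z)$, where $Z = t^z$; this gives $z = xy$. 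The variables $W, Y, Z$ all take values in $\Mon\langle s,t\rangle$, which is exactly the hypothesis needed to apply those items. Conversely, any triple with $z = xy$ yields a solution by taking $W = (st^x)^y$.

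The main obstacle is bookkeeping in the iterated use of \cref{GBS-defable}. The items in parts (3)--(5) are stated relative to an already definable set $E$ whose relevant coordinates lie in $\Mon\langle s,t\rangle$. We therefore have to build one global definable set step by step, adding one exponent-sum constraint at a time, and check that each new constraint keeps the earlier coordinates inside $\Mon\langle s,t\rangle$ and does not lose solutions when auxiliary variables are projected away. Once this is verified, the polynomial system has a solution in $\Z_{\geq 0}$ if and only if the constructed system of equations in $G$ has a solution. The construction is effective, so a decision procedure for the Diophantine problem in $G$ would solve Hilbert's tenth problem, which is impossible.
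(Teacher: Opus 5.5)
Your proposal is correct and is essentially the paper's proof. Like the paper, you give a B\"uchi--Senger-style reduction from Hilbert's tenth problem over $\Z_{\geq 0}$, with addition by concatenation and multiplication via the pairs $((st^x)^m, t^x)$ of \cref{GBS-defable}\eqref{GBS-defable-g-ti}, followed by the exponent-sum transfers of items \eqref{GBS-defable-expsum-s-t}--\eqref{GBS-defable-expsum-t-t}. The only difference is that you encode $n$ by $t^n$ rather than $s^n$: this lets $X = t^x$ feed straight into item \eqref{GBS-defable-g-ti}, saves one $\expsum_s$-to-$\expsum_t$ transfer, and makes your final constraint $\expsum_t(W) = \expsum_t(Z)$ exactly right (in the paper's version, where $Z \in \Mon\langle s \rangle$, the last constraint has to be read as $\expsum_s(Z) = \expsum_t(V)$ rather than as membership in $S_t$).
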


	\begin{proof}
		We encode Hilbert's Tenth problem over the semiring of natural numbers
		into \(G\).  Undecidability of the Diophantine problem for \(G\) will
		then follow from the undecidability of Hilbert's tenth problem over the
		semiring of natural numbers. The fact that \(\Mon\langle s \rangle
		\cong \N\) (as a monoid) provides the copy of the natural numbers in
		which we simulate Hilbert's tenth problem.  \cref{GBS-defable}\eqref{GBS-defable-monst} tells us
		that this submonoid is equationally definable. To encode addition, note
		that the equation \(XY =Z\) in \(G\) with the (equationally definable,
		by \cref{GBS-defable}\eqref{GBS-defable-monst}) constraints \(X, Y, Z \in \langle s \rangle\)
		has set of solutions \(\{(s^i, s^j, s^{i + j}) \mid i, j \in \N\}\).

		We now encode multiplication into the Diophantine problem of \(G\);
		that is, we show that \(\{(s^i, s^j, s^{ij}) \mid i, j \in \N\}\) is
		equationally definable in \(G\). First note that
		\cref{GBS-defable}\eqref{GBS-defable-g-ti} tells us that \(\{(t^i,
		(st^i)^j) \mid i, j \in \N\}\) is equationally definable. Let
		\(S_{st}\), \(S_s\) and \(S_t\) denote the three sets from parts
		\cref{GBS-defable}\eqref{GBS-defable-expsum-s-t},
		\eqref{GBS-defable-expsum-s-s} and \eqref{GBS-defable-expsum-t-t} that
		form equationally definable subsets of \(G\) when intersected with
		equationally definable subsets of \(G\) that lie in \(\Mon\langle s, t
		\rangle\). 

		We then add further equations
		to obtain the following system, noting that whenever the constraints
		\(S_{s}\), \(S_t\) and \(S_{st}\) are applied, the two variables
		they are applied to have already been forced to lie in
\(\Mon \langle s, t \rangle\) by previous equations.
		\[
			(U, V) \in \{(t^i, (st^i)^j) \mid i, j \in \N\} \wedge
			X, Y, Z \in \Mon\langle s \rangle \wedge
			((X, U) \in S_{st}) \wedge
			((Y, V) \in S_s) \wedge
			((Z, V) \in S_t).
		\]
		By construction, the set of
		solutions to \((U, V, X, Y, Z)\) is
		\[
			\{(t^i, (st^i)^j, s^i, s^j, s^{ij}) \mid i, j \in \N\}
		\]
		which shows that \(\{(s^i, s^j, s^{ij}) \mid i, j \in \N\}\) is
		equationally definable in \(G\). When combined with the fact that
		addition is also equationally definable in \(G\), this gives that
		Hilbert's tenth problem over the semiring of natural numbers can be
		encoded in the Diophantine problem of \(G\). Since the former is
		undecidable (\cite{Matijasevic}; see also
		\cite[Corollary 2.14]{ElliottLevine} for the exact format of
		\cite{Matijasevic} we use), the undecidability of the latter follows.
	\end{proof}

	Since \(G\) can be realised as an HNN-extension of a solvable
	Baumslag-Solitar group over \(\Z\), which is an HNN-extension of \(\Z\)
	over \(\Z\), we can thus conclude that HNN-extensions over \(\Z\)
	do not appear amongst the properties which preserve the decidability
	of the Diophantine problem. The class of groups with decidable Diophantine
	problem is known to be preserved under HNN-extensions and free products
	with amalgamation over a finite group \cite{LohreySenizergues}.

	\begin{cor}
		The class of groups which have a decidable Diophantine Problem is not
		closed under HNN-extensions over \(\Z\).
    \end{cor}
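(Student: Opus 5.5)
The corollary follows by exhibiting a single counterexample: a group with decidable Diophantine problem whose HNN-extension over $\Z$ has undecidable Diophantine problem. I would take $G = \langle a, s, t \mid sas^{-1} = a^p, tat^{-1} = a^q \rangle$ with $p, q$ distinct primes, which by \cref{GBS-thm} has undecidable Diophantine problem. The plan is to show that $G$ is an HNN-extension over $\Z$ of $B = \BS(1,p) = \langle a, s \mid sas^{-1} = a^p \rangle$, and then that $B$ has decidable Diophantine problem.

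\textbf{The HNN structure.} Take $B$ as the base group. The associated subgroups are $\langle a \rangle$ and $\langle a^q \rangle$, which are both infinite cyclic in $B$; this follows, for example, from the normal form for $B$ used in the proof of \cref{GBS-norm-form}. The stable letter is $t$, with isomorphism $a \mapsto a^q$. The standard HNN presentation is then exactly the given presentation of $G$. So $G$ is an HNN-extension of $B$ over $\Z$.

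\textbf{Decidability for the base group.} Here $B = \BS(1,p) \cong \Z[1/p] \rtimes \Z$. I would not try to reprove this, but cite the literature: the Diophantine problem in $\BS(1,n)$ is known to be decidable. The reference I would check first is Kharlampovich--López--Myasnikov, which the introduction already cites for quadratic equations in $\BS(1,n)$; I would verify that this, or a later paper, covers arbitrary systems of equations and not only quadratic ones.

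\textbf{Main obstacle and fallback.} The main obstacle is this base case. The introduction only cites decidability for quadratic equations in $\BS(1,n)$, so a citation for arbitrary systems may not be available. In that case I would use a base group that is certainly decidable, namely $\Z = \langle a \rangle$, since abelian groups have decidable Diophantine problem by linear algebra over $\Z$. Two HNN-extensions over $\Z$ give $G$: first $\BS(1,p)$ over $\Z$, then $G$ over $\BS(1,p)$. So the class of groups with decidable Diophantine problem is not closed under repeated HNN-extensions over $\Z$. Hence if it were closed under a single such extension, it would be closed under two, and the class would contain $G$, contradicting \cref{GBS-thm}. The only remaining point is that $\BS(1,p)$ must itself have decidable Diophantine problem for this argument to close, so the argument ultimately rests on that citation.
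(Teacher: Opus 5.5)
\textbf{Verdict.} Your fallback paragraph already contains a complete proof, and it is exactly the paper's argument. The mistake is your final sentence, where you claim the fallback still needs \(\BS(1,p)\) to have decidable Diophantine problem. It does not.

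\textbf{Why the fallback is complete.} Let \(\mathcal{D}\) be the class of groups with decidable Diophantine problem, and suppose \(\mathcal{D}\) were closed under HNN-extensions over \(\Z\).
\begin{enumerate}
\item \(\Z \in \mathcal{D}\), and \(\BS(1,p)\) is an HNN-extension of \(\Z\) over \(\Z\). So closure itself puts \(\BS(1,p)\) in \(\mathcal{D}\); no citation is involved.
\item A second application of closure then puts \(G\) in \(\mathcal{D}\), contradicting \cref{GBS-thm}.
\end{enumerate}
Put as a dichotomy: if \(\BS(1,p)\) has decidable Diophantine problem, then \(G\) is a one-step counterexample over the base \(\BS(1,p)\). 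If it does not, then \(\BS(1,p)\) is itself a one-step counterexample over the base \(\Z\). Either way closure fails, and you never need to know which case holds. Your two HNN decompositions are correct: associated subgroups \(\langle a\rangle, \langle a^p\rangle\) of \(\Z\), then \(\langle a\rangle, \langle a^q\rangle\) of \(\BS(1,p)\), all infinite cyclic.

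\textbf{Why the primary route fails.} It cannot be completed with current knowledge. Decidability of the Diophantine problem for arbitrary systems of equations in \(\BS(1,n)\) is open; the paper states this explicitly. The Kharlampovich--L\'opez--Myasnikov result, in its corrected form, covers only quadratic equations. You should drop the citation attempt and the closing caveat, and present the fallback argument as the proof.
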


	\begin{proof}
		In light of \cref{GBS-thm}, it suffices to show that \(G = \langle a, s, t \mid sas^{-1} = a^p, tat^{-1} = a^q \rangle\) can be built up from \(\Z\) (which has a decidable
		Diophantine problem) using HNN-extensions over \(\Z\). This can be seen by
		first noting that \(\BS(1, p) = \langle a, s, \mid sas^{-1} = a^p \rangle\) is
		an HNN-extension of \(\Z\) over \(\Z\), and \(G\) is an HNN-extension of
		\(\BS(1, p)\) over \(\Z\).
	\end{proof}

It is interesting to contrast the results in this section with related results on the conjugacy problem for HNN extensions and amalgamated free products. For example in \cite[Theorem 5.10]{WeissThesis} it is proved that if $G$ is a finite graph of groups and such that all vertex groups are finitely generated free or free abelian and all edge groups cyclic then $G$ has decidable conjugacy problem. The GBS group constructed in this section shows that the same result does not hold when the conjugacy problem is replaced by the Diophantine problem.  

	We do not show that the class of groups which have a decidable Diophantine problem
	is not closed under free products amalgamted over \(\Z\). If solvable Baumslag-Solitar
	groups were to have a decidable Diophantine problem, this would provide a
	counter-example, although perhaps there is a more straightforward way of proving this.

	\begin{qu}
		Is the class of groups with a decidable Diophantine problem closed under
		free products amalgmated over \(\Z\)?
	\end{qu}

We next show how by combining our result \cref{GBS-thm} with a result of Whyte \cite{Whyte01} we can show that various Baumslag-Solitar groups are quasi-isometric to a group with undecidable Diophantine problem. 

\begin{cor}
There is a group \(G\) that is quasi-isometric to all Baumslag-Solitar groups \(\BS(m, n)\), where \(1 < m < n\), such that \(G\) has an undecidable Diophantine problem.
	\end{cor}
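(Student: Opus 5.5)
Take \(G = \langle a, s, t \mid sas^{-1} = a^p, tat^{-1} = a^q \rangle\) with, say, \(p = 2\) and \(q = 3\). By \cref{GBS-thm} this group has undecidable Diophantine problem, so it only remains to show that \(G\) is quasi-isometric to every \(\BS(m,n)\) with \(1 < m < n\). For this I will use Whyte's classification of generalised Baumslag--Solitar groups up to quasi-isometry \cite{Whyte01}. Whyte shows that a GBS group given by a finite graph of infinite cyclic groups is quasi-isometric to one of \(\Z^2\), \(\BS(1,n)\) for some \(n \geq 2\), \(F_n \times \Z\), or \(\BS(2,3)\). Moreover, all non-unimodular GBS groups that are not (ascending-HNN) solvable Baumslag--Solitar groups lie in the single class of \(\BS(2,3)\), and every \(\BS(m,n)\) with \(1 < m < n\) belongs to this class.

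The key step is to check that \(G\) satisfies the hypotheses placing it in the \(\BS(2,3)\) class. First, \(G\) is the fundamental group of a graph of groups with one vertex group \(\langle a \rangle \cong \Z\) and two loops, with edge monomorphisms \(a \mapsto a\) and \(a \mapsto a^p\) (respectively \(a^q\)). The modular homomorphism \(G \to \mathbb{Q}^\times\) sends \(s \mapsto p\) and \(t \mapsto q\), which is nontrivial, so \(G\) is not unimodular. This rules out the \(\Z^2\) and \(F_n \times \Z\) classes, which consist exactly of the unimodular GBS groups (those virtually \(F_n \times \Z\)). Second, \(G\) is not solvable: it surjects onto the free group \(F(s,t)\) by killing \(a\), as already used in the proof of \cref{GBS-cents}. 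So \(G\) is not of the form \(\BS(1,n)\), and by Farb--Mosher rigidity for solvable Baumslag--Solitar groups it is not quasi-isometric to any of them either. Alternatively, one can observe that its Bass--Serre tree is not a line-like ascending tree, since the graph of groups is reduced with two edges.

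The main obstacle is pinning down the exact form of Whyte's statement being invoked. One must verify that \(G\) falls under the hypothesis of his theorem that a GBS group is quasi-isometric to \(\BS(2,3)\). As I recall it, the condition is that the group is not virtually \(F_n\times\Z\) and is not \(\BS(1,n)\), or equivalently that the graph of groups is not unimodular and not an ascending loop. The two facts above cover precisely this. One should also note that \(G\) is finitely generated, which is evident. Finally, combining Whyte's result that \(\BS(m,n)\) is quasi-isometric to \(\BS(2,3)\) for all \(1<m<n\) with transitivity of quasi-isometry gives the corollary.
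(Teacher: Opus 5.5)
Your proof is correct, and its skeleton matches the paper's. You use \cref{GBS-thm} for undecidability, then Whyte's trichotomy \cite[Theorem 0.1]{Whyte01} to place $G$ in the quasi-isometry class of $\BS(2,3)$ once the two exceptional alternatives are excluded, then \cite[Corollary 0.2]{Whyte01} with transitivity.

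The difference is in how you exclude those alternatives. The paper excludes a finite-index subgroup $F_n \times \Z$ indirectly: such a group would have decidable Diophantine problem by \cite[Theorem 3.1]{CiobanuHoltRees}, contradicting \cref{GBS-thm}. You use the modular homomorphism instead. This is intrinsic, and it keeps a purely geometric claim independent of both the main theorem and a deep decidability result. You only need the easy direction (virtually $F_n \times \Z$ implies unimodular), and it is worth proving it rather than citing an ``exactly''. Suppose $F \times \Z$ has finite index, and pick $j,k \geq 1$ with $s^j, a^k$ in it. Then $s^j a^k s^{-j} = a^{kp^j}$ forces the $\Z$-coordinate $z$ of $a^k$ to satisfy $z = p^j z$. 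It also forces the $F$-coordinate of $a^k$ to be conjugate to its own $p^j$-th power, which is impossible in a free group unless it is trivial (compare cyclically reduced lengths). So $a^k = 1$, a contradiction.

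For the $\BS(1,n)$ alternative, the paper uses $\Z^2 \leq G/[G,G]$, plus non-commutativity to rule out $\BS(1,1)$. You use non-solvability via $G \twoheadrightarrow F(s,t)$, which handles every $n$ at once; both are elementary. Since Whyte's second alternative is an isomorphism condition, your Farb--Mosher and Bass--Serre tree remarks are not needed.
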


	\begin{proof}
		By \cref{GBS-thm}, it suffices to show that \(G = \langle a, s, t \mid
		sas^{-1} = a^p, tat^{-1} = a^q \rangle\) is quasi-isometric to \(\BS(m,
		n)\) for all \(1 < m < n\). We first note that as \(G\) has an
		undecidable Diophantine problem (\cref{GBS-thm}), it cannot contain a
		finite index subgroup of the form \(F_n \times \Z\), where \(F_n\) is a
		finite rank free group, as this contradicts
		\cite[Theorem 3.1]{CiobanuHoltRees}. Furthermore, \(\Z^2 \leq G / [G, G]\),
		and so \(G\) is not isomorphic to \(\BS(1, n)\) for any \(n \in \Z \setminus \{0, 1\}\),
		and \(G\) is not abelian, and thus is not isomorphic to \(\BS(1, 1)\). We
		can now apply \cite[Theorem 0.1]{Whyte01} to conclude that \(G\) is
		quasi-isometric to \(\BS(2, 3)\), and \cite[Corollary 0.2]{Whyte01} to
		conclude that \(\BS(m, n)\) is quasi-isometric to \(\BS(2, 3)\) for
		all \(1 < m < n\), as required.
	\end{proof}

Another consequence of what we have proved is the following. 

\begin{cor} 
At least one of the following statements is true: 
\begin{itemize} 
\item 
The property of having a decidable Diophantine problem is not a quasi-isometry invariant of finitely presented groups; 
\item All Baumslag-Solitar groups \(\BS(m, n)\), where \(1 < m < n\) have undecidable Diophantine problems. 
  \end{itemize} 
  \end{cor}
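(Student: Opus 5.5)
This follows by a short case analysis that combines \cref{GBS-thm} with the preceding corollary. We will show that if the first statement fails, the second must hold. So suppose that having a decidable Diophantine problem \emph{is} a quasi-isometry invariant of finitely presented groups. We then aim to show that every \(\BS(m, n)\) with \(1 < m < n\) has an undecidable Diophantine problem.

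We take \(G = \langle a, s, t \mid sas^{-1} = a^p, tat^{-1} = a^q \rangle\) for a fixed pair of distinct primes, say \(p = 2\) and \(q = 3\). This \(G\) is finitely presented, with three generators and two relations. By \cref{GBS-thm}, \(G\) has an undecidable Diophantine problem. By the preceding corollary, more precisely its proof, which invokes \cite[Theorem 0.1, Corollary 0.2]{Whyte01}, \(G\) is quasi-isometric to \(\BS(m, n)\) for every \(1 < m < n\).

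Now fix such \(m, n\). The Baumslag--Solitar group \(\BS(m, n) = \langle a, b \mid b a^m b^{-1} = a^n \rangle\) is finitely presented. Suppose it had a decidable Diophantine problem. Under our invariance assumption, applied to the quasi-isometric pair \(\BS(m, n)\) and \(G\), the group \(G\) would then also have a decidable Diophantine problem, which contradicts \cref{GBS-thm}. Hence \(\BS(m, n)\) has an undecidable Diophantine problem, and since \(m, n\) were arbitrary, the second statement holds. This proves the disjunction.

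There is no real obstacle here; the substantive work is already contained in \cref{GBS-thm} and in the quasi-isometry corollary. The one point that needs care is that the invariance hypothesis is stated only for finitely presented groups, so we must confirm that both groups in the quasi-isometric pair are finitely presented. We also note that the corollary's hypotheses (\(G\) is not virtually \(F_n \times \Z\), and \(G\) is not a solvable Baumslag--Solitar group) were checked for \(G\) itself, so they apply here unchanged.
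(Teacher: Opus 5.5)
Your proposal is correct and follows the paper's reasoning. The paper gives no separate proof and treats this as an immediate consequence of \cref{GBS-thm} together with the preceding quasi-isometry corollary, both groups being finitely presented, which is exactly the argument you give.
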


We suspect that the class of finitely presented groups which have a decidable Diophantine problem is not closed under quasi-isometry. 
Decidability of the conjugacy problem is known not to be a quasi-isometry invariant of finitely presented groups. In fact, it is known that decidability of the conjugacy problem is neither preserved by passing finite index subgroups nor extensions of index two; see \cite{MillerBook}. It is currently an open question whether the property of having decidable Diophantine problem is preserved by finite index subgroups or extensions.   

In contrast, it is well-known that for finitely presented groups having a decidable word problem is a quasi-isometry invariant; see \cite{delaHarpeBook}. For finitely generated but not finitely presented groups this is no longer true; e.g. the first Grigorchuk group is quasi-isometric to a group with undecidable word problem; see \cite{mathoverflow}. 

There are other interesting algorithmic questions that remain open for GBS groups e.g. the open isomorphism problem; see  \cite{AscariCasalsRuizKazachkov} for recent work on this problem. 
In related work, in the paper \cite[Lemma 3.2]{CasalsRuizetal21} they prove the following. Let $n \geq m \geq 1$ and $\mathrm{gcd}(m,n)=d$, $m=dp$, $n = dq$ such that $\mathrm{gcd}(p,q)=1$. Then the Baumslag--Solitar group $BS(m,n)$ contains a finite index subgroup isomorphic to the group
\[G_{p,q}^d = \langle a, t_1, \ldots, t_d \mid t_i^{-1} a^p t_i = a^q, i = 1, \ldots, d \rangle. \]
Given that decidability of the Diophantine problem remains open for the groups $BS(m,n)$ it would be interesting to study the same question for the groups $G_{p,q}^d$.   

\section{A one-relator product of cyclic groups}
In this section we consider Diophantine problems for one-relator products of cyclic groups. More background on one-relator products of groups may be found e.g. in the introduction to the paper \cite{DuncanArye}. In general both the word problem and conjugacy problems remain open for one-relator products of cyclic groups, although they have been shown to be decidable in certain cases. For example, it is known that if $A$ and $B$ are locally indicable groups then $Q = A \ast B/ \langle \langle r \rangle \rangle$ has a decidable word problem in the case that $r = s^m$, where $m > 1$. Also, if $A$ and $B$ are arbitrary (not necessarily locally indicable) groups, and $r$ is a sixth or higher power, then $Q$ will have a decidable word problem; see 
\cite{Brodskii84, Howie81, HowieFourthPowers, ShortThesis}.

As we explain in the introduction to \cite{GrayLevineArxiv251207690}, a combination of \cite[Corollary 4.29]{Garreta2020} together with the recent breakthrough results \cite{Levent2026, koymans2024hilbert}, proves that  that if $G$ is a finitely generated virtually nilpotent group that is not virtually abelian then $G$ has an undecidable Diophantine problem. Since every virtually Heisenberg group is a finitely generated virtually nilpotent group that is not virtually abelian it follows that every virtually Heisenberg group has an undecidable Diophantine problem.  

In this section we shall describe a one-relator product of \(\Z\) and \(C_2\) with undecidable Diophantine problem. We will prove that the group in question is virtually the Heisenberg group, and then by the comments in the previous paragraph it will follow that the group has an undecidable Diophantine problem. However, once we have shown that it is indeed virtually the Heisenberg group, the remainder of the direct proof is very short, and shows that it is reducible to Hilbert's tenth problem over the ring of integers, so we include the proof. 

We start by recalling that the Heisenberg group is cyclically presented.

\begin{lem}
  \label{lem:Heisenberg-pres}
  The group defined by the presentation \(\langle x, y \mid [x, yxy^{-1}],
  [y, xyx^{-1}]\rangle\) is isomorphic to the Heisenberg group.
\end{lem}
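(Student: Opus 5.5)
I will prove this with Tietze transformations, comparing against the standard presentation
\[
  H = \langle x, y, z \mid z = [x,y],\ [x,z] = 1,\ [y,z] = 1 \rangle
\]
of the Heisenberg group. The plan is to show that the two relators of $P = \langle x, y \mid [x, yxy^{-1}], [y, xyx^{-1}]\rangle$ hold in $H$, and that conversely the relations $[x,[x,y]]=1$ and $[y,[x,y]]=1$ are consequences of the relators of $P$. If both directions hold, the identity map on $\{x,y\}$ induces mutually inverse homomorphisms.

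\textbf{From $P$ to $H$.} I work in $H$, where $[x,y]$ is central. Writing $yxy^{-1} = x c$ with $c = x^{-1}yxy^{-1}$, the element $c$ is a commutator of $x$ and $y$ (up to the sign convention), so it is central. Hence $x$ commutes with $xc$, that is, $[x, yxy^{-1}] = 1$. Symmetrically, $xyx^{-1} = y c'$ with $c'$ central, so $[y, xyx^{-1}] = 1$. Thus the identity on generators induces a surjection $P \to H$.

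\textbf{From $H$ to $P$.} This is the main step. In $P$, the first relator says that $x$ commutes with $yxy^{-1}$. It then commutes with $x^{-1}\cdot yxy^{-1} = x^{-1}yxy^{-1}$, which is the commutator $[x^{-1},y]$ in the convention $[g,h]=g h g^{-1}h^{-1}$. Similarly, the second relator says $y$ commutes with $xyx^{-1}$, and hence with $xyx^{-1}y^{-1} = [x,y]$.

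The hard part is reconciling these, since the two statements involve $[x^{-1},y]$ and $[x,y]$ rather than a single commutator. I plan to conjugate $x$ commuting with $x^{-1}yxy^{-1}$ by $x$. This gives that $x$ commutes with $yxy^{-1}x^{-1} = [y,x] = [x,y]^{-1}$, so $x$ commutes with $[x,y]$. Together with $y$ commuting with $[x,y]$ from the second relator, the element $z := [x,y]$ is central in $P$. So all relations of $H$ hold in $P$, and we get a surjection $H \to P$ which is the identity on $x,y$.

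The two surjections are mutually inverse on generators, so they are isomorphisms. The only real care needed is fixing one commutator convention and tracking it consistently through each step.
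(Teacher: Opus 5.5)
Your proof is correct and is essentially the paper's argument. Both rest on the observation that $x$ commutes with $yxy^{-1}$ if and only if it commutes with $yxy^{-1}x^{-1}=[y,x]$, and symmetrically for $y$ and $xyx^{-1}y^{-1}=[x,y]$. The paper phrases this as a relator-by-relator equivalence with $\langle a,b\mid [a,[a,b]],[b,[a,b]]\rangle$, whereas you phrase it as a pair of mutually inverse homomorphisms.

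The only loose spot is why $c=x^{-1}yxy^{-1}$ is central in $H$: it equals $[x^{-1},y]=x^{-1}[y,x]x$, not literally a commutator of $x$ and $y$ up to sign. It is nonetheless central, because $H/\langle z\rangle$ is abelian, or because it is a conjugate of the central element $[y,x]$, so the step stands.
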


\begin{proof}
  Recall that a standard presentation for the Heisenberg group is
  \(\langle a, b \mid [a, [a, b]], [b, [a, b]] \rangle\).
  We have \([a, [a, b]] = a aba^{-1} b^{-1} a^{-1} bab^{-1}a^{-1} = a [a,
  ba^{-1} b^{-1}] a^{-1}\), which is equivalent to the relation \([a, ba^{-1}
  b^{-1}]\). This states that \(a\) commutes with \(ba^{-1}b^{-1}\), which
  is equivalent to the relation \(a\) commutes with \((ba^{-1}b^{-1})^{-1} =
  bab^{-1}\); that is \([a, bab^{-1}]\).
  By symmetry, the relation \([b, [a, b]]\) is equivalent to \([b,
  aba^{-1}]\). Thus the Heisenberg group is isomorphic to \(\langle a, b
  \mid [a, bab^{-1}], [b, aba^{-1}] \rangle\), which has the desired form.
\end{proof}

We next define our one-relator product, and show that it is indeed virtually
	the Heisenberg group. 

\begin{lem}
  \label{lem:Heisenberg-subgp}
  The group
  \[
    K_2 = \langle x, t \mid [x, txt^{-1} x tx^{-1} t^{-1}], t^2 \rangle
  \]
  has a subgroup \(\langle x, txt^{-1} \rangle\) which is isomorphic to
  the Heisenberg group and transversal \(\{1, t\}\).
\end{lem}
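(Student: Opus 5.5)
The plan is Reidemeister--Schreier for the index-two subgroup given by killing $t$, followed by a recognition of the resulting presentation via \cref{lem:Heisenberg-pres}.

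First, map $K_2$ onto $C_2 = \langle t \mid t^2\rangle$ by sending $x \mapsto 1$ and $t \mapsto t$. This is well defined: the relator $[x, txt^{-1}xtx^{-1}t^{-1}]$ has exponent sum $0$ in $t$, and $t^2$ maps to $1$. Let $N$ be the kernel. Then $N$ has index $2$, and $\{1,t\}$ is a right (and left) transversal, since $t \notin N$. This gives the transversal claim once we show $N = \langle x, txt^{-1}\rangle$.

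Next, apply Reidemeister--Schreier with Schreier transversal $\{1,t\}$. The Schreier generators are
\begin{itemize}
\item $x_0 = x$ and $x_1 = txt^{-1}$, coming from $x$;
\item $t\cdot 1$ and $t\cdot t \cdot 1^{-1} = t^2$, coming from $t$.
\end{itemize}
The first of the $t$-generators is trivial by the Schreier convention, so only $t^2$ survives. Each relator is then rewritten conjugated by $1$ and by $t$.

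\begin{itemize}
\item \emph{The relator $t^2$.} Its two conjugates give $t^2 = 1$ twice, which kills the generator $t^2$.
\item \emph{The relator $r = [x, w]$, where $w = txt^{-1} \cdot x \cdot tx^{-1}t^{-1}$.} Rewriting gives $w = x_1 x_0 x_1^{-1}$, so $r$ becomes $[x_0, x_1x_0x_1^{-1}]$.
\item \emph{The conjugate $trt^{-1}$.} Here $txt^{-1} = x_1$, and
\[ twt^{-1} = t^2 x t^{-2} \cdot txt^{-1} \cdot t^2 x^{-1} t^{-2}. \]
Using $t^2=1$ in the rewritten presentation, this becomes $x_0 x_1 x_0^{-1}$. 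So $trt^{-1}$ becomes $[x_1, x_0x_1x_0^{-1}]$.
\end{itemize}

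Hence
\[ N \cong \langle x_0, x_1 \mid [x_0, x_1x_0x_1^{-1}],\ [x_1, x_0x_1x_0^{-1}]\rangle, \]
which is the Heisenberg group by \cref{lem:Heisenberg-pres}. The generators $x_0, x_1$ are exactly $x$ and $txt^{-1}$, so $N = \langle x, txt^{-1}\rangle$ as a subgroup of $K_2$.

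The main obstacle is the bookkeeping in the Reidemeister--Schreier rewrite, especially the conjugate by $t$. One has to track cosets carefully so that the $t^2$ Schreier generators are correctly identified and then killed; a sign or coset slip would produce a wrong presentation. As a sanity check, I would verify the two relations directly in $K_2$. The relation $trt^{-1} = 1$ is immediate from conjugating $r$ by $t$ and using $t^2=1$.
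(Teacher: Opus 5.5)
Your proof is correct, but it takes a different route from the paper. The paper does not run Reidemeister--Schreier. It starts from the Heisenberg group $H=\langle x,y \mid [x,yxy^{-1}],[y,xyx^{-1}]\rangle$ of \cref{lem:Heisenberg-pres} and forms the split extension $H\rtimes_\phi C_2$, where $\phi$ is the order-two automorphism swapping $x$ and $y$. It then identifies this extension with $K_2$ by Tietze transformations: eliminate $y=txt^{-1}$, and observe that the relations $[txt^{-1},x(txt^{-1})x^{-1}]$ and $t(txt^{-1})t^{-1}=x$ are consequences of the other two. The embedding of $H$, the index two, and the transversal $\{1,t\}$ then come for free from the semidirect product structure. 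Your approach is more mechanical: you never need to guess the automorphism $\phi$ in advance, because the presentation of the kernel falls out of the rewriting. The cost is the coset bookkeeping, which you handled correctly, including the Schreier generator $z=t^2$ that appears in the $t$-conjugate of the relator and is killed by the relator $t^2$. The paper's route makes the decomposition $K_2=\langle x,txt^{-1}\rangle\rtimes\langle t\rangle$ explicit, with conjugation by $t$ swapping $x$ and $txt^{-1}$. That is exactly what the subsequent theorem uses: it writes elements as $gt^\delta$ and uses that $t$ inverts $[x,txt^{-1}]$. You can recover the same structure from your set-up, since $t\notin N$, $t^2=1$ and $t(txt^{-1})t^{-1}=x$.
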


\begin{proof}
Let $H = \langle x, y \mid [x, yxy^{-1}], [y, xyx^{-1}]\rangle$.  
Let $\phi$ be the automorphism of $H$ of order \(2\) that maps $x$ to $y$ and maps $y$ to $x$. Form the semidirect product of $H$ by $C_2$ with respect to this automorphism obtaining 
\[
H  \rtimes_{\phi} C_2 = 
\langle 
x, y, t \mid [x, yxy^{-1}]=1, \; [y, xyx^{-1}]=1, \; t^2=1, \; txt^{-1} = y, \; tyt^{-1}=x
\rangle.
\]         
We claim that $K_2 \cong H  \rtimes_{\phi} C_2 $. This can be proved by performing Tietze transformations to the presentation of the latter, first eliminating $y$ to obtain    
\[
H  \rtimes_{\phi} C_2 \cong 
\langle 
x, t \mid [x, (txt^{-1} )x(txt^{-1} )^{-1}]=1, \; [txt^{-1} , x(txt^{-1} )x^{-1}]=1, \; t^2=1, \; t(txt^{-1} )t^{-1}=x
\rangle
\] 
then observing that the second and fourth relations in this presentation are consequences of the other two and thus   
\[
H  \rtimes_{\phi} C_2 \cong  
\langle 
x, t \mid [x, (txt^{-1} )x(txt^{-1} )^{-1}]=1, \; t^2=1
\rangle = K_2.   
\] 
This shows that $H  \rtimes_{\phi} C_2 \cong K_2$ via the isomorphism $\theta$ that maps  $x \mapsto x$, $t \mapsto t$ and $y \mapsto txt^{-1}$. Since $H$ naturally embeds in the semidirect product $H  \rtimes_{\phi} C_2$ it follows via the isomorphism $\theta$ that the subgroup of $K_2$ generated by $\langle x, txt^{-1} \rangle$ is isomorphic to the Heisenberg group $H$. 
Also since $H$ is an index two subgroup of $H  \rtimes_{\phi} C_2 \cong K_2$ with transversal $\{1, t \}$ it follows via the isomorphism $\theta$ that $\langle x, txt^{-1} \rangle$ is an index two subgroup of $K_2$ with transversal $\{1, t\}$.        
This completes the proof of the lemma. 
\end{proof}

At this point, as mentioned earlier, we could now appeal to \cite[Corollary 4.29]{Garreta2020} and \cite{Levent2026, koymans2024hilbert}, to complete the proof. We instead include a self-contained short proof of the result below that reduces the result directly to Hilbert's tenth problem over the ring of integers.

\begin{theorem}
There is a one-relator product of the form $(\Z \ast C_2) / \langle \langle r \rangle \rangle$ with undecidable  
Diophantine problem. Specifically the group
  \[
    K_2 = \langle x, t \mid [x, txt^{-1} x tx^{-1} t^{-1}], t^2 \rangle
  \]
  has an undecidable Diophantine problem.
\end{theorem}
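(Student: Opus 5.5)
By \cref{lem:Heisenberg-subgp}, $K_2$ contains $H = \langle x, y \rangle$ with $y = txt^{-1}$, a copy of the Heisenberg group of index two with transversal $\{1, t\}$. The plan is to reduce Hilbert's tenth problem over $\Z$ to the Diophantine problem in $K_2$. We will show that, inside $K_2$, the ring $(\Z, +, \cdot)$ is interpretable by equations: the carrier set and the graphs of addition and multiplication will all be equationally definable.

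\textbf{Step 1: carrier and addition.} Set $z = [x, y]$; by \cref{lem:Heisenberg-pres} this is central in $H$, and $H$ is nilpotent of class two. The first task is to compute centralisers in $K_2$. In $H$, $C_H(x) = \langle x, z \rangle$ and $C_H(y) = \langle y, z \rangle$. Next, no element of the coset $Ht$ commutes with $x$. Indeed, $ht$ commuting with $x$ means $h\,y\,h^{-1} = x$, since $txt^{-1} = y$. This is impossible, because $x$ and $y$ have distinct images in the abelianisation $H/[H,H] \cong \Z^2$, while conjugation fixes images there. Hence $C_{K_2}(x) = \langle x, z \rangle$, $C_{K_2}(y) = \langle y, z\rangle$, and similarly $C_{K_2}(z) \supseteq H$. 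We will use $C_{K_2}(x) \cap C_{K_2}(y) = \langle z \rangle \cong \Z$, which is equationally definable via the two equations $Xx = xX$ and $Xy = yX$. This copy of $\Z$ is the carrier, encoding $n \leftrightarrow z^n$. Addition is then given by $XY = W$ with $X, Y, W \in \langle z \rangle$.

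\textbf{Step 2: multiplication.} We use the identity $[x^m, y^n] = z^{mn}$ in $H$. The first goal is to define $\{(z^m, x^m)\}$. The natural candidate is the following. The set $\{x^m z^k\} = C_{K_2}(x)$ is definable, and so is $\{y^n z^k\}$. We then want to link $x^m$ to $z^m$ using $y$: for $X = x^m z^k$ we have $[X, y] = [x^m, y] = z^m$. Hence, when $X$ ranges over $C_{K_2}(x)$, the pair $(X, [X,y])$ gives the exponent $m$ as $z^m$, and the harmless central error $z^k$ does not affect the commutator. Likewise, for $Y \in C_{K_2}(y)$ with $Y = y^n z^l$, we have $[x, Y] = z^n$. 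The equations
\[
X \in C(x),\quad Y \in C(y),\quad [X, y] = A,\quad [x, Y] = B,\quad [X, Y] = P
\]
then force $A = z^m$, $B = z^n$ and $P = [x^m z^k, y^n z^l] = z^{mn}$. So $\{(z^m, z^n, z^{mn}) \mid m,n \in \Z\}$ is equationally definable.

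\textbf{Step 3: conclusion.} Given a polynomial equation over $\Z$, we introduce one variable constrained to $\langle z\rangle$ for each integer unknown and each intermediate value. We encode every sum and product by the systems above, and encode constants as explicit powers of $z$. This produces a system over $K_2$ that is solvable if and only if the polynomial has an integer root. Matiyasevich's theorem \cite{Matijasevic} then gives undecidability.

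The main obstacle is Step 1: the centraliser computations must be carried out in $K_2$ rather than only in $H$, so that stray solutions from the coset $Ht$ are excluded. The abelianisation argument above is what I would try first. The step also requires the standard Heisenberg fact that $C_H(x) = \langle x, z\rangle$, which follows from the relation $[x^a y^b z^c, x] = z^{-b}$.
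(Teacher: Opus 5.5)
Your proof is correct, but it takes a different route from the paper's.

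\textbf{The paper's route.} It computes one centraliser, $C_{K_2}(xyx^{-1}y^{-1}) = \langle x, y\rangle$. Elements $gt$ are excluded because conjugation by $t$ swaps $x$ and $y$, hence inverts $z$. This would force $z = z^{-1}$, contradicting that $z$ has infinite order. So the Heisenberg subgroup is equationally definable in $K_2$. The paper then imports undecidability of the Diophantine problem in the Heisenberg group from Duchin--Liang--Shapiro.

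\textbf{Your route.} You compute $C_{K_2}(x) = \langle x, z\rangle$ and $C_{K_2}(y) = \langle y, z\rangle$, and exclude the coset $Ht$ by your abelianisation argument. That argument is valid: $htx = xht$ reduces to $hyh^{-1} = x$, and the $y$ case reduces to $hxh^{-1} = y$ using $t^2 = 1$. You then interpret $(\Z,+,\cdot)$ directly in $K_2$ via $\langle z\rangle = C_{K_2}(x)\cap C_{K_2}(y)$ and $[x^m z^k, y^n z^l] = z^{mn}$. Your commutator identities all check out with the convention $[g,h] = ghg^{-1}h^{-1}$.

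\textbf{Comparison.}
\begin{itemize}
\item The paper's route is shorter given the citation, and modular: it shows the whole finite-index Heisenberg subgroup is definable, so the theorem reduces to a known result about that subgroup.
\item Your route is fully self-contained, in effect rerunning the Duchin--Liang--Shapiro interpretation inside $K_2$. It gives an explicit reduction from Hilbert's tenth problem over $\Z$, which is what the paper's remark before the theorem promises but its proof actually delegates to the citation.
\item The cost is two coset checks (for $x$ and $y$) instead of one (for $z$). You never need $C_{K_2}(z)$, so your aside that $C_{K_2}(z) \supseteq H$ can be dropped.
\end{itemize}
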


\begin{proof}
  We proceed by first showing \(K_2\) contains a copy of the Heisenberg group as
  a finite index subgroup, and then showing that this subgroup is equationally
  definable. Since the Diophantine problem in the Heisenberg group is
  undecidable \cite[Theorem 4]{duchin_liang_shapiro}, the result will follow.

  Let \(y = txt^{-1}\). Note that \(tyt^{-1} = x\). \cref{lem:Heisenberg-subgp}
  tells us that \(\langle x, y \rangle\) is isomorphic to the Heisenberg group.
  We show that \(C_{K_2}(x yx^{-1}y^{-1}) = \langle x, y \rangle\). Naturally,
  \(\langle x, y \rangle \leq C_{K_2}(xyx^{-1}y^{-1})\). So let \(g t^\delta
  \in C_{K_2}(xyx^{-1}y^{-1})\), where \(g \in \langle x, y \rangle\) and
  \(\delta \in \{0, 1\}\). Note that every element of \(K_2\) can be written in
  this form. If \(\delta = 0\) then \(gt^\delta \in \langle x, y \rangle\), so
  assume \(\delta = 1\).  So
  \[
    xyx^{-1} y^{-1} \cdot gt = gt \cdot xyx^{-1}y^{-1}
    = g yxy^{-1}x^{-1}t = g (xyx^{-1}y^{-1})^{-1} t
  \]
  Thus \(xyx^{-1} y^{-1} g = g (xyx^{-1}y^{-1})^{-1}\). Noting that 
  \((xyx^{-1}y^{-1})^{-1}\) is central in \(\langle x, y \rangle\), we have 
  that \(xyx^{-1} y^{-1} =  (xyx^{-1}y^{-1})^{-1}\), a contradiction as
  the subgroup generated by the commutator of the generator of the Heisenberg
  group is isomorphic to \(\mathbb{Z}\), and thus non-trivial and torsion-free.
  Thus \(C_{K_2}(x yx^{-1}y^{-1}) = \langle x, y \rangle\), and so
  \(\langle x, y \rangle\) is definable in \(K_2\), as required.
\end{proof}

Naturally, our single example of a one-relator product leads to various further
questions. We only used the cyclic groups \(\Z\) and \(C_2\); one can ask

\begin{qu}
	If \(C\) and \(C'\) are non-trivial cyclic groups, is there a one-relator
	product of \(C\) and \(C'\) with undecidable Diophantine problem?
\end{qu}

Additionally, by \cite[Theorem 1.1]{Levine2022} the decision question asking if a
single equation admits a solution is decidable in \(K_2\). This leads us to ask
if an example exists where this is not the case. A positive answer to this question
would of course show that the conjugacy problem in all one-relator groups is decidable.

\begin{qu}
	Is it decidable in a given one-relator product of two cyclic groups whether a given
	single equation admits a solution?
\end{qu}

\section{Diophantine problem with single subgroup constraints in one-relator groups}
\label{sec:subgroup-constraints}

In this section we move away from the `plain' Diophantine problem to the Diophantine problem with constraints. Adding constraints to Diophantine problems is not new; Diophantine problems in virtually free groups with rational constraints are known to be decidable \cite{dahmani_guirardel}, as are hyperbolic groups with quasi-isometrically emeddable rational constraints \cite{dahmani_guirardel}. Right-angled Artin groups (and the monoid analogue of trace monoids) do work with `normalised regular' constraints \cite{DiekertMuscholl}, but not with arbitrary rational constraints. The proof of this fact, which is identical to the trace monoids analogue given by Muscholl \cite{MuschollThesis}, only requires subgroup constraints in the right-angled Artin group \(F_2 \times \Z\), but three such subgroups are required. We shall give an alternative proof of that result below in Lemma~\ref{lem:AP3DPG} where we prove undecidability by showing a reduction to yet another type of constraint: abelianisation constraints. First shown to be undecidable for free monoids by B\"{u}chi and Senger \cite{BuchiSenger}, these have recently been studied in various classes of groups including right-angled Artin groups and hyperbolic groups \cite{CiobanuGarreta}, and virtually abelian groups \cite{CiobanuEvettsLevine}. Arguments similar to those used to prove Diophantine problems with abelianisation constraints are undecidable have had some success in the `plain' Diophantine problem, such as for Thompson's group \(F\) \cite{ElliottLevine} and in \cref{GBS-thm}.

\subsection*{Single subgroup constraints}
The weakest constraint one might consider adding to the Diophantine problem is one with constraints that variables may lie in one single fixed finitely generated subgroup. 

This property mixes the subgroup membership problem together with the Diophantine problem. More precisely, given a finitely generated group $G$ and a finitely generated subgroup $H$ we say \emph{$G$ has decidable Diophantine problem with $H$-constraints} if there as an algorithm that takes any finite system of equations over $G$ together with a finite list of constraints of the form $X_1, \ldots, X_k \in H$ for variables $X_i$ and decides whether or not it has a solution. This notion gives an approach to the plain Diophantine problem. Indeed, if $H$ is equationally definable in $G$ and $G$ has and undecidable Diophantine problem with $H$-constraints, then $G$ will have an  undecidable Diophantine problem. Note that it is immediate from the definition that if a group $G$ has a decidable Diophantine problem with $H$-constraints then the membership problem for $H$ in $G$ must be decidable.   

The main result of this section is the following. 

\begin{theorem}\label{thm:singleSubgroupConstraintsOneRelatorGroup} Let
\[
	G = \langle c, q \mid c(qcq^{-1}) = (qcq^{-1})c \rangle.
\] 
There is a single fixed finitely presented subgroup $H \leq G$ such that 
$H$ is isomorphic to a free group of rank $6$
and the Diophantine problem for $G$ with $H$-constraints is undecidable.    
  \end{theorem}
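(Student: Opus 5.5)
The plan is to work in the free-by-cyclic form of $G$ recalled in the introduction,
\[
G \cong \langle a, b, s \mid sas^{-1} = ab,\ sbs^{-1} = b \rangle = F(a,b) \rtimes \langle s \rangle,
\]
where $s^n a s^{-n} = ab^n$ and $s^n b s^{-n} = b$ for all $n \in \Z$. I would reduce Hilbert's tenth problem over the natural numbers \cite{Matijasevic} to the Diophantine problem of $G$ with $H$-constraints, following the same template as in the proof of \cref{GBS-thm}.

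\textbf{Step 1: definable sets from centralisers.} I would first compute some centralisers using the semidirect product normal form $w s^n$ with $w \in F(a,b)$.
\begin{itemize}
\item[(i)] $C_G(b) = \langle b, s \rangle \cong \Z^2$. If $ws^n$ commutes with $b$, then $wbw^{-1} = b$ in $F(a,b)$, so $w \in \langle b \rangle$.
\item[(ii)] $C_G(a) = \langle a \rangle$. If $ws^n$ commutes with $a$, then $w (ab^n) w^{-1} = a$, and $ab^n$ is conjugate to $a$ in $F(a,b)$ only when $n = 0$.
\end{itemize}
The subgroup constraint is what compensates for the absence of the positivity trick used in \cref{GBS-defable}. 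I would therefore choose the rank-$6$ subgroup $H \le F(a,b) \le G$ so that it contains $a$ and $b$ together with four auxiliary elements. These extra elements are picked so that intersecting $H$ with $C_G(b)$, and with suitable conjugates, cuts out the sets I need: the copy $\{b^n\}$ of $\Z$, the set $\{s^n\}$, and the graph $\{(s^n, b^n)\}$. The graph is cut out by the equation $a^{-1} s^n a s^{-n} = b^n$. Addition is then simply $XY = Z$ inside $\langle b \rangle$.

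\textbf{Step 2: multiplication.} Given $s^i$ and $a^j$, with $a^j$ ranging over $C_G(a)$, the element
\[
P = s^i a^j s^{-i} = (ab^i)^j
\]
is equationally definable. The goal is to extract $b^{ij}$ from $P$, since $ij = \expsum_b(P)$. As in \cref{GBS-defable}\eqref{GBS-defable-expsum-s-t} and the B\"uchi--Senger argument \cite{BuchiSenger}, I would realise the exponent-sum constraint by an equation together with $H$-constraints. The intended form is: find a variable $W$ and require $P\, b^{-k} = W$ together with $W \in H'$, where $H'$ is a subgroup built into $H$ out of elements of $b$-exponent sum zero that still contains the relevant words $(ab^i)^j b^{-ij}$ for all $i,j$. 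To get this, I would use the automorphism $s$ itself to rewrite $(ab^i)^j b^{-ij}$ as a product of conjugates of $a$-powers by $b$-powers, so that a fixed finite set of generators suffices. Those generators are roughly $a$, $b$, $bab^{-1}$, $b^{-1}ab$ and two elements controlling the tail, which gives rank $6$.

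\textbf{Step 3: soundness, the main obstacle.} I then need to show that no spurious solutions occur: membership of $P\,b^{-k}$ in $H$ must force $k = ij$. The first thing I would try is to compute a Stallings folding of $H$ and check directly that the only elements of the coset $P\langle b \rangle$ lying in $H$ are those with the correct exponent, using that $P$ has the explicit reduced form $(ab^i)^j$. If this fails, I would fall back on reducing from the Diophantine problem with exponent-sum constraints in a free monoid, embedding $\{a, ab\}^*$ via the $s$-action. Finally, combining the definability of addition and multiplication with \cite{Matijasevic} gives undecidability, exactly as in \cref{GBS-thm}.
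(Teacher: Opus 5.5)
There is a genuine gap. It sits exactly where you flagged the main obstacle, and it cannot be closed within your set-up.

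\textbf{The choice of $H$.} A subgroup $H \le F(a,b)$ containing $a$ and $b$ is all of $F(a,b)$. That group is free of rank $2$, not $6$, and an $H$-constraint would then only assert $X \in F(a,b)$.

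\textbf{Soundness fails.} The soundness needed in Steps 2--3 fails for every finitely generated subgroup $H' \le F(a,b)$ that contains your witnesses.
\begin{enumerate}
\item Such an $H'$ contains $(ab^i)b^{-i} = a$ and $(ab^i)^2b^{-2i} = a\cdot b^iab^{-i}$. Hence it contains $b^iab^{-i}$ for all $i \ge 0$.
\item Its Stallings graph is finite and folded, so the $b$-edges define a partial injection on vertices.
\item Since $b^i$ is readable from the base vertex for every $i \ge 0$, the $b$-orbit of the base vertex must close up. So $b^m \in H'$ for some $m \ge 1$.
\end{enumerate}
Consequently $H'$ does not consist of elements of $b$-exponent sum zero. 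Moreover, whenever $Pb^{-ij} \in H'$, also $Pb^{-ij-tm} \in H'$ for all $t \in \Z$, so spurious values of $k$ are always admitted.

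\textbf{Building $H'$ from $H$ does not help.} Since $F(a,b) \trianglelefteq G$, all conjugates of $H$ lie in $F(a,b)$. By Howson's theorem, intersections of finitely many of them, or intersections with the centralisers $\langle a\rangle$ and $\langle b,s\rangle$ you computed, are again finitely generated subgroups of $F(a,b)$. The same argument then applies to them.

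\textbf{Other gaps.} Your fallback via $\{a,ab\}^*$ would need precisely such exponent-sum constraints, so it meets the same obstruction. Separately, Step 2 never explains how the exponent $j$ of $a^j$ is linked to your copy $\langle b\rangle$ of $\Z$, which the multiplication gadget requires.

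\textbf{The missing idea.} $H$ must not lie in $F(a,b)$. You need the failure of the Howson property in $G$, which comes from its subgroups isomorphic to $F_2 \times \Z$; for example $C_G(s) = \langle s\rangle \times \langle b, aba^{-1}\rangle$.

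The paper works with $c_i = q^icq^{-i}$. There $C_G(c_1) = \langle c_0,c_1,c_2\rangle \cong F_2\times\Z$ is equationally definable by \cref{Bob-gp-centraliser}. It takes
\[
H = \langle c_0,c_2,c_4c_5,c_6,c_8,c_9c_{10}\rangle .
\]
Intersecting three conjugates, using parabolic intersections in the RAAG on the line together with a retraction, gives
\[
H \cap q^{-4}Hq^4 \cap q^{-8}Hq^8 = \langle c_0,c_2\rangle \cap \langle c_0c_1,c_2\rangle \cap \langle c_0,c_1c_2\rangle = [\langle c_0,c_2\rangle,\langle c_0,c_2\rangle].
\]
This is an infinitely generated subgroup, equal to the commutator subgroup of $C_G(c_1)$.

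That yields abelianisation constraints in $F_2\times\Z$, whose Diophantine problem is undecidable by \cite{CiobanuGarreta}. So the paper needs no direct encoding of multiplication at all. Freeness of rank $6$ follows because a transvection automorphism of the RAAG on $c_0,\dots,c_{10}$ carries $H$ to a standard parabolic subgroup.
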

Before proving this result we make some remarks about it. 

The group $G$ does have decidable subgroup membership problem. Indeed, $G$ is a 3-manifold group by \cite{Burns1987} and every 3-manifold group has a decidable subgroup membership problem; see \cite{FriedlWilton}. In particular, membership in the subgroup $H$ of $G$ in the statement of the result is decidable.        

It is currently open whether all one-relator groups have decidable subgroup membership problem; see \cite[Problem 1.8.8]{CFMarcoOneRelatorSurvey}. 

The group in the result above fits into several other families which gives the following corollary. 

\begin{cor}\label{cor:OtherClasses} The group $ G = \langle c, q \mid c(qcq^{-1}) = (qcq^{-1})c \rangle $ 
is a one-relator, free-by-cyclic, 3-manifold group that contains a fixed finitely generated subgroup $H \leq G$ such that the Diophantine problem for $G$ with $H$-constraints is undecidable. \end{cor}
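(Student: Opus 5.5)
The plan is to combine \cref{thm:singleSubgroupConstraintsOneRelatorGroup} with structural facts about $G$ that were already quoted in the introduction and in the remarks after that theorem. \cref{thm:singleSubgroupConstraintsOneRelatorGroup} supplies the subgroup $H \cong F_6$ for which the Diophantine problem for $G$ with $H$-constraints is undecidable. So it only remains to verify the three structural properties in the statement.

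That $G$ is a one-relator group is immediate from its presentation. That $G$ is a $3$-manifold group is the result of Burns, Karrass and Solitar \cite{Burns1987}, already invoked in the remarks after \cref{thm:singleSubgroupConstraintsOneRelatorGroup}; I would simply cite it.

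For the free-by-cyclic structure I would give a short Tietze argument confirming the presentation $\langle a, b, s \mid sas^{-1} = ab,\ sbs^{-1} = b\rangle$ quoted from \cite[Section~2]{Burns1987}.
\begin{enumerate}
\item The first relation gives $b = a^{-1}sas^{-1}$, so $b$ can be eliminated.
\item The second relation then becomes $[s, a^{-1}sas^{-1}] = 1$.
\item Since $s$ commutes with $s^{-1}$, this is equivalent to $s$ commuting with $a^{-1}sa$.
\item Setting $c = s$ and $q = a^{-1}$, this is exactly the relation $c\,(qcq^{-1}) = (qcq^{-1})\,c$, so the two presentations define the same group.
\end{enumerate}
The three-generator presentation is visibly that of a semidirect product $F(a,b) \rtimes \langle s \rangle$. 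The conjugation action of $s$, namely $a \mapsto ab$ and $b \mapsto b$, is an automorphism of $F(a,b)$, with inverse $a \mapsto ab^{-1}$ and $b \mapsto b$. Hence $G \cong F_2 \rtimes \Z$.

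There is no real obstacle here. The only step needing any care is the Tietze verification, in particular matching $c = s$ and $q = a^{-1}$ correctly, and that is routine.
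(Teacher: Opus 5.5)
Your proposal is correct and follows the paper's route. The corollary combines \cref{thm:singleSubgroupConstraintsOneRelatorGroup} with the citation of \cite{Burns1987} for the $3$-manifold structure, and with the Tietze elimination of $b$ from \cref{lem:H-K-iso} for the $F_2 \rtimes \Z$ structure; your substitution $q = a^{-1}$ differs only cosmetically from the paper's $q = a^{-1}c$, since both give $qcq^{-1} = a^{-1}sa$.
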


We currently do not know whether or not $G$ has a decidable Diophantine problem. If the subgroup $H$ in the above theorem were equationally definable in $G$ then it would show that $G$ has undecidable Diophantine problem, but it is not clear whether or not it is. As an aside, related to the question of whether $G$ has decidable Diophantine problem, note that another presentation for the group $G$ is    
\[
  \langle c, q \mid (q^icq^{-i})(q^jcq^{-j}) = (q^jcq^{-j}) (q^icq^{-i}) \; (|i-j| \leq 1) \rangle
\]
If we take this presentation and remove the restriction on $i$ and $j$ we obtain the infinitely presented wreath product     
\[
  \langle c, q \mid (q^icq^{-i})(q^jcq^{-j}) = (q^jcq^{-j}) (q^icq^{-i}) \; i,j \in \Z \rangle \cong \Z \wr \Z.
\]
It was recently proved by Dong \cite{Dong25} that this group $\Z \wr \Z$ has an undecidable Diophantine problem. This leads naturally to the question 
of whether there exists an integer $m$ such that the group      
\[
  \langle c, q \mid (q^icq^{-i})(q^jcq^{-j}) = (q^jcq^{-j}) (q^icq^{-i}) \; (|i-j| \leq m) \rangle
\]
has an undecidable Diophantine problem. 

For the proof of \cref{thm:singleSubgroupConstraintsOneRelatorGroup}, and for other results in this section, we shall need the general notion of Diophantine problems with constraints which we now formally define. 

\begin{dfn}
		Let \(\mathscr{C}\) be a collection of subsets of \(G\). A \emph{system
		of equations with \(\mathscr{C}\)-constraints} \(\mathcal E\) in \(G\)
		is a system \(\mathcal F\) of equations in \(G\) together with finitely
		many statements of the form \(X \in C\) where \(X\) is a variable in
		\(\mathcal F\) and \(C \in \mathscr{C}\). A \emph{solution} to
		\(\mathcal E\) is a homomorphism \(\phi\) which is a solution to
		\(\mathcal F\) and such that for each statement of the form \(X \in C\),
		we have \(\phi(X) \in C\) is satisfied.

		Particular cases of interest are when:
		\begin{enumerate}
			\item \(\mathscr{C} = \{[G, G]\}\) - the Diophantine problem with
					\emph{abelianisation constraints};
			\item \(\mathscr{C}\) is the set of all finitely generated
					subgroups (resp. rational subsets) - the Diophantine
					problem with \emph{subgroup (resp. rational) constraints};
			\item \(\mathscr{C} = \{H\}\), for some fixed finitely generated subgroup
					\(H \leq G\) - the Diophantine problem with \emph{\(H\)-constraints}.
		\end{enumerate}
\end{dfn}

Before we begin computing definable sets in \(G\), we first note two different
versions of it.

  \begin{lem} \label{lem:H-K-iso}
    There is an isomorphism \(\Theta\) from \(K = \langle a, b, s \mid sas^{-1} = ab, sbs^{-1} = b\rangle\) to \(G = \langle c, q \mid c(qcq^{-1}) = (qcq^{-1}) c \rangle\) defined by
    \[
        \Theta(a) = cq^{-1}, \quad \Theta(b) = qcq^{-1} c^{-1}, \quad \Theta(s) = c.
    \]
    In addition, let \(c_i = q^i c q^{-i}\), where \(i \in \mathbb{Z}\). Then
    \(H = \langle \{c_i \mid i \in \mathbb{Z}\} \rangle\) is the
    right-angled Artin group on an infinite line, and \(G \cong H \rtimes_\phi \Z\), where
    \(\phi \in \Aut(H)\) is defined by \(c_i \mapsto c_{i + 1}\).
  \end{lem}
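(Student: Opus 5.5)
The plan has two independent parts: first verify that \(\Theta\) is an isomorphism by exhibiting an explicit inverse, then obtain the splitting \(G \cong H \rtimes_\phi \Z\) by a Reidemeister--Schreier (Magnus rewriting) argument for the kernel of the \(q\)-exponent-sum map.

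\emph{Step 1: \(\Theta\) is a well-defined homomorphism.} I check that the two relations of \(K\) are sent to the identity. For the first relation,
\[
\Theta(s)\Theta(a)\Theta(s)^{-1} = c\,cq^{-1}\,c^{-1} = (cq^{-1})(qcq^{-1}c^{-1}) = \Theta(a)\Theta(b).
\]
For the second, write \(c_1 = qcq^{-1}\), so that \(\Theta(b) = c_1c^{-1}\). Then
\[
\Theta(s)\Theta(b)\Theta(s)^{-1} = c\,c_1c^{-1}\,c^{-1} = c_1c^{-1},
\]
where the last equality uses the defining relation \(cc_1 = c_1c\) of \(G\).

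\emph{Step 2: an inverse \(\Psi\colon G \to K\).} Since \(\Theta(a) = cq^{-1}\) and \(\Theta(s) = c\), we have \(q = \Theta(a^{-1}s)\). I therefore define \(\Psi(c) = s\) and \(\Psi(q) = a^{-1}s\), and must check that \(\Psi\) respects the relator \(cc_1 = c_1c\).

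We have \(\Psi(c_1) = a^{-1}s\,s\,s^{-1}a = a^{-1}sa\). Using \(sas^{-1} = ab\), this becomes \(a^{-1}(sas^{-1})s = bs\). The relation \(sbs^{-1} = b\) says that \(s\) commutes with \(b\), so \(s\) commutes with \(bs\). Hence \(\Psi\) respects the relator.

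On generators, \(\Psi\Theta\) and \(\Theta\Psi\) act as follows:
\begin{itemize}
\item \(\Psi\Theta(a) = \Psi(cq^{-1}) = s\,s^{-1}a = a\);
\item \(\Psi\Theta(s) = s\);
\item \(\Psi\Theta(b) = \Psi(c_1c^{-1}) = bs\,s^{-1} = b\);
\item \(\Theta\Psi(c) = c\) and \(\Theta\Psi(q) = qc^{-1}\,c = q\).
\end{itemize}
So \(\Psi\Theta\) and \(\Theta\Psi\) are the identity maps on generators, and \(\Theta\) is an isomorphism.

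\emph{Step 3: the semidirect product structure.} The relator \(c\,qcq^{-1}\,c^{-1}\,qc^{-1}q^{-1}\) has zero exponent sum in \(q\). Hence \(\varepsilon\colon G \to \Z\), \(c \mapsto 0\), \(q \mapsto 1\), is a well-defined surjection. The map \(q \mapsto 1\) splits it, so \(G = \ker\varepsilon \rtimes \langle q \rangle\).

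I apply Reidemeister--Schreier to \(\ker\varepsilon\) with Schreier transversal \(\{q^i \mid i \in \Z\}\).
\begin{itemize}
\item The Schreier generators are the elements \(c_i = q^icq^{-i}\), since the generators coming from \(q\) are trivial.
\item The relators are the conjugates \(q^i r q^{-i}\) of the relator \(r\), rewritten in the \(c_i\). This gives exactly \(c_ic_{i+1} = c_{i+1}c_i\) for \(i \in \Z\).
\end{itemize}
Thus \(\ker\varepsilon = H\) is the right-angled Artin group whose defining graph is the infinite line \(\cdots - c_{-1} - c_0 - c_1 - \cdots\). Conjugation by \(q\) sends \(c_i \mapsto c_{i+1}\), which gives the automorphism \(\phi\) and \(G \cong H \rtimes_\phi \Z\).

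The only genuinely delicate point is Step 2, namely checking that \(\Psi\) respects the relator. The rest is routine bookkeeping; in Step 3 the only care needed is confirming that the rewritten relators are exactly the commutation relations between adjacent \(c_i\).
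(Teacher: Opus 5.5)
Your proposal is correct. For the isomorphism $\Theta$ you do essentially what the paper does. The paper eliminates $b$ by Tietze moves and adjoins $c = s$, $q = a^{-1}c$, which is exactly the substitution behind your inverse $\Psi(c)=s$, $\Psi(q)=a^{-1}s$. Your version simply makes the relator checks and the two composites explicit, and those computations are right. (One small slip: the splitting of $\varepsilon$ is the section $1 \mapsto q$, not "$q \mapsto 1$".)

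For the semidirect-product part your route genuinely differs from the paper's.

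\emph{The paper's route.} It cites Burns--Karrass--Solitar for the fact that $H$ is the infinite-line RAAG. It then starts from the abstract group $A \rtimes_\phi \Z$, with $A$ that RAAG, and writes its standard presentation $\langle c_j, q \mid c_jc_{j+1}=c_{j+1}c_j,\ qc_jq^{-1}=c_{j+1}\rangle$. Tietze-eliminating the $c_j$ with $j\neq 0$ gives the presentation of $G$. Implicitly this also shows that $A$ maps isomorphically onto $\langle c_i \mid i\in\Z\rangle$, since the base of a semidirect product embeds.

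\emph{Your route.} You compute a presentation of $\ker\varepsilon$ directly by Reidemeister--Schreier with transversal $\{q^i\}$. Your rewriting of $q^i r q^{-i}$ to $c_ic_{i+1}c_i^{-1}c_{i+1}^{-1}$ is correct.

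\emph{Comparison.} Your argument is self-contained, with no citation needed. It identifies $H$ directly as the kernel of the $q$-exponent-sum map, so normality and the splitting come for free. The paper's argument is shorter on the page because it avoids the rewriting bookkeeping. It relies instead on the standard presentation of a semidirect product and, as written, on the external citation for the structure of $H$.
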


\begin{proof} The fact that $K \cong G$ is proved in \cite[Section~2]{Burns1987}. It is straightforward to prove $\Theta$ is an isomorphism by eliminating $b$ from the presentation for $K$, then adding the redundant generators $c,q$ and relations $c=s$ and $q= a^{-1}c$. It is also observed in \cite[Section~2]{Burns1987} that the subgroup $H$ has presentation $\langle c_i \mid c_i c_{i+1} = c_{i+1} c_i  \; (i \in \Z) \; \rangle$, that is, it is the right-angled Artin group on an infinite line. Finally, the group $H \rtimes_\phi \Z$ has presentation 
\[
\langle c_j, q \mid 
c_j c_{j+1} = c_{j+1} c_j, \; 
q c_j q^{-1} = c_{j+1} \; (j \in \Z) \; \rangle  
\]
which after eliminating all the redundant generators $c_i = q^i c q^{-i}$ where $i \neq 0$ gives the presentation \[
H \rtimes_\phi \Z \cong \langle c_0, q \mid c_0 (qc_0 q^{-1}) = (qc_0 q^{-1}) c_0 \rangle \cong  
\langle c, q \mid c (qc q^{-1}) = (qc q^{-1}) c \rangle = G. \qedhere
\]  
\end{proof}

As with many proofs of undecidability of Diophantine-like problems in groups, we
first compute a centraliser in \(G\).

\begin{lem}
		\label{Bob-gp-centraliser}
Let \(G = \langle c, q \mid c (qcq^{-1}) = (qcq^{-1})  \rangle\). Then
\(C_G(qcq^{-1}) = \langle c, qcq^{-1}, q^2 c q^{-2} \rangle\).
\end{lem}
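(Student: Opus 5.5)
We will use the semidirect product decomposition of \cref{lem:H-K-iso}: write \(G = H \rtimes_\phi \Z\), where \(H = \langle c_i \mid c_ic_{i+1} = c_{i+1}c_i\rangle\) is the right-angled Artin group on the infinite line, \(q\) acts by \(c_i \mapsto c_{i+1}\), and \(qcq^{-1} = c_1\). The containment \(\langle c_0, c_1, c_2\rangle \subseteq C_G(c_1)\) is immediate from the defining relations \(c_0c_1=c_1c_0\) and \(c_1c_2=c_2c_1\). For the reverse containment, take an arbitrary element \(g = h q^k\) with \(h \in H\) and \(k \in \Z\), and suppose it commutes with \(c_1\).

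The first step is to show that \(k = 0\). Conjugating, we get
\[
g c_1 g^{-1} = h c_{1+k} h^{-1},
\]
so the condition says that \(c_1\) and \(c_{1+k}\) are conjugate in \(H\). Consider the abelianisation \(H \to \bigoplus_{i\in\Z}\Z\). It sends \(c_1\) and \(c_{1+k}\) to distinct basis vectors when \(k \neq 0\), and conjugate elements have the same image. Hence \(k=0\).

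The second step computes the centraliser inside \(H\), which is now a problem purely about a right-angled Artin group. The key fact is the standard description of centralisers of generators in RAAGs (Servatius' centraliser theorem, or a direct normal form argument): the centraliser of a vertex generator \(v\) is the subgroup generated by the star of \(v\), i.e. by \(v\) together with its neighbours. On the infinite line, the star of \(c_1\) is \(\{c_0, c_1, c_2\}\), giving \(C_H(c_1) = \langle c_0, c_1, c_2\rangle\), which is the claimed subgroup.

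One subtlety is that \(H\) is infinitely generated, so we must make sure the theorem applies. We handle this by noting that any \(h \in C_H(c_1)\) lies in a finitely generated parabolic subgroup \(\langle c_{-N}, \ldots, c_N\rangle\). That subgroup is itself a RAAG on a path graph, and it is a retract of \(H\): the map killing all other generators is a well-defined retraction. So it suffices to compute the centraliser of \(c_1\) in the finite path RAAG, where the star of \(c_1\) is still \(\{c_0,c_1,c_2\}\).

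If we want to avoid citing Servatius, we can give a direct argument instead. The map killing \(c_1\) sends \(H\) to the free product
\[
\langle c_i : i \le 0\rangle \ast \langle c_i : i\ge 2\rangle,
\]
and \(H\) splits as an amalgam \(\langle \ldots, c_0, c_1\rangle \ast_{\langle c_1\rangle} \langle c_1, c_2, \ldots\rangle\), or equivalently as an HNN/graph-of-groups structure around \(c_1\). Centralisers of elements of the edge group can then be read off from normal forms, in the same way as in \cref{GBS-cents}. We expect this verification to be the only step with real content; the rest is bookkeeping.
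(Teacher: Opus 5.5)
Your proposal is correct and follows the paper's proof essentially step for step: it uses the semidirect product decomposition from \cref{lem:H-K-iso}, the abelianisation argument forcing $k=0$, and Servatius' centraliser theorem to get $C_H(c_1)=\langle c_0,c_1,c_2\rangle$. Your reduction to the finitely generated path RAAG $\langle c_{-N},\ldots,c_N\rangle$ via the retraction is a useful extra detail. The paper skips it when it applies the centraliser theorem directly to the infinitely generated RAAG.
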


\begin{proof}
		We begin by showing \(\langle c, qcq^{-1}, q^2 c q^{-2} \rangle \subseteq C_G(qcq^{-1})\).
		It is immediate from the presentation that \(c, qcq^{-1} \in C_G(qcq^{-1})\). We have
		that 
		\[
			qcq^{-1} \cdot q^2 c q^{-2} = qc qc q^{-1} q^{-1} = q^2cq^{-1} cq^{-1} = 
			q^2 c q^{-2} \cdot qcq^{-1}.
		\]
		Thus \(\langle c, qcq^{-1}, q^2 c q^{-2} \rangle \subseteq
		C_G(qcq^{-1})\). For the reverse containment, we note that if we define
		\(c_i = q^i c q^{-i}\) for all \(i \in \Z\), then the subgroup \(A\)
		generated by \(\{c_i \mid i \in \Z\}\) is the right-angled Artin group
		on the infinite line (that is, the Cayley graph of \(\Z\) and
		\(G\) is a semidirect product of \(A\) with \(\Z\), with \(\Z\)
		(generated by \(t\)) acting by \(q c_i q^{-1} = c_{i + 1}\), by
		\cref{lem:H-K-iso}.

		So let \(u q^k \in C_G(qcq^{-1}) = C_G(c_1)\), where \(u \in A\) and
		\(k \in \Z\). Then
			\(c_1 \cdot uq^k  = uq^k \cdot c_1 
						    = u c_{1 + k} q^k\).
		Since we are in a semidirect product, it follows that \(c_1 u =_A u
		c_{1 + k}\) (and redundantly \(k = k\)). Since the abelianisations of
		the words on either side of this equality must be equal, it follows
		that \(k = 0\). Thus \(c_1 u =_A u c_1\); that is, \(u \in C_A(c_1)\).
		By \cite[The Centralizer Theorem]{Servatius89} (see also
		\cite[Proposition 2.1]{DayWade19}), centralisers of generators of
		right-angled Artin groups are the parabolic subgroups generated by the
		neighbours of the generator in the underlying graph, together with the
		generator itself. Thus \(C_A(c_1) = \langle c_0, c_1, c_2 \rangle\).
		We can conclude that \(ut^k \in \langle c_0, c_1, c_2 \rangle\) and so
		\(C_G(c_1) \subseteq \langle c_0, c_1, c_2\rangle\). In other words,
		\(C_G(qcq^{-1}) \subseteq \langle c, qcq^{-1}, q^2 cq^{-2} \rangle\),
		as required.
\end{proof}

\begin{lem}\label{lem:retraction}
Let $L$ be a group and let $P \leq L$ be a retract via a homomorphism $\phi: L \rightarrow L$ such that $\phi(p) = p$ for all $p \in P$ and $\phi(L) = P$. Then for any subset $X$ of $L$, if $\phi(X) \subseteq X$ then $\langle X \rangle \cap P = \langle \phi(X) \rangle \leq P$. 
  \end{lem}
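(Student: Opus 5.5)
We prove the two inclusions separately.

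For \(\langle \phi(X) \rangle \subseteq \langle X \rangle \cap P\): by hypothesis \(\phi(X) \subseteq X\), so \(\langle \phi(X) \rangle \leq \langle X \rangle\). Since \(\phi(L) = P\), every element of \(\phi(X)\) lies in \(P\), and \(P\) is a subgroup, so \(\langle \phi(X) \rangle \leq P\). This gives the first inclusion, together with the claim \(\langle \phi(X) \rangle \leq P\).

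For the reverse inclusion, take \(g \in \langle X \rangle \cap P\) and write \(g = x_1^{\epsilon_1} \cdots x_n^{\epsilon_n}\) with \(x_i \in X\) and \(\epsilon_i \in \{\pm 1\}\). Because \(g \in P\) and \(\phi\) fixes \(P\) pointwise, \(g = \phi(g)\). Because \(\phi\) is a homomorphism, \(\phi(g) = \phi(x_1)^{\epsilon_1} \cdots \phi(x_n)^{\epsilon_n}\). Hence \(g \in \langle \phi(X) \rangle\).

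The argument is routine. The only point needing care is where the hypotheses enter: \(\phi(X) \subseteq X\) is used only for the first inclusion, and \(\phi|_P = \mathrm{id}_P\) is used only for the second.
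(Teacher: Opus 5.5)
Your proof is correct and follows the paper's argument essentially verbatim: you apply $\phi$ to a word over $X^{\pm 1}$ representing an element of $P$ to get $\langle X \rangle \cap P \subseteq \langle \phi(X) \rangle$, and you use $\phi(X) \subseteq X$ for the reverse inclusion. You also make explicit that $\langle \phi(X) \rangle \leq P$ because $\phi(X) \subseteq \phi(L) = P$, which the paper leaves implicit by starting its converse direction with an element $p \in P$.
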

\begin{proof} 
Let $p \in P$ and let $x_1, \ldots, x_k \in X^{\pm 1}$.
If  
$p = x_1 \ldots x_k$ then $p = \phi(p) = \phi(x_1) \ldots \phi(x_k) \in \langle \phi(X) \rangle$. Conversely, if $p = \phi(x_1) \ldots \phi(x_k)$ then since $\phi(X) \subseteq X$ it follows that $p \in \langle X \rangle$ and thus $p \in \langle X \rangle \cap P$        
  \end{proof}

We also need the following straightforward lemmas about intersections of subgroups in $A(P_3) \cong F_2 \times \Z$. The following lemma is a straightforward consequence of the definitions.
  
\begin{lem} Exponent sums of elements of \(A(P_3) = \langle a, b, c \mid ab = ba, bc = cb \rangle\) are well-defined; that is, if \(u, v \in (\{a, b, c\}^\pm)^\ast\), are such that \(u = _{A(P_3)} v\), then \(\expsum_x(u) = \expsum_x(v)\) for each \(x \in \{a, b, c\}\). \end{lem}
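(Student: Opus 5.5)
The plan is to show that exponent sums are invariants of the defining relations. Then, since group equality is generated by free reduction and by applying relators, invariance under those elementary moves gives the claim. The cleanest way to package this is via the abelianisation homomorphism. Define $\psi \colon F(\{a,b,c\}) \to \Z^3$ by sending $a, b, c$ to the three standard basis vectors $e_a, e_b, e_c$. For any word $u$ over $\{a,b,c\}^{\pm}$, the coordinates of $\psi(u)$ are $\expsum_a(u)$, $\expsum_b(u)$ and $\expsum_c(u)$. This holds by induction on the length of $u$, since each letter $x^{\pm 1}$ contributes exactly $\pm e_x$. It is also consistent with free reduction, because inserting or deleting $xx^{-1}$ changes no exponent sum.

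The key step is to check that $\psi$ factors through $A(P_3)$. By von Dyck's theorem this only requires each defining relator to lie in $\ker\psi$. The relators are $aba^{-1}b^{-1}$ and $bcb^{-1}c^{-1}$. Each of these has exponent sum $0$ in every letter, so it maps to $0$ in the abelian group $\Z^3$. Hence $\psi$ induces a homomorphism $\bar\psi \colon A(P_3) \to \Z^3$.

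Finally, suppose $u =_{A(P_3)} v$. Then $u$ and $v$ have the same image in $A(P_3)$, so $\psi(u) = \bar\psi(\bar u) = \bar\psi(\bar v) = \psi(v)$. Comparing coordinates gives $\expsum_x(u) = \expsum_x(v)$ for each $x \in \{a,b,c\}$.

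I expect no real obstacle here. The only points needing care are that exponent sums are well defined on words rather than only on reduced words, and that the relators are balanced in every generator; both are immediate from the definitions.
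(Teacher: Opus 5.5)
Your proof is correct. The paper gives no written proof, only the remark that the lemma is a straightforward consequence of the definitions; your argument is exactly that routine verification: both relators have zero exponent sum in every generator, so the exponent-sum map $F(\{a,b,c\}) \to \Z^3$ factors through $A(P_3)$.
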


\begin{lem}\label{Lem:DPSGAP3}
In the group \(A(P_3) = \langle a, b, c \mid ab = ba, bc = cb \rangle\) we have
\[
\langle ab, c \rangle \cap
    \langle a, bc \rangle \cap \langle a, c \rangle = [\langle a, c \rangle,
    \langle a, c \rangle].
\]
\end{lem}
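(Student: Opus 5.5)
The plan is to use the splitting $A(P_3) = \langle a, c\rangle \times \langle b\rangle \cong F(a,c)\times\Z$. This holds because $b$ commutes with both $a$ and $c$, and $\langle a,c\rangle$ is free of rank two: it is the parabolic subgroup on the non-adjacent vertices $a, c$, or more simply, killing $b$ retracts $A(P_3)$ onto $F(a,c)$. So I would write every element uniquely as a pair $(w, b^n)$ with $w \in F(a,c)$ and $n\in\Z$, and describe each of the three subgroups explicitly in these coordinates.

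For the first subgroup, consider the map $\psi_a \colon F(a,c)\to A(P_3)$ given by $a\mapsto ab$, $c\mapsto c$. Since $b$ is central, $\psi_a(w) = w\, b^{\expsum_a(w)}$. Hence
\[
\langle ab, c\rangle = \psi_a(F(a,c)) = \{ (w, b^{\expsum_a(w)}) \mid w\in F(a,c)\}.
\]
In the same way, $a\mapsto a$, $c\mapsto bc$ gives
\[
\langle a, bc\rangle = \{(w, b^{\expsum_c(w)})\mid w\in F(a,c)\}.
\]
Finally, $\langle a,c\rangle = \{(w,1)\}$. The exponent sums are well defined by the preceding lemma, and also because $F(a,c)$ is free.

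Intersecting, an element $(w,b^n)$ lies in all three subgroups if and only if $n=0$ (from $\langle a,c\rangle$), $n=\expsum_a(w)$, and $n=\expsum_c(w)$. These conditions hold together exactly when $w\in\langle a,c\rangle$ has $\expsum_a(w)=\expsum_c(w)=0$. It then remains to recall the standard fact that in the free group $F(a,c)$ the commutator subgroup is exactly the kernel of the abelianisation map $w\mapsto(\expsum_a(w),\expsum_c(w))\in\Z^2$. Therefore this set equals $[\langle a,c\rangle,\langle a,c\rangle]$, which proves both inclusions at once.

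There is no real obstacle. The only point needing care is the identification of $\langle ab,c\rangle$ with the graph of $\expsum_a$. Containment of the generated subgroup in the set $\{(w, b^{\expsum_a(w)})\}$ is clear because that set is a subgroup containing $ab$ and $c$. The converse containment follows from the formula $\psi_a(w)=w\,b^{\expsum_a(w)}$ applied to an arbitrary $w$.
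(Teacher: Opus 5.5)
Your proposal is correct and follows essentially the same route as the paper. Both identify $\langle ab, c\rangle$ with $\{u b^{\expsum_a(u)} \mid u \in \langle a,c\rangle\}$ via the substitution $a \mapsto ab$, and symmetrically $\langle a, bc\rangle$ with $\{u b^{\expsum_c(u)}\}$. Intersecting with $\langle a,c\rangle$ then leaves exactly the elements with vanishing exponent sums, i.e.\ the commutator subgroup. Your explicit use of the coordinates $F(a,c)\times\langle b\rangle$ makes transparent the uniqueness of the decomposition $ub^k$, which the paper uses only implicitly.
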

\begin{proof}
    We begin by showing that \(\langle ab, c \rangle =
    \{u b^{\expsum_a(u)} \mid u \in \langle a, c \rangle\}\).

    To show the forward containment, let \(w \in \langle ab, c \rangle\). Pushing
    \(b\)s to the right in \(w\) gives a normal form \(w = ub^k\), where
    \(u \in \langle a, c \rangle\) and \(k \in \mathbb{Z}\). Then
    \[
        k = \expsum_b(ub^k) = \expsum_b(w) = \expsum_a(w) = \expsum_a(u),
    \]
    and so \(w = ub^{\expsum_a(u)}\). For the converse containment, let
    \(u \in \langle a, c \rangle\). Let \(\bar{u} \in \langle ab, c \rangle\)
    be the image of \(u\) under the map defined by \(a \mapsto ab\) and \(c \mapsto c\).
    Then \(\langle ab, c \rangle \ni \bar{u} =_{A(P_3)} ub^{\expsum_a(u)}\), as required.

    We have now showed that \(\langle ab, c \rangle =
    \{u b^{\expsum_a(u)} \mid u \in \langle a, c \rangle\}\); the fact that
    \(\langle a, cb \rangle =
    \{u b^{\expsum_c(u)} \mid u \in \langle a, c \rangle\}\) follows by symmetry. Note that
    \[
        \langle ab, c \rangle \cap \langle a, c \rangle = \{u b^{\expsum_a(u)} \mid u \in \langle a, c \rangle\} \cap \langle a, c \rangle
        = \{u \in \langle a, c \rangle \mid \expsum_a(u) = 0\}.
    \]
    By symmetry, \(\langle a, cb \rangle \cap \langle a, c \rangle = \{u \in \langle a, c
    \rangle \mid \expsum_c(u) = 0\}\). Thus
    \[
        \langle ab, c \rangle \cap \langle a, c \rangle \cap \langle a, cb \rangle
        = \{u \in \langle a, c
    \rangle \mid \expsum_a(u) = \expsum_c(u) = 0\} = [\langle a, c \rangle, \langle
    a, c \rangle]. \qedhere
    \] 
\end{proof}

We can now prove the main result of this section.

\begin{proof}[Proof of Theorem~\ref{thm:singleSubgroupConstraintsOneRelatorGroup}]
Let
\[
G = \langle c, q \mid c(qcq^{-1}) = (qcq^{-1})c \rangle.
\] 
Set 
$c_i = q^i c q^{-i}$. 
Then $G$ has the infinite presentation  
\[
G = \langle c_i \; (i \in \mathbb{Z}), q \mid 
qc_iq^{-1} = c_{i+1}, \;  
c_i c_{i+1} = c_{i+1} c_i \quad (i \in \mathbb{Z})
\rangle.
\]   
It follows that  
the subgroup of $G$ generated
by the set $\{c_i \mid i \in \Z\}$ is an infinite line right-angled Artin group with underlying
graph the standard Cayley graph of $\mathbb{Z}$. With this notation let 
\[
H = 
\langle 
c_0, c_2, 
c_4c_5, c_6, 
c_8, c_9c_{10}
\rangle. 
\] 
We claim that the Diophantine problem for $G$ with $H$-constraints 
is undecidable.  
Observe that:
\[
q^{-4}Hq^4 = 
\langle 
c_{-4}, c_{-2}, 
c_0c_1, c_2, 
c_4, c_5c_{6}
\rangle 
\] 
and 
\[
q^{-8}Hq^8 = 
\langle 
c_{-8}, c_{-6}, 
c_{-4}c_{-3}, c_{-2}, 
c_0, c_1c_{2}
\rangle. 
\] 
Now consider the intersection: \( H \cap q^{-4}Hq^4 \cap q^{-8}Hq^8\).

In any right-angled Artin group the intersection of two subgroups each
generated by some subset of the vertices is equal to the subgroup generated by
the intersection of that set of vertices; see
\cite[Lemma~3.1]{AntolinFoniqi22}. It follows that, working in the right-angled
Artin group on the infinite line $\{c_i \mid i \in \mathbb{Z} \}$ we have 
\[
\langle c_0, \ldots c_{10} \rangle \cap 
\langle c_{-4}, \ldots c_6 \rangle \cap 
\langle c_{-8}, \ldots c_2 \rangle 
= 
\langle c_0,c_1,c_2 \rangle. 
\]
Hence 
\[
H \cap q^{-4}Hq^4 \cap q^{-8}Hq^8 \leq \langle c_0,c_1,c_2 \rangle
\]
and thus 
\[
H \cap q^{-4}Hq^4 \cap q^{-8}Hq^8 = 
\langle c_0, c_2 \rangle \cap 
\langle c_0c_1, c_2 \rangle \cap 
\langle c_0, c_1c_2 \rangle.  
\]
Indeed, since  
\[
H \cap q^{-4}Hq^4 \cap q^{-8}Hq^8 \leq \langle c_0,c_1,c_2 \rangle
\]
it follows that 
\begin{align*} 
H \cap q^{-4}Hq^4 \cap q^{-8}Hq^8 & = 
H \cap q^{-4}Hq^4 \cap q^{-8}Hq^8 \cap \langle c_0,c_1,c_2 \rangle \\ 
& = 
(H \cap \langle c_0,c_1,c_2 \rangle) \cap 
(q^{-4}Hq^4  \cap \langle c_0,c_1,c_2 \rangle) \cap 
(q^{-8}Hq^8 \cap \langle c_0,c_1,c_2 \rangle). 
  \end{align*}
Now using the retraction from the subgroup $\langle \{ c_i: i \in \Z \} \rangle$ to the subgroup $\langle c_0, c_1, c_2 \rangle$ that fixes $c_0, c_1$ and $c_2$ and  sends every other $c_i \mapsto 1$, and applying Lemma~\ref{lem:retraction}, it follows that
\begin{align*} 
H  \cap \langle c_0,c_1,c_2 \rangle &= 
\langle 
c_0, c_2, 
c_4c_5, c_6, 
c_8, c_9c_{10}
\rangle
\cap \langle c_0,c_1,c_2 \rangle = \langle c_0, c_2 \rangle, \\
q^{-4}Hq^4  \cap \langle c_0,c_1,c_2 \rangle
& = 
\langle 
c_{-4}, c_{-2}, 
c_0c_1, c_2, 
c_4, c_5c_{6}
\rangle 
 \cap \langle c_0,c_1,c_2 \rangle = \langle c_0c_1, c_2 \rangle,  \\
q^{-8}Hq^8 \cap \langle c_0,c_1,c_2 \rangle
& = 
\langle 
c_{-8}, c_{-6}, 
c_{-4}c_{-3}, c_{-2}, 
c_0, c_1c_{2}
\rangle 
 \cap \langle c_0,c_1,c_2 \rangle = \langle c_0, c_1c_2 \rangle. 
\end{align*}

Hence 
\[
H \cap q^{-4}Hq^4 \cap q^{-8}Hq^8 = 
\langle c_0, c_2 \rangle \cap 
\langle c_0c_1, c_2 \rangle \cap 
\langle c_0, c_1c_2 \rangle,  
\]
as claimed.
But now, applying Lemma~\ref{Lem:DPSGAP3} we obtain 
\[
H \cap q^{-4}Hq^4 \cap q^{-8}Hq^8 = 
\langle c_0, c_2 \rangle \cap 
\langle c_0c_1, c_2 \rangle \cap 
\langle c_0, c_1c_2 \rangle = 
[\langle c_0, c_2 \rangle, \langle c_0, c_2 \rangle]. 
\]

From \cref{Bob-gp-centraliser}, we have \(\langle c_0, c_1, c_2 \rangle = C_G(c_1)\), and so \(\langle c_0, c_1, c_2 \rangle\) is equationally definable in \(G\), along with constraints of the form \(X \in [\langle c_0, c_2 \rangle,
\langle c_0, c_2 \rangle]\). Additionally, \([\langle c_0, c_1, c_2 \rangle, \langle c_0, c_1, c_2 \rangle] = [\langle c_0, c_2 \rangle, \langle c_0, c_2 \rangle]\) and so we can define \(\langle c_0, c_1, c_2 \rangle\) into \(G\), along with abelianisation constraints in \(\langle c_0, c_1, c_2 \rangle\). Thus if the Diophantine problem with \(H\)-constraints in \(G\) is decidable, then the Diophantine problem with abelianisation constraints is decidable in \(\langle c_0, c_1, c_2 \rangle \cong F_2 \times \Z\), which directly contradicts \cite[Theorem A]{CiobanuGarreta}. 

To complete the proof we need to prove that the group   
$H$
is isomorphic to a free group of rank $6$.    
For this first note that the subgroup 
$K = \langle c_j \mid 0 \leq j \leq 10 \rangle$
is isomorphic to the RAAG with defining graph a line $\Gamma$ with 11 vertices
and 10 edges. Now there is an automorphism $\Psi$ of $K$ mapping $c_4 \mapsto
c_4c_5$, $c_{10} \mapsto c_9c_{10}$ and fixing every other vertex since this is
an example of a $\Gamma$-legal transvection in the sense of e.g.
\cite[Section~2.5]{Vogtmann15}. Since $\Psi$ is an automorphism of the RAAG
$A(\Gamma)$ it follows that the subgroup of $A(\Gamma)$ generated by $\{ c_0,
c_2, c_4, c_6, c_8, c_{10} \}$ is isomorphic to the subgroup generated by  $\{
c_0, c_2, c_4c_5, c_6, c_8, c_9c_{10} \}$. But since it is a standard parabolic
subgroup, the subgroup of $A(\Gamma)$ generated by $\{ c_0, c_2, c_4, c_6, c_8,
c_{10} \}$ is the free group of rank 6 by e.g. \cite[Lemma 3.1]{AntolinFoniqi22}. This completes the proof that $H$ is
isomorphic to a free group of rank $6$.     
\end{proof}

\begin{rmk} It is natural to ask whether \cref{thm:singleSubgroupConstraintsOneRelatorGroup} might extend to all Hydra groups \cite{HydraGroups} -- showing that the Diophantine problem with single subgroup constraints in any Hydra group is undecidable. For example one can ask whether the result holds for the following group: 
\[
L = \langle 
x,a,b,s \mid sxs^{-1} = xa, sas^{-1} = ab, sbs^{-1}=b 
\rangle.
\]
Now a key step in the proof of \cref{thm:singleSubgroupConstraintsOneRelatorGroup} for the group 
\[
\langle 
a,b,s \mid sas^{-1} = ab, sbs^{-1}=b 
\rangle.
\]
is \cref{Bob-gp-centraliser} which in that notation shows (up to conjugation by $q$) that $C(c) = \langle q^{-1} c q, c, qcq^{-1} \rangle$ which in the notation of the presentation above shows $C(s) = \langle s, b, ab^{-1}a^{-1} \rangle$. This relates to the fact that the fixed point subgroup of the automorphism $a \mapsto ab$, $b \mapsto b$ of the free group on $\{a,b\}$ is rank two and generated by $\{b, ab^{-1}a^{-1} \}$. The same proof should work for the group $L$ if it is also true in $L$ that $C(s) = \langle s, b, ab^{-1}a^{-1} \rangle$ which seems plausible. Related to this is the question of what is the rank of the fixed point subgroup of the automorphism of $F(x,a,b)$ mapping $x \mapsto xa$, $a \mapsto ab$ and $b \mapsto b$.      
  \end{rmk}

\subsection*{Multiple subgroup constraints: negative and positive results}
We end this section by making some remarks about the Diophantine problem with multiple subgroup constraints in one-relator groups. It is immediate from the definition that if this problem is decidable for a group $G$ and $H \leq G$ then it is also decidable for $H$ (since we can add membership in $H$ as an additional constraint).   

\subsubsection*{Undecidability results} A helpful lemma for analysing this is a result that goes back to work of Muscholl \cite{MuschollThesis}. To keep our paper self-contained, and also since the argument was needed in the proof of Theorem~\ref{thm:singleSubgroupConstraintsOneRelatorGroup}, we give short proof of Muscholl's result here, using an alternative proof that goes via abelianisation constraints.  

\begin{lem}\label{lem:AP3DPG} In the group \(A(P_3) = \langle a, b, c \mid ab = ba, bc = cb \rangle\) the Diophantine problem with subgroup constraints in the three subgroups
\[
  \langle ab, c \rangle,
  \langle a, bc \rangle,
  \langle a, c \rangle 
\]
is undecidable. Hence the Diophantine problem with subgroup constraints is undecidable in any group $G$ that embeds the group $A(P_3) = F_2 \times \Z$.  
  \end{lem}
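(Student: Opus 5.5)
The plan is to reduce from the Diophantine problem with abelianisation constraints in $F_2 \times \Z$, which is undecidable by \cite[Theorem A]{CiobanuGarreta}. First I will use \cref{Lem:DPSGAP3}, which says that
\[
\langle ab, c \rangle \cap \langle a, bc \rangle \cap \langle a, c \rangle = [\langle a, c \rangle, \langle a, c \rangle].
\]
So a constraint $X \in [\langle a, c\rangle, \langle a, c\rangle]$ can be expressed by the three subgroup constraints $X \in \langle ab, c\rangle$, $X \in \langle a, bc\rangle$ and $X \in \langle a, c\rangle$ imposed on the same variable.

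Next I need to pass from commutator-subgroup constraints on $\langle a,c\rangle$ to abelianisation constraints in the whole group $A(P_3) = \langle a, c\rangle \times \langle b\rangle \cong F_2 \times \Z$. Since $b$ is central, $[A(P_3),A(P_3)] = [\langle a,c\rangle,\langle a,c\rangle]$. Hence a constraint $X \in [A(P_3), A(P_3)]$ is exactly the conjunction of the three subgroup constraints above. Given any system of equations in $A(P_3)$ with abelianisation constraints, I will therefore produce an equivalent system with constraints only in the three listed subgroups. This gives a computable reduction.

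If the version of \cite[Theorem A]{CiobanuGarreta} uses a different form of abelianisation constraint, I will adapt the reduction. For example, it might use constraints of the form ``the image of $X$ in the abelianisation satisfies a linear condition''. Such conditions can typically be rewritten using extra variables and equations together with membership in $[G,G]$. Concretely, $\phi(X) = \phi(Y)$ in $G^{ab}$ becomes $XY^{-1} \in [G,G]$, and exponent-sum linear relations are realised by equations like $X = a^{i}b^{j}c^{k}Z$ with $Z \in [G,G]$. I expect matching the precise form of that cited theorem to be the only delicate point, and it is mainly bookkeeping.

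Finally, for the ``hence'' part, suppose $A(P_3) \leq G$. A system in $A(P_3)$ with constraints in the three subgroups is a system in $G$ with finitely generated subgroup constraints. To keep the solutions inside $A(P_3)$, I will add the constraint $X \in A(P_3)$ for every variable, which is itself a finitely generated subgroup constraint; coefficients from $A(P_3)$ are written as words in $G$'s generators. Solvability is then preserved in both directions, so decidability for $G$ would give decidability for $A(P_3)$, a contradiction.
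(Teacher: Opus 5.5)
Your proposal is correct and matches the paper's proof. It uses that $b$ is central to get $[A(P_3),A(P_3)] = [\langle a,c\rangle,\langle a,c\rangle]$, identifies this with the triple intersection from \cref{Lem:DPSGAP3} so that each abelianisation constraint becomes three subgroup constraints, and derives a contradiction with \cite[Theorem A]{CiobanuGarreta}; your explicit treatment of the ``hence'' clause (adding $X \in A(P_3)$ for every variable) fills in a step the paper leaves implicit, and your contingency about other forms of abelian predicates is not needed, since the paper's abelianisation constraints are exactly membership in $[G,G]$.
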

\begin{proof} Suppose seeking a contradiction that the Diophantine problem with subgroup constraints in the three subgroups
\[
  \langle ab, c \rangle,
  \langle a, bc \rangle,
  \langle a, c \rangle 
\]
is decidable in $A(P_3)$. It follows from Lemma~\ref{Lem:DPSGAP3} that 
\[
\langle ab, c \rangle \cap
    \langle a, bc \rangle \cap \langle a, c \rangle = [\langle a, c \rangle,
    \langle a, c \rangle].
\]
Additionally, since $b$ is central in the group we have  \([\langle a,b,c \rangle, \langle a,b,c \rangle] = [\langle a, c \rangle, \langle a, c \rangle]\). Hence the Diophantine problem with abelianisation constraints is decidable in $A(P_3)$ but this contradicts \cite[Theorem A]{CiobanuGarreta}. \end{proof}

This lemma has some consequences about the Diophantine problem with subgroup constraints that are worth recording. It is known that every one-relator group with centre is virtually $F_n \times \Z$; 
see \cite{Pietrowski1974} and  \cite[Proposition 4.1]{levitt2015generalized}.
Hence this immediately gives:    

\begin{cor} A one-relator group with centre has an undecidable Diophantine problem with subgroup constraints, unless it is virtually abelian. \end{cor}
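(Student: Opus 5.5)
The plan is to combine the structure theorem for one-relator groups with centre with \cref{lem:AP3DPG}. By \cite{Pietrowski1974} and \cite[Proposition 4.1]{levitt2015generalized}, a one-relator group \(G\) with non-trivial centre is virtually \(F_n \times \Z\) for some \(n \geq 0\). So fix a finite index subgroup \(L \leq G\) with \(L \cong F_n \times \Z\). The proof then splits according to the value of \(n\).

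If \(n \leq 1\), then \(L\) is \(\Z\) or \(\Z^2\), so \(G\) is virtually abelian. This case is excluded by the hypothesis.

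If \(n \geq 2\), I will exhibit an embedded copy of \(A(P_3) = \langle a, b, c \mid ab = ba, bc = cb \rangle \cong F_2 \times \Z\) inside \(G\). Choose two free generators \(x, y\) of \(F_n\) and a generator \(z\) of the \(\Z\) factor. The subgroup \(\langle x, y \rangle \times \langle z \rangle\) is then isomorphic to \(F_2 \times \Z\) via \(a \mapsto x\), \(b \mapsto z\), \(c \mapsto y\). Indeed, \(F_2 \times \Z\) is precisely the RAAG on the path \(P_3\) with \(b\) as the central vertex. This gives an embedding \(A(P_3) \hookrightarrow L \leq G\).

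The second sentence of \cref{lem:AP3DPG} now applies directly. The three subgroups \(\langle ab, c\rangle\), \(\langle a, bc\rangle\), \(\langle a, c\rangle\) are finitely generated subgroups of \(G\), and their images under the embedding can be used as subgroup constraints in \(G\). Hence the Diophantine problem with subgroup constraints in \(G\) is undecidable.

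There is no real obstacle here. The only care needed is in the bookkeeping of the dichotomy: the cited structure result must indeed give \(F_n \times \Z\) with \(n \geq 2\) whenever \(G\) is not virtually abelian. This holds because \(F_0 \times \Z\) and \(F_1 \times \Z\) are both abelian, so "not virtually abelian" forces \(n \geq 2\).
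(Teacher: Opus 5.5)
Your proof is correct and follows the paper's argument exactly: the Pietrowski/Levitt structure result gives a finite-index subgroup $F_n \times \Z$, failing to be virtually abelian forces $n \geq 2$ and hence an embedded $A(P_3) \cong F_2 \times \Z$, and the second sentence of \cref{lem:AP3DPG} finishes the proof. (The finite cyclic groups $\langle a \mid a^m \rangle$ are not literally virtually $F_n \times \Z$, but they are virtually abelian, so the hypothesis already excludes them.)
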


Lemma~\ref{lem:AP3DPG} can be used to classify the Artin groups with decidable Diophantine problem with subgroup constraints.   

\begin{theorem}\label{thm:ArtinGroup}
Let \(G\) be a finitely generated Artin group. Then the following are equivalent:
    \begin{enumerate}
      \item \(G\) is a free product of finitely many free abelian groups;
      \item \(G\) has decidable existential theory with rational constraints;
      \item \(G\) has decidable Diophantine problem with subgroup constraints.
  \end{enumerate} 
  \end{theorem}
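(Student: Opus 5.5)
The plan is to prove the cycle of implications (1)\(\Rightarrow\)(2)\(\Rightarrow\)(3)\(\Rightarrow\)(1). The first two are quick. For (1)\(\Rightarrow\)(2): free abelian groups have decidable existential theory with rational constraints, since rational subsets of \(\Z^n\) are semilinear and the problem reduces to Presburger arithmetic. The class of groups with decidable existential theory with rational constraints is closed under free products; this is a result of Diekert and Lohrey, which I would cite. For (2)\(\Rightarrow\)(3): finitely generated subgroups are rational subsets, and the Diophantine problem is a fragment of the positive existential theory, so (3) follows immediately.

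The substance is (3)\(\Rightarrow\)(1), which I would prove by contraposition. Let \(G = A_\Gamma\) be the Artin group on a finite labelled graph \(\Gamma\). As remarked before Lemma~\ref{lem:AP3DPG}, decidability of the Diophantine problem with subgroup constraints passes to finitely generated subgroups. So by Lemma~\ref{lem:AP3DPG} it suffices to show the following: if \(G\) is not a free product of free abelian groups, then \(G\) contains a copy of \(F_2 \times \Z = A(P_3)\). I would first note that when every edge of \(\Gamma\) has label \(2\), \(G\) is a RAAG, and it is a free product of free abelian groups exactly when every connected component of \(\Gamma\) is complete.

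This leads to a case split. In the first case, some edge has label \(m \geq 3\). The standard parabolic subgroup on that edge is the dihedral Artin group \(A_m = \langle x, y \mid \Pi(x,y;m) = \Pi(y,x;m)\rangle\), and parabolic subgroups embed by van der Lek. The group \(A_m\) has infinite cyclic centre \(Z\), generated by \(\Delta\) or \(\Delta^2\), and it is virtually \(F_k \times \Z\) with \(k \geq 2\); for instance \(A_3\) is the trefoil group and \(A_m/Z\) is virtually free and non-elementary. Taking a free subgroup \(F_2\) of a finite-index subgroup that meets \(Z\) trivially, together with \(Z\), gives \(F_2 \times \Z \leq A_m\). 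In the second case, all labels are \(2\) and some component of \(\Gamma\) is not complete. Then that component contains an induced path on three vertices \(u - v - w\) with \(u, w\) non-adjacent, obtained by taking a shortest path between two non-adjacent vertices. Its standard parabolic subgroup is \(A(P_3)\), which embeds.

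I expect the main obstacle to be making the first case rigorous and cited cleanly. There are two points: the embedding of standard parabolic subgroups for arbitrary Artin groups (van der Lek), and the structure of dihedral Artin groups as central extensions of virtually free groups. For the latter I would argue concretely. One option is that \(\langle x, \Delta^2\rangle\) together with a conjugate of \(x\) generates \(F_2 \times \Z\). The simpler option is that \(A_m/Z\) is \(\Z/p * \Z/q\) or a similar non-elementary virtually free group. In that case the preimage of a free subgroup \(F_2 \leq A_m/Z\) splits as \(F_2 \times Z\), because the extension of a free group by \(\Z\) is split and central.
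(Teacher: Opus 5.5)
Your argument for (3)$\Rightarrow$(1) is essentially the paper's. You reduce via Lemma~\ref{lem:AP3DPG} to embedding $A(P_3)$, then treat a dihedral parabolic subgroup with label $m \geq 3$ (virtually $F_k \times \Z$) and an induced $P_3$ in a non-complete component of a RAAG; your appeal to van der Lek and the split central extension over a free subgroup of $A_m$ modulo its centre make explicit what the paper leaves implicit or cites from Ciobanu--Holt--Rees. Your (1)$\Rightarrow$(2) step is also sound, and it fills a gap, since the paper's proof only treats (2)$\Rightarrow$(3) and (3)$\Rightarrow$(1); for closure under free products, the theorem of Lohrey and S\'enizergues on amalgamation over finite (here trivial) subgroups, which the paper already cites, is a safe reference.
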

\begin{proof} 
	\((2) \implies (3)\): Immediate.

	\((3) \implies (1)\): By \cref{lem:AP3DPG}, any group that contains \(A(P_3)\)
	has an undecidable Diophantine problem with subgroup constraints. So it
	suffices to show that \(A(P_3)\) embeds into any Artin group that is not a
	free product of finitely many free abelian groups. We first consider the
	case of dihedral (two-generated) Artin groups. 

	Consider a (non-RAAG) dihedral Artin group \(G = \langle a, b \mid
	\underbrace{aba \cdots}_{m} = \underbrace{bab \cdots}_m \rangle\), where
	\(m \geq 2\). It is well-known that \(G\) is admits \(F_n \times
	\mathbb{Z}\) as a finite index subgroup, for some \(n \geq 2\) (the exact
	value of which depends on \(m\). For a brief proof of this fact, see for
	example \cite[Section 2]{CiobanuHoltRees}. We have thus shown that all
	non-RAAG dihedral Artin groups contain a copy of \(A(P_3)\), and so any
	non-RAAG Artin group contains a copy of \(A(P_3)\). Thus the only Artin
	groups with a decidable Diophantine problem with subgroup constraints are
	RAAGs. Naturally, any RAAG whose underlying graph is not a disjoint union
	of complete graphs will contain \(A(P_3)\) as a parabolic subgroup, and
	thus cannot have a decidable Diophantine problem with subgroup constraints.
	Thus an Artin group with a decidable Diophantine problem with subgroup
	constraints has an underlying graph that is the disjoint union of
	complete graphs; that is, it is a free product of free abelian groups.
\end{proof}

\begin{rmk} 
The analogous result to Theorem~\ref{thm:ArtinGroup} but for Artin monoids also holds, and can be proved using similar methods.  
  \end{rmk}

Lemma~\ref{lem:AP3DPG} also helps make some initial steps towards classifying
free-by-cyclic groups with decidable Diophantine problem with subgroup
constraints as we now explain. By a free-by-cyclic group we mean one of the
form $F_n \rtimes \Z$ for some $n \geq 2$. It is currently an open problem
whether all free-by-cyclic groups have decidable Diophantine problem. On the
other hand, it is know that every hyperbolic free-by-cyclic group does have a
decidable Diophantine problem, since all hyperbolic groups do.
Outer automorphisms of a finitely generated, rank $n$ free group $F_n$ come in two kinds according to the growth rate of conjugacy classes of $F_n$. The names for these two types is exponential and polynomial. The following result relates the two cases:    

\begin{theorem} Suppose $\Phi \colon F_n \rightarrow F_n$ grows exponentially. Then $G_\Phi$ is hyperbolic relative to a finite collection of subgroups, each of the form $F \rtimes_{\psi} \Z$ for a finite rank free group $F$ and a polynomially growing automorphism $\psi: F \rightarrow F$. \end{theorem}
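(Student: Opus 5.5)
The approach is to build the peripheral structure out of the maximal polynomially growing subgroups of \(\Phi\), and then prove relative hyperbolicity of \(G_\Phi = F_n \rtimes_\Phi \Z\) by a flaring/combination argument. This is modelled on the absolute case, where Brinkmann's theorem together with the Bestvina--Feighn combination theorem shows that \(G_\Phi\) is hyperbolic when \(\Phi\) is atoroidal. Throughout I write \(t\) for the stable letter of the mapping torus.

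\emph{Step 1: reduce to a power of \(\Phi\).} For \(m \geq 1\), the mapping torus \(G_{\Phi^m}\) is a finite index subgroup of \(G_\Phi\). By a result of Dru\c{t}u, relative hyperbolicity is a quasi-isometry invariant, and the peripheral structure is controlled up to finite Hausdorff distance. So it suffices to prove the statement for a suitable power of \(\Phi\) and then transport the peripheral subgroups back to \(G_\Phi\). I will replace \(\Phi\) by a rotationless power (in the sense of Feighn--Handel), so that it admits a completely split relative train track (CT) representative \(f \colon \Gamma \to \Gamma\).

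\emph{Step 2: identify the peripheral subgroups.} Following Levitt, I will use that there are finitely many conjugacy classes of maximal finitely generated subgroups \(P_1, \ldots, P_k \leq F_n\) with the following property: \(\Phi(P_i) = g_i P_i g_i^{-1}\) for some \(g_i \in F_n\), and the induced automorphism \(\psi_i = \mathrm{ad}_{g_i}^{-1} \circ \Phi|_{P_i}\) of \(P_i\) grows polynomially. Every conjugacy class that grows polynomially under \(\Phi\) is carried by some \(P_i\), and the family \(\{P_i\}\) is malnormal in \(F_n\). The candidate peripheral subgroups are then
\[
Q_i = \langle P_i, g_i^{-1} t \rangle \cong P_i \rtimes_{\psi_i} \Z,
\]
which have exactly the required form \(F \rtimes_\psi \Z\) with \(F\) free of finite rank and \(\psi\) polynomially growing. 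In terms of the CT \(f\), the \(P_i\) are the fundamental groups of the maximal subgraphs (after collapsing) consisting of non-exponentially-growing strata that are not crossed by EG strata in an essential way.

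\emph{Step 3: prove relative hyperbolicity.} I will cone off (electrify) the lifts of the \(Q_i\) in the tree-of-spaces picture of \(G_\Phi\) over the Bass--Serre line of \(t\), and verify two things:
\begin{enumerate}
\item the Mj--Reeves (or Gautero--Lustig) annuli flare condition for the electrified space;
\item bounded coset penetration, which follows from the malnormality of \(\{P_i\}\).
\end{enumerate}
For the flare condition, the key input is a relative version of Brinkmann's lemma: there exist \(\lambda > 1\) and \(M\) such that for every cyclically reduced conjugacy class \(w\) not carried by any \(P_i\),
\[
\lambda \, \|w\|_{\mathrm{rel}} \leq \max\bigl\{ \|\Phi^{M}(w)\|_{\mathrm{rel}},\; \|\Phi^{-M}(w)\|_{\mathrm{rel}} \bigr\},
\]
where \(\|\cdot\|_{\mathrm{rel}}\) is length with the \(P_i\)-pieces collapsed. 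The analogous statement is also needed for reduced paths, up to bounded cancellation.

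I would prove this by splitting \(f^{k}(w)\) into its complete splitting. The EG terms grow exponentially by the train track property, which for \(\Phi^{-1}\) is applied to a CT representative of the inverse. The NEG and linear terms lie in the \(P_i\) and contribute nothing to \(\|\cdot\|_{\mathrm{rel}}\). The bounded cancellation lemma controls the interaction between adjacent terms.

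The main obstacle is the uniformity in this flaring statement. Specifically, one must rule out paths that are not carried by any \(P_i\) but have arbitrarily long pieces that look polynomial in both the forward and backward directions. I would handle this first by a Brinkmann-style compactness argument on the space of laminations. The argument shows that a failure of uniform flaring would produce a leaf of an attracting lamination of \(\Phi\) that is also a leaf of an attracting lamination of \(\Phi^{-1}\), or else a line carried by some \(P_i\). The first option is impossible because attracting and repelling laminations are dual, and the second is consistent with relative flaring after electrification.
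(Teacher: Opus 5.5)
The paper gives no proof of this statement; it attributes it to Gautero--Lustig, Ghosh and Dahmani--Li. Your plan follows the general strategy of the first two: electrify the mapping tori of Levitt's maximal polynomially growing subgroups, then check the Mj--Reeves flaring hypotheses via a relative Brinkmann inequality. The gap is that the uniform relative flaring inequality, which carries essentially all of the content, is not established, and the mechanism you propose for it rests on incorrect claims about CTs.

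\textbf{The dichotomy for splitting terms.} ``EG terms grow exponentially, NEG and linear terms lie in the $P_i$'' fails in both directions.
\begin{enumerate}
\item A non-fixed NEG edge of a CT satisfies $f(E)=Eu$ with $u$ a closed path in lower strata, and $u$ may cross EG strata (e.g.\ $a\mapsto ab$, $b\mapsto bab$, $c\mapsto ca$ on $F_3$). So NEG edge terms can grow exponentially and are not absorbed into peripheral pieces.
\item Conversely, the $P_i$ need not be carried by subgraphs of NEG strata. For $a\mapsto ab$, $b\mapsto bab$ on $F_2$ (a pseudo-Anosov of the once-punctured torus) there are no NEG strata. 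Yet $[a,b]$ is fixed, and $P_1=\langle [a,b]\rangle$ is carried by a closed indivisible Nielsen path of EG height.
\item Complete splittings also contain EG indivisible Nielsen paths as terms. These do not grow, and your accounting omits them.
\end{enumerate}

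\textbf{Uniformity.} A circuit $w$ becomes completely split only after a number of iterates that depends on $w$ and is unbounded over all $w$, and its length can shrink in the meantime. So complete splittings give nothing with a fixed $M$. Nor can growth under $\Phi^{-1}$ be read off the same splitting, since a CT for $\Phi^{-1}$ lives on a different marked graph.

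\textbf{The compactness argument.} This is meant to supply the uniformity, but it does not close. In a proof by contradiction every alternative must be contradictory, and ``the limit line is carried by some $P_i$'' is not. It is in fact the expected outcome. A sequence $w_k$ violating relative flaring can consist of very long peripheral pieces joined by short bridges. Relative length ignores those long pieces, but weak limits in the space of lines are dominated by them.

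What is missing is a genuinely relative attraction statement that controls the non-peripheral part of $w$ uniformly. One option is to build it from Handel--Mosher's weak attraction theory, applied to all lamination pairs of $\Phi$ at once. You also need uniform growth of bridges between distinct peripheral cosets. That is the separate Mj--Reeves hypothesis that cone-bounded hallways strictly flare, which your outline does not mention.

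\textbf{Two smaller points.}
\begin{enumerate}
\item Bounded coset penetration is an output of the Mj--Reeves theorem, not something you verify from malnormality of $\{P_i\}$. Malnormality and quasiconvexity only give relative hyperbolicity of the vertex group $F_n$.
\item Dru\c{t}u's quasi-isometry invariance requires the peripheral subgroups to be non-relatively hyperbolic, which you must justify here. Alternatively, bypass it using that the peripheral structure of $G_{\Phi^m}$ is invariant under conjugation by $t$.
\end{enumerate}
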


The theorem above follows from the papers: \cite{gautero2007mapping},
\cite{Ghosh23} and \cite{DahmaniRuoyu}. Combined with results about the
Diophantine problem for relatively hyperbolic groups \cite{Dahmani}, 
these results suggest that a key step for proving that all free-by-cyclic groups have decidable
Diophantine problems would be to first resolve it in the polynomial case. We can
apply Lemma~\ref{lem:AP3DPG} to show that if we add subgroup constraints the
problem is always undecidable in the polynomial case.  

\begin{theorem}\label{thm:FBCPolynomial}
Let $G=F_n \rtimes_\varphi \mathbb{Z}$ for some $n \geq 2$. If \(\varphi\) is polynomially growing, then \(G\) has undecidable Diophantine problem with subgroup constraints.
\end{theorem}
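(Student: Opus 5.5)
The plan is to show that $G$ contains a copy of $A(P_3) \cong F_2 \times \Z$ and then apply Lemma~\ref{lem:AP3DPG}. That lemma says that any group embedding $A(P_3)$ has undecidable Diophantine problem with subgroup constraints. So the whole proof reduces to a structural statement about polynomially growing automorphisms.

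The key input is the standard fact that a polynomially growing automorphism of $F_n$, with $n \geq 2$, has a positive power $\varphi^k$ that fixes a non-trivial element $w \in F_n$, up to replacing $\varphi$ by a representative of the same outer class. This comes from the theory of (improved) relative train tracks. A UPG representative of a polynomially growing outer automorphism has a bottom stratum that is a fixed edge, or more generally a non-trivial fixed subgroup. Changing the representative within the outer class only changes $G$ up to isomorphism, since $F_n \rtimes_\varphi \Z \cong F_n \rtimes_{\iota_g\varphi}\Z$. Moreover, $F_n \rtimes_{\varphi^k} \Z$ is a finite-index subgroup of $G$. Because decidability of the Diophantine problem with subgroup constraints passes to subgroups (as remarked before Lemma~\ref{lem:AP3DPG}), it suffices to find $A(P_3)$ inside $F_n \rtimes_{\varphi^k}\Z$. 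Alternatively, it suffices to find $A(P_3)$ inside $G$ directly, since $F_n \rtimes_{\varphi^k}\Z$ embeds in $G$.

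The main obstacle is that I need more than a single fixed element. Write $t$ for the stable letter. If $\varphi^k(w) = w$, then $\langle w, t\rangle \cong \Z^2$, but that alone does not give $F_2 \times \Z$. I would aim for one of two configurations.

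\begin{enumerate}
\item Two elements $u, v \in F_n$ generating a free group of rank $2$, both fixed by a common power $\varphi^k$. Then $\langle u, v, t^k\rangle \cong F_2 \times \Z$, because $t^k$ commutes with $u$ and $v$, and $\langle u, v\rangle \cap \langle t^k\rangle = 1$ (which can be seen via the projection to $\Z$).
\item A fixed element $w$ together with an element $x$ satisfying $\varphi^k(x) = x w^m$, the linear-growth case. Then $t^k$ commutes with $w$, and $x^{-1} t^k x = w^{-m}t^k$ in suitable conventions, so $\langle w, t^k, x t^k x^{-1}\rangle$ already produces a $P_3$-pattern: two commuting pairs sharing a middle vertex $t^k$ or $w$. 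I would verify that no further relations hold by a normal-form or Bass--Serre argument, using the HNN/graph-of-groups structure coming from the fixed edge.
\end{enumerate}

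Concretely, in a UPG train track the lowest two strata give either two independent fixed loops, which is configuration 1, or a fixed loop $w$ plus an edge $E$ with $E \mapsto E w^m$, which is configuration 2. The rank-$2$ fixed subgroup case is the easiest.

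To keep things robust, I would first try configuration 1 via the known result that a polynomially growing automorphism of $F_n$ with $n \geq 2$ has a power whose fixed subgroup has rank at least $2$. This is true for UPG automorphisms, since the fixed subgroup of a non-trivial UPG automorphism contains the fixed loops at the bottom, and in rank $\geq 2$ one can arrange two of them. If $\varphi$ has finite order, this is immediate: some power is the identity, so $F_n \times \Z$ is a finite-index subgroup. Once $A(P_3) \leq G$ is established, Lemma~\ref{lem:AP3DPG} finishes the proof.
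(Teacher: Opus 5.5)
Your route differs from the paper's. The paper constructs nothing by hand. It quotes \cite[Theorem~3.4]{kudlinska2024subgroup}: a polynomially growing $F_n\rtimes_\varphi\Z$ is either virtually $F_n\times\Z$ or contains $A(P_4)$. Both alternatives contain $A(P_3)$, and Lemma~\ref{lem:AP3DPG} applies. You instead look for a power $\varphi^k$ and a representative $\Phi=\iota_g\circ\varphi^k$ with $\Fix(\Phi)$ of rank at least $2$. You then take $\langle u,v\rangle\times\langle gt^k\rangle\cong F_2\times\Z$ with $u,v\in\Fix(\Phi)$. That reduction is sound, and your projection-to-$\Z$ argument for the direct product is fine. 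It gives an explicit $F_2\times\Z$ and uses train-track theory instead of Kudlinska's dichotomy. However, it moves the whole content of the theorem into the fixed-subgroup claim, and your justification of that claim is not correct as written.

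\textbf{The train-track sketch.} It is not true that the lowest strata of a UPG train track give two fixed loops, or a fixed loop plus an edge $E\mapsto Ew^m$. The bottom strata can be fixed edges forming a forest, or rank-one pieces in different components. Nor can one in general ``arrange two'' fixed loops. For $a\mapsto a$, $b\mapsto ba$ there is a single fixed loop, and the rank-two fixed subgroup $\langle a,bab^{-1}\rangle$ comes from the non-fixed edge $b$.

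\textbf{Configuration 2.} This is not a separate case needing a normal-form or Bass--Serre check. Suppose $\Phi(w)=w\neq 1$ and $\Phi(x)=xw^m$ with $m\neq 0$. Then $\Phi(xwx^{-1})=xwx^{-1}$. Also $x$ does not commute with $w$: otherwise $x,w$ are powers of a root $r$ with $\Phi(r)=r$, which forces $m=0$. So $\langle w,xwx^{-1}\rangle\leq\Fix(\Phi)$ is free of rank $2$. Indeed, with $T$ the element inducing $\Phi$, one has $xTx^{-1}=xw^{-m}x^{-1}T$, so $\langle w,T,xTx^{-1}\rangle=\langle w,xw^{-m}x^{-1}\rangle\times\langle T\rangle$.

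\textbf{How to close the gap.} Pass to a UPG power and take the Bestvina--Feighn--Handel representative $f\colon G\to G$. Each stratum is one edge $E_i$ with $f(E_i)=E_iu_i$, where $u_i$ is a closed path in $G_{i-1}$, and all vertices are fixed. Let $j$ be the first index at which some component of $G_j$ has rank $2$. By induction, every component of $G_{j-1}$ has rank at most $1$, and $f_\#$ fixes all its closed paths. This holds because edges in tree components are fixed, and a $u_i$ lying in a rank-one component commutes with that component's fundamental group. There are then two cases.
\begin{enumerate}
\item $E_j$ runs from $x$ in a rank-one component $D$ to $y$ in a different rank-one component $C$. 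Then a generator $\delta$ of $\pi_1(D,x)$ and $E_j\sigma E_j^{-1}$, with $\sigma$ generating $\pi_1(C,y)$, are both fixed at $x$, and they freely generate a rank-two subgroup.
\item $E_j$ adds a loop to a single rank-one component. Composing $f_{\#,y}$ with a suitable inner automorphism puts you exactly in configuration 2.
\end{enumerate}
With this argument, or a citation for it, supplied, your proof is a valid alternative to the paper's.
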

\begin{proof} It follows from \cite[Theorem~3.4]{kudlinska2024subgroup} 
that any such group is either virtually a direct product $F_n \times \Z$ or else it contains $A(P_4)$. In both cases $G$ embeds $A(P_3)$ and now the result follow from Lemma~\ref{lem:AP3DPG}. \end{proof}
 
\subsubsection*{Decidability results} It has recently been proved 
\cite{linton2025geometry} that a finitely generated one-relator group $\langle A \mid w=1 \rangle$ is locally quasi-convex hyperbolic if and only if $\pi(w) \neq 2$, where $\pi(w)$ denotes the primitivity rank of $w$ (see that paper for the definition of primitivity rank). 
One corollary of that result is that any one-relator group with torsion is locally quasi-convex hyperbolic. We note that Puder provided an algorithm to compute the primitivity rank \cite{puder2014primitive}. 
Combining Linton's result with other results in the literature \cite{dahmani_guirardel} gives: 

\begin{proposition} Let $G = \langle A \mid w=1 \rangle$ be a finitely generated one-relator group with primitivity rank $\pi(w) \neq 2$. Then $G$ the Diophantine problem with subgroup constraints is decidable in $G$. 
In particular every one-relator group with torsion and every 
hyperbolic surface group has decidable Diophantine problem with subgroup constraints.
\end{proposition}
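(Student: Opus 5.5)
The plan is to combine Linton's characterisation \cite{linton2025geometry} with the decidability results of Dahmani and Guirardel \cite{dahmani_guirardel} for hyperbolic groups with quasi-isometrically embeddable rational constraints. First, since $\pi(w) \neq 2$, \cite{linton2025geometry} tells us that $G$ is hyperbolic and locally quasi-convex, so every finitely generated subgroup $H \leq G$ is quasi-convex. A quasi-convex subgroup of a hyperbolic group, viewed as a subset, is a quasi-isometrically embeddable rational subset. We would make this explicit by noting that if $H = \langle h_1,\dots,h_k\rangle$, then $H$ is the rational set $\{h_1^{\pm1},\dots,h_k^{\pm1}\}^*$. Quasi-convexity says that the inclusion of $H$, equipped with the word metric for the $h_i$, is a quasi-isometric embedding, and this is exactly the hypothesis required in \cite{dahmani_guirardel}.

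Second, given a system of equations with constraints $X_j \in H_j$, where the $H_j$ are finitely generated subgroups specified by finite generating sets, we feed it into the algorithm of \cite{dahmani_guirardel}. That algorithm decides the existential theory, and in particular the Diophantine problem, with quasi-isometrically embeddable rational constraints. We need the input to be given effectively: the rational constraints should be presented by finite automata over the generators of $G$, which the generating sets provide directly. We should also check that the quasi-isometry constants need not be supplied, or can be computed. For quasi-convex subgroups of hyperbolic groups the quasi-convexity constant can be found by a semi-algorithm that terminates because quasi-convexity is known to hold. Alternatively, we can note that \cite{dahmani_guirardel} only needs the constraint to be quasi-isometrically embedded, not explicit constants. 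I expect this effectivity point to be the main thing to verify, and I would settle it by citing the precise formulation in \cite{dahmani_guirardel}.

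Finally, the two special cases. If $G$ has torsion then $w = u^m$ with $m \geq 2$, and then $\pi(w) = \infty \neq 2$, which is the stated corollary of \cite{linton2025geometry}. A hyperbolic surface group has a presentation with a single relator, $\prod [a_i,b_i]$ or $\prod a_i^2$ of genus at least $2$. Surface groups are hyperbolic and locally quasi-convex, since finitely generated subgroups of surface groups are geometrically finite, so $\pi(w) \neq 2$ by Linton's equivalence. Alternatively, we can apply the argument above directly using local quasi-convexity, bypassing primitivity rank.
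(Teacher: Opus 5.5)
Your proposal is correct and takes the same route as the paper. The paper combines Linton's theorem (hyperbolicity and local quasi-convexity when $\pi(w)\neq 2$) with the algorithm of \cite{dahmani_guirardel} for quasi-isometrically embeddable rational constraints, and your effectivity discussion supplies detail the paper leaves implicit. Two small corrections. First, a proper power $w=u^m$ with $|m|\geq 2$ has $\pi(w)=1$ (witnessed by $\langle u\rangle$), not $\infty$, which is the value for primitive $w$; the conclusion $\pi(w)\neq 2$ is unaffected. Second, the automaton $\{h_1^{\pm1},\dots,h_k^{\pm1}\}^*$ accepts non-quasigeodesic words such as $h_1h_1^{-1}h_1h_1^{-1}\cdots$, so it is not literally a witness to quasi-isometric embeddability. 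Use instead a regular language of quasigeodesics for $H$, for example the $G$-geodesic words representing elements of $H$, which form a regular language because $H$ is quasi-convex.
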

The last sentence in the previous proposition follows from the fact that one-relator groups with torsion and hyperbolic surface groups are one-relator where the relator word has primitivity rank different from $2$; see \cite{linton2025geometry, puder2014primitive}.  

Related to this, note that the subgroup membership problem is open in general for one-relator groups. It is open whether the Diophantine problem is decidable in $BS(1,n)$. Gersten's conjecture (see \cite[Conjecture 2.6.1]{CFMarcoOneRelatorSurvey}) states that if a one-relator group contains no $BS(1,n)$ for any $n \neq 0$ then $G$ is hyperbolic. This prompts the question:  
   
\begin{qu} Does $BS(1,n)$, with $n > 1$, have a decidable Diophantine problem with subgroup constraints? \end{qu}

In fact $BS(1,n)$ is known to have a decidable rational subset membership problem \cite{Cadilhac2020} so one can ask more generally whether is has a decidable Diophantine problem with rational constraints.

\section{An $F_3$ by $F_2$ group }\label{sec:FreeByFree} 
As discussed in the previous section, it is an open question whether all free-by-cyclic groups 
$F_n \rtimes \Z$ have decidable Diophantine problems.  While that question remains open, in this section we shall give a relatively simple example of a group of the form $F_3 \rtimes F_2$ with an undecidable Diophantine problem.  

As mentioned in the introduction above, as far as the authors are aware, the conjugacy problem is open for groups of the form $F_3 \rtimes F_2$. In the example constructed in \cite{BogopolskiMartinoVentura} of a group of the form $F_3 \rtimes F_{14}$ with undecidable conjugacy problem, the number $14$ arises from an input to the construction which is a $2$-generator $12$-relator group with undecidable word problem due to Borisov \cite{Borisov1969}.   This leads naturally to the question: 

\begin{qu} What is the smallest value of $m$ such that there is a group of the form $F_3 \rtimes F_m$ with undecidable conjugacy problem? \end{qu}

We know that $m=1$ since free-by-cyclic groups have decidable conjugacy problem. On the other hand as mentioned above $m=14$ is possible. 
Related to this, building on Miller's construction \cite{MillerBook}, Wei{\ss} proves in \cite[Theorem 5.8]{WeissThesis} that there are finitely generated subgroups $A, B \leq F_2$ and isomorphism $\phi:A \rightarrow B$ such that the HNN-extension $\langle F_2, t \mid tat^{-1} = \phi(a) \rangle$ has undecidable conjugacy problem.  

Returning to the main result in this section, our aim is to prove Theorem~\ref{thm:FreeByFree} which states that the group  
\[
\langle a, b,c, s, t \mid sas^{-1} = ab, sbs^{-1} = b, scs^{-1} = c, tat^{-1} = ba, tbt^{-1} = b, tct^{-1} = c \rangle
\]
has an undecidable Diophantine problem.

  The study of this free-by-free group begins by computing various
  centralisers. To do this, we first work in the similar free-by-cyclic
  group we studied in Section~\ref{sec:subgroup-constraints}. To avoid
  confusion between this related free-by-cyclic group, and the `principal'
  group of this section \(G\), we will refer to the former as \(L\)
  throughout this section.

\begin{lem} 
    \label{Bob-gp-cent}
	Let \(K = \langle a, b, s \mid sas^{-1} = ab, sbs^{-1} = b \rangle\), and
	\(L = \langle c, q \mid c(qcq^{-1}) = (qcq^{-1})c \rangle\) be the isomorphic copy
	of \(K\) from \cref{lem:H-K-iso}. Then
    \begin{enumerate}
       \item \(C_L(c_i) = \langle c_{i - 1}, c_i, c_{i + 1} \rangle\);
       \label{item:H-cent-ci}
       \item \(C_K(s) = \langle aba^{-1}, b, s \rangle\).
       \label{item:L-cent-r}
    \end{enumerate}
  \end{lem}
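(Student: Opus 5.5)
Part \eqref{item:H-cent-ci} reduces to \cref{Bob-gp-centraliser} by conjugation. That lemma gives \(C_L(c_1) = \langle c_0, c_1, c_2\rangle\). Conjugating by \(q^{i-1}\) sends \(c_j \mapsto c_{j+i-1}\) for every \(j\). Since conjugation maps centralisers to centralisers, \(q^{i-1} C_L(c_1) q^{-(i-1)} = C_L(c_i)\), and this equals \(\langle c_{i-1}, c_i, c_{i+1}\rangle\). Alternatively, one can repeat the proof of \cref{Bob-gp-centraliser} verbatim with \(c_i\) in place of \(c_1\), using the semidirect product decomposition from \cref{lem:H-K-iso} and Servatius' Centralizer Theorem for the RAAG on the infinite line.

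For part \eqref{item:L-cent-r}, I would transport \(C_L(c_0)\) back to \(K\) via the isomorphism \(\Theta\) of \cref{lem:H-K-iso}. There \(\Theta(s) = c = c_0\), so
\[
C_K(s) = \Theta^{-1}(C_L(c_0)) = \Theta^{-1}(\langle c_{-1}, c_0, c_1\rangle) = \langle \Theta^{-1}(c_{-1}), s, \Theta^{-1}(c_1)\rangle.
\]
I then compute preimages.
\begin{itemize}
\item From \(\Theta(a) = cq^{-1}\) we get \(q = \Theta(a^{-1}s)\).
\item Hence \(\Theta^{-1}(c_1) = \Theta^{-1}(qcq^{-1}) = a^{-1}s\,s\,s^{-1}a = a^{-1}sa\).
\item Using \(sas^{-1} = ab\), we have \(s a = ab s\), so \(a^{-1}sa = a^{-1}ab s = bs\).
\item Similarly \(\Theta^{-1}(c_{-1}) = s^{-1}a\,s\,a^{-1}s\). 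Since \(s^{-1}as = s^{-1}(sas^{-1})\cdots\) is awkward, I instead use \(s^{-1}a s = \) the preimage of \(a\) under \(x\mapsto sxs^{-1}\). That automorphism sends \(a\mapsto ab\) and \(b\mapsto b\), so its inverse sends \(a \mapsto ab^{-1}\). Thus \(s^{-1}as = ab^{-1}\).
\item Therefore \(\Theta^{-1}(c_{-1}) = ab^{-1}a^{-1}s\).
\end{itemize}
(As a check, \(\Theta(b) = c_1c_0^{-1}\) also gives \(\Theta^{-1}(c_1) = bs\).)

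So \(C_K(s) = \langle ab^{-1}a^{-1}s,\ s,\ bs\rangle\). Since \(s\) is among the generators, this equals \(\langle ab^{-1}a^{-1}, s, b\rangle = \langle aba^{-1}, b, s\rangle\), as claimed.

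The main obstacle is bookkeeping: getting the conjugation conventions and the direction of \(\Theta\) consistent. I would verify directly that \(aba^{-1}\) commutes with \(s\). Indeed \(s\,aba^{-1}s^{-1} = ab\cdot b\cdot b^{-1}a^{-1} = aba^{-1}\), which confirms the final answer independently of any sign slips.
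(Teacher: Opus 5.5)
Your proposal is correct and follows the paper. Part (2) is exactly the paper's transport of \(C_L(c_0)=\langle c_{-1},c_0,c_1\rangle\) back through \(\Theta\), with the same preimages \(bs\) and \(ab^{-1}a^{-1}s\). For part (1), the paper reruns the argument of \cref{Bob-gp-centraliser} for general \(i\) (semidirect product, abelianisation forcing \(k=0\), then Servatius), which is your stated alternative; your main route of conjugating by \(q^{i-1}\), which shifts each \(c_j\) to \(c_{j+i-1}\), is a valid and slightly more economical shortcut.
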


\begin{proof}
    \eqref{item:H-cent-ci}: First note that from
	the right-angled Artin group presentation of \(H\), we have that \(\langle
	c_{i-1}, c_{i}, c_{i + 1} \rangle \subseteq C_L(c_i)\). Now let \(u q^k \in
	C_L(c_i)\), where \(u \in H\) and \(k \in \mathbb{Z}\). Then \(c_i
	uq^k =_L uq^k c_i =_L uc_{k + i} q^k\), and so \(c_i u =_{H} u c_{k
	+ i}\); that is \(c_i\) is conjugate in \(H\) to \(c_{k + i}\). It
	follows that \(c_i\) and \(c_{k + i}\) are conjugate in the abelianisation
	of \(H\); that is, they are equal in the abelianisation of \(H\). But
	generators in \(H\) are mapped to generators in the infinite rank
	free group, and so the images of \(c_i\) and \(c_{i + k}\) cannot be equal
	in the abelianisation unless \(c_i = c_{i + k}\). Thus \(k = 0\), and so
	\(C_H(c_i) \subseteq H\). Since the \(C_{H}(c_i) = \langle
	c_{i - 1}, c_i, c_{i + 1} \rangle\) (\cite[The Centralizer Theorem]{Servatius89}, see also \cite[Proposition 2.1]{DayWade19}), the result follows.

        \eqref{item:L-cent-r}: We compute \(C_K(s)\) by considering its image in
    the isomorphic group \(L\), using the isomorphism from \cref{lem:H-K-iso}.
    So we instead need to compute \(C_L(c)\). From \eqref{item:H-cent-ci},
    \(C_L(c) = C_L(c_0) = \langle c_{-1}, c_0, c_1 \rangle = \langle
    q^{-1}cq, c, qcq^{-1} \rangle\). The preimage of this subgroup under the
    isomorphism from \cref{lem:H-K-iso} is \(\langle s^{-1} a s a^{-1} s,
    s, bs \rangle = \langle a b^{-1} a^{-1}s, s, bs \rangle =
    \langle aba^{-1}, b, s \rangle\).
\end{proof}

	We now study the object of this section, and as usual, begin by computing
	centralisers. 

	\begin{lem}
		\label{F2byF2-cents}
		Let \(G = \langle a, b,c, s, t \mid sas^{-1} = ab, sbs^{-1} = b, scs^{-1} = c, tat^{-1} = ba,
		tbt^{-1} = b, tct^{-1} = c \rangle\). Then:
		\begin{enumerate}
				\item \(C_G(s) = \langle aba^{-1}, b, c, s \rangle\);
				\label{item:F2byF2-cent-s}
				\item \(C_G(t) = \langle a^{-1} ba, b, c, t \rangle\);
				\label{item:F2byF2-cent-t}
				\item \(C_G(b) = \langle b, s, t \rangle\);
						\label{item:F2byF2-cent-b}
				\item \(C_G(c) = \langle c, s, t \rangle\);
						\label{item:F2byF2-cent-c}
				\item \(C_G(st^k) = \langle b, c, st^k \rangle\) for all \(k \in \Z \setminus \{0\}\).
						\label{item:F2byF2-cent-stk}
		\end{enumerate}
	\end{lem}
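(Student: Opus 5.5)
We will work throughout with the splitting \(G = F \rtimes \langle s, t \rangle\), where \(F = F(a,b,c)\) and \(\langle s,t\rangle \cong F_2\) acts by \(\sigma\colon a\mapsto ab, b\mapsto b, c\mapsto c\) and \(\tau\colon a\mapsto ba, b\mapsto b, c\mapsto c\). Every element of \(G\) has a unique expression \(u w\) with \(u\in F\) and \(w \in F(s,t)\). For any \(g = w\in F(s,t)\) acting via the automorphism \(\alpha_w\), an element \(uv\) centralises \(g\) exactly when two conditions hold. In the quotient \(F(s,t)\) we need \(vw = wv\). In \(F\) we need \(u\,\alpha_v(\cdot)\) compatibility, which after rearranging reads \(\alpha_w(u)\cdot(\text{twist}) = u\). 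So we will first use the quotient map \(G\to F(s,t)\) to cut down the \(v\)-part of each centraliser, and then solve a twisted fixed-point problem in \(F\).

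\textbf{Parts (1), (2) and (5).} In the free group \(F(s,t)\) the centraliser of \(s\) is \(\langle s\rangle\), that of \(t\) is \(\langle t\rangle\), and that of \(st^k\) (which is not a proper power) is \(\langle st^k\rangle\). So an element of \(C_G(g)\) has the form \(u g^m\) with \(u\in F\). Since \(g^m\) already commutes with \(g\), it remains to find \(\Fix(\alpha_g)\subseteq F\), because \(ug = gu\) is equivalent to \(\alpha_g(u) = u\).

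For \(g = s\): \(c\) is a free factor fixed by \(\sigma\), and on \(F(a,b)\) the fixed subgroup of \(\sigma\) is \(\langle b, aba^{-1}\rangle\), by \cref{Bob-gp-cent}\eqref{item:L-cent-r}. To obtain \(\Fix(\sigma)\) on \(F(a,b)*\langle c\rangle\), I would argue via the graph-of-groups / Bestvina–Handel style observation that \(\sigma\) preserves the free factor decomposition \(F(a,b)*\langle c\rangle\) and is the identity on \(\langle c \rangle\). Here I would have to be careful: \(\Fix\) of a factor-preserving automorphism of a free product can be larger than the free product of the fixed subgroups. Concretely, I would verify this with a normal-form argument. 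Write \(u = u_1 c^{e_1} u_2 \cdots\) with \(u_i \in F(a,b)\). Then \(\sigma(u) = \sigma(u_1) c^{e_1} \sigma(u_2)\cdots\) is again reduced, because \(\sigma(u_i)\neq 1\). Uniqueness of free product normal form then forces \(\sigma(u_i) = u_i\) for each \(i\). This gives (1). Part (2) follows by symmetry: \(\tau\) is \(\sigma\) conjugated by the inversion-type symmetry \(a\mapsto a^{-1}\), and this yields \(\langle a^{-1}ba, b\rangle\).

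For (5), \(\alpha_{st^k} = \sigma\tau^k\) acts by \(a\mapsto b^k a b\) (with the ordering to be computed), fixing \(b\) and \(c\). The same free-product argument reduces the problem to \(\Fix\) on \(F(a,b)\). For \(k\neq 0\), a length/cancellation argument should show that this fixed subgroup is exactly \(\langle b\rangle\): any reduced word containing an \(a^{\pm1}\) syllable grows, since the map inserts powers of \(b\) on both sides of each \(a\) that cannot all cancel. I expect this to be the main obstacle. It needs a careful analysis of \(a^{\epsilon}b^{n}a^{\delta}\) junctions, where \(b\)-powers from neighbouring letters can cancel; the dangerous case is \(a b^{j} a^{-1}\)-type subwords. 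I would try to treat this by viewing the map as the automorphism \(x\mapsto b^{k}xb\)-twist on \(a\) and tracking the \(b\)-exponents between consecutive \(a\)-letters. These exponents shift by \(k+1\), \(\pm(k-1)\) and similar amounts, and one can check that no nonempty sequence is invariant unless \(k = 0\).

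\textbf{Parts (3) and (4).} Here \(g = b\) or \(g = c\) lies in \(F\). Write \(x = uv\). Mapping to \(F(s,t)\) gives no constraint, and commutation reads \(u\,\alpha_v(b) = b\,u\). Since \(b\) is fixed by every \(\alpha_v\), this becomes \(u\in C_F(b) = \langle b\rangle\). Hence \(C_G(b) = \langle b\rangle\langle s,t\rangle = \langle b,s,t\rangle\). Part (4) is identical with \(c\).
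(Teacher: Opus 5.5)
Your skeleton is the same as the paper's. You split \(G=F(a,b,c)\rtimes F(s,t)\), use cyclicity of centralisers in \(F(s,t)\) to force the \(F(s,t)\)-part into \(\langle g\rangle\), and reduce to computing \(\Fix(\alpha_g)\). Parts (3) and (4) are exactly the paper's argument.

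The differences are in how the fixed subgroups are computed.

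\emph{The free-product step.} Your normal-form argument replaces the paper's citation of Martino--Ventura and is complete as written. Since \(\alpha_g\) maps \(F(a,b)\) and \(\langle c\rangle\) onto themselves, normal forms are preserved, so \(\Fix(\alpha_g)=\Fix(\alpha_g|_{F(a,b)})*\langle c\rangle\) exactly. The enlargement you worry about cannot occur here.

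\emph{Part (2).} Conjugating \(\sigma\) by \(a\mapsto a^{-1}\) gives \(\tau^{-1}\), not \(\tau\). This is harmless, since \(\Fix(\tau^{-1})=\Fix(\tau)\).

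\emph{Part (5).} You replace the paper's appeal to Cohen--Lustig by a direct computation. The paper can only apply Cohen--Lustig for \(k\neq-2\) and handles \(k=-2\) by hand, so your route is more elementary and uniform. However, as written the step is not carried out, and what you say about it is inaccurate.

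First, the claim that every reduced word containing \(a^{\pm1}\) grows is false. For \(k=1\), \(\alpha_{st}(b^{-1}ab^{-1})=b^{-1}\cdot bab\cdot b^{-1}=a\).

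Second, the shifts are not \(k+1,\pm(k-1)\). Write \(\phi(a)=b^kab\), \(\phi(a^{-1})=b^{-1}a^{-1}b^{-k}\), and let \(u=b^{j_0}a^{\epsilon_1}b^{j_1}\cdots a^{\epsilon_n}b^{j_n}\) be reduced with \(n\ge 1\). Then \(\phi(u)=b^{j_0'}a^{\epsilon_1}b^{j_1'}\cdots a^{\epsilon_n}b^{j_n'}\), where:
\begin{itemize}
\item \(j_i'-j_i\) equals \(k+1\), \(0\), \(0\), \(-(k+1)\) at junctions of type \(aa\), \(aa^{-1}\), \(a^{-1}a\), \(a^{-1}a^{-1}\) respectively;
\item \(j_0'-j_0\) equals \(k\) or \(-1\) according as \(\epsilon_1=1\) or \(-1\).
\end{itemize}

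So your ``dangerous'' mixed junctions have shift exactly \(0\). Hence \(j_i'=j_i\neq 0\) there, no \(a\)-letters cancel, and the displayed word is reduced. Therefore \(\phi(u)=u\) forces every shift to vanish.

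Interior junctions alone cannot give the contradiction: for \(k=-1\) all interior shifts are \(0\), because \(\phi\) is then conjugation by \(b^{-1}\) on \(F(a,b)\). The missing observation is that the leading shift, \(k\) or \(-1\), is nonzero for every \(k\neq 0\). With this filled in, \(\Fix(\phi|_{F(a,b)})=\langle b\rangle\) for all \(k\neq 0\) at once. The paper's separate computation for \(k=-2\) is precisely this argument in a special case.
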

	
	\begin{proof}
		We begin by showing \eqref{item:F2byF2-cent-s};
		\eqref{item:F2byF2-cent-t} is analogous. First note that \(b,c, s \in
		C_G(s)\) from the presentation, and \(saba^{-1} = abbb^{-1} a^{-1} s =
		aba^{-1}s\), and so \(aba^{-1} \in C_G(s)\). We have thus shown that
		\(\langle aba^{-1}, b, c, s \rangle \subseteq C_G(s)\). For the reverse
		containment, noting that \(G\) is a semidirect product of \(\langle a,
		b,c  \rangle\) with \(\langle s, t \rangle\), every element of \(G\) is
		expressible in the form \(uv\) with \(u \in \langle a, b, c \rangle\)
		and \(v \in \langle s, t \rangle\).  So let \(uv \in C_G(s)\). Then
		\(uvs = suv = \phi_s(u) \cdot sv\), where \(\phi_s \in \Aut(\langle a,
		b, c \rangle)\) is the automorphism defined by conjugation by \(s\).
		Then \(sv = vs\) and so \(v \in \langle s \rangle\), since centralisers
		in free groups are cyclic. Thus \(v = s^i\) for some \(i \in \Z\).
		Additionally, \(u = \phi_s(u)\), and so \(u \in \Fix(\phi_s)\). We have
		thus shown that \(C_G(s) = \langle \Fix(\phi_s), s \rangle\). To
		complete the proof of (1), it thus suffices to show that \(\Fix(\phi_s)
		= \langle aba^{-1}, b, c \rangle\). Since \(\phi_s\) fixes \(\langle a,
		b\rangle\) and \(\langle c \rangle\) setwise, it suffices (see for
		example \cite[Theorem 3.1]{MartinoVentura}) to compute
		\(\Fix(\phi_s|_{\langle a, b \rangle})\) and \(\Fix(\phi_s|_{\langle
		c\rangle})\).  The latter is immediately \(\langle c \rangle\), so it
		remains to compute the former. Let \(\psi_s = \phi_s|_{\langle a, b
		\rangle} \in \Aut(\langle a, b \rangle)\).

		To show \(\Fix(\psi_s) = \langle aba^{-1}, b \rangle\), we consider the
		free-by-cyclic group \(K = \langle a, b, s \mid sas^{-1} = ab, sbs^{-1}
		= b\rangle\), and note that by the same argument given above, \(C_K(s)
		= \langle s, \Fix(\psi_s)\rangle\). \cref{Bob-gp-cent}
		\eqref{item:L-cent-r} shows that \(C_K(s) = \langle s, aba^{-1}, b\rangle\),
		and so \(\Fix(\psi_s) = \langle aba^{-1}, b \rangle\), as required.

		To show \eqref{item:F2byF2-cent-b} and \eqref{item:F2byF2-cent-c}, let
		\(d \in \{b, c\}\). It is clear from the presentation that
		\(\langle d, s, t \rangle \subseteq C_G(d)\). For the reverse
		containment, let \(uv \in C_G(d)\), where \(u \in \langle a, b, c
		\rangle\) and \(v \in \langle s, t \rangle\). Then \(duv = uvd
		= udv\) (since \(d\) commutes with all elements in \(\langle s, t
		\rangle\)). Thus \(du = ud\); that is \(u \in C_{\langle a, b, c
		\rangle}(d) = \langle d \rangle\). We can conclude that
		\(C_G(d) = \langle d, s, t \rangle\), as required.

		For \eqref{item:F2byF2-cent-stk}, fix \(k \in \Z \setminus \{0\}\), and
		let \(uv \in C_G(st^k)\), with \(u \in \langle a, b, c \rangle\) and \(v
		\in \langle s, t \rangle\).  Then \(uvst^k = st^k uv = \phi_{st^k}(u)
		st^k v\). Then \(v \in C_{\langle s, t \rangle}(st^k)\), and so \(v =
		(st^k)^i\), for some \(i \in \Z\). Furthermore, if \(\phi_{st^k} \in
		\Aut(\langle a, b ,c \rangle)\) denotes the automorphism induced by
		conjugation by \(st^k\), then \(u = \phi_{st^k}(u)\); that is, \(u \in
		\Fix(\phi_{st^k})\).  We have that \(\phi_{st^k}(a) = b^k ab\), 
		\(\phi_{st^k}(b) = b\) and \(\phi_{st^k}(c) = c\). As with
		our proof of \eqref{item:F2byF2-cent-s}, noting that
		\(\phi_{st^k}\) fixes \(\langle a, b \rangle\) and \(\langle c \rangle\)
		setwise, it suffices to show that \(\Fix(\phi_{st^k}|_{\langle a, b \rangle})
		= \langle b \rangle\) and \(\Fix(\phi_{st^k}|_{\langle c \rangle}) = 
		\langle c \rangle\). The latter is immediate, so we consider
		the former.
		Let \(\psi_{st^k} = \psi_{st^k} |_{\langle a, b \rangle}\). We will use
		\cite[Theorem 2]{CohenLustig89} to show that \(\Fix(\psi_{st^k})\) is
		necessarily cyclic. We have that \(\psi_{st^k}(a) = b^k ab\), so for
		this to contain a primary occurrence of \(a\), it must be the only
		occurrence of \(a\). For this to be primary, we must have that \(b^k a
		b \equiv v^{-1} a w\) (that is, equivalent as words), with \(w =
		x^{-1}\cdot \psi_{st^k}(x)\). This implies that \(b = x^{-1} \cdot
		\psi_{st^k}(x)\), for some \(x \in \langle a, b \rangle\). But
		\(\expsum_b(x^{-1} \cdot \psi_{st^k}(x)) = - \expsum_b(x) +
		\expsum_b(x) + (k + 1)\expsum_a(x)\) and so \((k + 1)\expsum_a(x) =
		1\). If \(k \notin \{0, -2\}\), this is a contradiction. Thus
		\(\psi_{st^k}(a)\) contains no primary occurrences of \(a\). There is
		only one occurrence of \(b\) in \(\psi_{st^k}(b) = b\), and thus it
		follows by \cite[Theorem 2]{CohenLustig89}, that the rank of
		\(\Fix(\psi_{st^k})\) is at most \(0 + 1 = 1\). Since it contains the
		cyclic subgroup \(\langle b \rangle\) it must therefore by rank \(1\)
		and equal to \(\langle b \rangle\). We can conclude that \(uv \in
		\langle b, c, st^k \rangle\), and so \(C_G(st^k) \subseteq \langle b,c, st^k
		\rangle\) for all \(k \in \Z \setminus \{0, -2\}\).

It remains to consider the case \(k = -2\). For this case, we note that
		\(\psi_{st^k}(a) = b^{-2} ab\) and \(\psi_{st^k}(b) = b\). Consider a
		word \(u \in \Fix(\psi_{st^k})\). Write \(u \equiv b^{j_0}
		a^{\epsilon_1} b^{j_1} \cdots a^{\epsilon_n} b^{j_n}\), where \(j_\ell
		\in \Z\) and \(\epsilon_\ell \in \{-1, 1\}\) for all
		\(\ell\). For \(\epsilon \in \{-1, 1\}\), define
		\[
				\delta_\epsilon = \begin{cases}
						1 & \epsilon = 1 \\
						0 & \epsilon = -1
				\end{cases}
				\qquad
				\bar{\delta}_\epsilon = \begin{cases}
						0 & \epsilon = 1 \\
						1 & \epsilon = -1.
				\end{cases}
		\]
		Then 
		\[
				\psi_{st^k}(u) = b^{j_0 - (1 + \delta_{\epsilon_0})}
				a^{\epsilon_1} 
				b^{(1 + \bar{\delta}_{\epsilon_1}) + j_1 - (1 + \delta_{\epsilon_2})}
				a^{\epsilon_2} 
				\cdots
				b^{(1 + \bar{\delta}_{\epsilon_{n - 1}}) + j_{n - 1} - (1 + \delta_{\epsilon_n})}
				a^{\epsilon_n}
				b^{(1 + \bar{\delta}_{\epsilon_{n - 1}}) + j_n}
		\]
		So \(u \in \Fix(\psi_{st^k})\) implies that all of these exponents must
		coincide - none of the \(a\)s can cancel, as otherwise, \(\psi_{st^k}(u)\)
		would have fewer occurrences of \(a\)s and \(a^{-1}\)s than \(u\).
		In particular, we must have that \(j_0 = j_0 -(1 + \delta_{\epsilon_0})\),
		which implies that \(\delta_{\epsilon_0} = -1\), a contradiction, since
		the image of \(\delta\) is \(\{0, 1\}\). As a result, we must have no
		occurrences of \(a\) or \(a^{-1}\) in \(u\); that is, \(u \in \langle
		b \rangle\), as required. Noting that \(\langle b \rangle \subseteq
		\Fix(\psi_{st^k})\), it follows that \(\Fix(\psi_{st^k}) = \langle
		b \rangle\) in this case of \(k = -2\). Again, it follows that
		\(C_G(st^k) \subseteq \langle b,c,  st^k \rangle\).

		We have thus shown that for all \(k \in \Z \setminus \{0\}\), we have
		\(C_G(st^k) \subseteq \langle b,c, st^k \rangle\). The reverse
		containment is immediate from the presentation for \(G\).
	\end{proof}

	We next compute various definable subsets. When we embed Hilbert's tenth
	problem into \(G\), our copy of \(\Z\) will be \(\langle t \rangle\),
	although as will become clear during this proof, the exponents on each
	of the letters can be `transferred' to one another using equations.

	\begin{lem}
		\label{F2byF2-defable}
		Let \(G = \langle a, b, c, s, t \mid sas^{-1} = ab, sbs^{-1} = b,
		scs^{-1} = c, tat^{-1} = ba, tbt^{-1} = b, tct^{-1} = c \rangle\). 
		The following subsets of \(G\) are equationally definable:
		\begin{enumerate}
			\item \(\langle b \rangle\);
					\label{item:F2byF2-b-defable}
			\item \(\langle s \rangle\);
					\label{item:F2byF2-s-defable}
			\item \(\langle t \rangle\);
					\label{item:F2byF2-t-defable}
			\item \(\{(s^i, t^{i}) \mid i \in \Z\}\);
					\label{item:F2byF2-siti-defable}
			\item \(\langle s, t \rangle\).
					\label{item:F2byF2-st-defable}
		\end{enumerate}
	\end{lem}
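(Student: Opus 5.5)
The plan is to obtain every set as an intersection of centralisers from \cref{F2byF2-cents}, cut down further by one extra equation where needed. Centralisers $C_G(g)$ are defined by the single equation $Xg=gX$. An intersection of definable sets is definable, by imposing several systems on the same variable. Throughout I use that $G=\langle a,b,c\rangle\rtimes\langle s,t\rangle$ with both factors free, so every element has a unique expression $uv$ with $u\in\langle a,b,c\rangle$ and $v\in\langle s,t\rangle$.

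\emph{Step 1: part (5).} I would show $C_G(b)\cap C_G(c)=\langle s,t\rangle$. By \cref{F2byF2-cents}(3),(4) we have $C_G(b)=\langle b,s,t\rangle$ and $C_G(c)=\langle c,s,t\rangle$. Since $b$ and $c$ commute with $s$ and $t$, their elements have the form $b^iv$ and $c^jv$ respectively, with $v\in\langle s,t\rangle$. Uniqueness of the $uv$ form forces $b^i=c^j$ in the free group $\langle a,b,c\rangle$, hence $i=j=0$.

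\emph{Step 2: part (1).} I would take $C_G(st)\cap C_G(st^2)\cap C_G(b)$. By \cref{F2byF2-cents}(5), $C_G(st^k)=\langle b,c\rangle\times\langle st^k\rangle$, because $b$ and $c$ commute with $st^k$. An element of the first intersection is $u(st)^i=u'(st^2)^j$ with $u,u'\in\langle b,c\rangle$. Comparing $\langle s,t\rangle$-components gives $(st)^i=(st^2)^j$ in the free group $\langle s,t\rangle$, so $i=j=0$. Hence the first intersection is $\langle b,c\rangle$. Intersecting with $\langle b,s,t\rangle$ again by uniqueness of the normal form leaves $\langle b,c\rangle\cap\langle b\rangle=\langle b\rangle$.

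\emph{Step 3: parts (2) and (3).} I would intersect $C_G(s)=\langle aba^{-1},b,c,s\rangle$ with the definable set $\langle s,t\rangle$ from Step 1. The first group equals $\Fix(\phi_s)\rtimes\langle s\rangle$, so its elements are $us^i$ with $u\in\Fix(\phi_s)$. Such an element lies in $\langle s,t\rangle$ only if $u=1$, giving $\langle s\rangle$. The argument for $\langle t\rangle$ is symmetric, using \cref{F2byF2-cents}(2).

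\emph{Step 4: part (4).} From the relations, $s^ias^{-i}=ab^i$ and $t^jat^{-j}=b^ja$ for all $i,j\in\Z$; this is a quick induction using that $s$ and $t$ fix $b$. I would therefore take $X\in\langle s\rangle$ and $Y\in\langle t\rangle$, constrained via Step 3, together with the single equation
\[
a^{-1}XaX^{-1}=YaY^{-1}a^{-1}.
\]
For $X=s^i$ and $Y=t^j$ the left side is $b^i$ and the right side is $b^j$. So the solution set is exactly $\{(s^i,t^i)\mid i\in\Z\}$.

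The main obstacle I expect is the bookkeeping in Steps 2 and 3. Each intersection claim depends on rewriting a centraliser as (fixed subgroup)$\,\rtimes\,$(cyclic group) and then appealing to uniqueness of the semidirect-product decomposition. Once the elements are written in the form $uv$ this is routine.
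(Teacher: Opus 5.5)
Your proposal is correct. It follows the paper's overall strategy of intersecting centralisers from \cref{F2byF2-cents} and using $s^ias^{-i}=ab^i$, $t^iat^{-i}=b^ia$, but it organises the dependencies between the parts differently.

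Part (5) is proved exactly as in the paper.

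For part (1), the paper takes $C_G(s)\cap C_G(t)\cap C_G(b)$. That requires $\langle aba^{-1},b,c\rangle\cap\langle a^{-1}ba,b,c\rangle=\langle b,c\rangle$, a free-group intersection the paper asserts without detail. Your choice of $C_G(st)\cap C_G(st^2)$ replaces this with the trivial fact that $(st)^i=(st^2)^j$ forces $i=j=0$ in $\langle s,t\rangle$. The cost is reliance on part (5) of \cref{F2byF2-cents}, but only for $k=1,2$, which the Cohen--Lustig part of that proof covers.

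For parts (2) and (3), the paper derives $\langle s\rangle$ from $\langle b\rangle$. It uses the equation $YaY^{-1}=aX$ with $Y\in C_G(s)$, then computes $C(ab^i)=\langle ab^i\rangle$ in the free group. You instead intersect with the already definable $\langle s,t\rangle$, which is shorter. Indeed $C_G(s)\cap\langle s,t\rangle=C_{\langle s,t\rangle}(s)=\langle s\rangle$ needs only that centralisers in a free group are cyclic, not even the description of $C_G(s)$.

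The paper's route does produce the pairs $(b^i,s^i)$, which it reuses in (4). Your single equation $a^{-1}XaX^{-1}=YaY^{-1}a^{-1}$ removes the auxiliary $\langle b\rangle$-variable, so in your version part (4) does not depend on part (1) at all.
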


	\begin{proof}
		For \eqref{item:F2byF2-b-defable}, note that \(\langle b, c \rangle
		= \langle aba^{-1}, b,c, s \rangle \cap \langle a^{-1}ba, b,c, t \rangle
		= C_G(s) \cap C_G(t)\), by \cref{F2byF2-cents}(1) and (2), and so is
		equationally definable. We then intersect this with \(C_G(b)\), noting
		that \(C_G(b) \cap \langle a, b, c \rangle = \langle b, c\rangle\) by
		\cref{F2byF2-cents}\eqref{item:F2byF2-cent-b},
		and so \(\langle b, c \rangle \cap C_G(b) = \langle b \rangle\)
		is equationally definable.

		For \eqref{item:F2byF2-s-defable}, let \(\mathcal E\) be a system of
		equations in \(G\) with variables \(X\) and \(Y\) the set of solutions
		to which is \(\langle b \rangle \times \langle aba^{-1}, b, c, s
		\rangle\), which exists by \eqref{item:F2byF2-b-defable} and
		\cref{F2byF2-cents}\eqref{item:F2byF2-cent-s}. Then consider the
		equation \(Ya Y^{-1} = a X\). Since
		\(s^i a s^{-i} = ab^i\) for all \(i \in \Z\), we have that the set of
		solutions to \((X, Y)\) in the system \(\mathcal E \wedge (Ya Y^{-1} =
		aX)\) contains the set \(\{(b^i, s^i) \mid i \in \Z\}\). Now let \((b^i,
		us^j)\) be a solution to \((X, Y)\) in the system \(\mathcal E \wedge
		(YaY^{-1} = aX)\), where \(u \in \langle aba^{-1}, b, c \rangle\). Then
		\begin{align*}
				ab^i = us^j a s^{-j} u^{-1} =
				uab^j u^{-1}.
		\end{align*}
		By equating \(b\)-exponent sums, it follows that \(i = j\). It thus
		follows that \(u \in C_{\langle a, b, c \rangle}(ab^i) = \langle ab^i
		\rangle\). But (for all possibilities of \(i\)) \(\langle ab^i \rangle
		\cap \langle aba^{-1}, b \rangle  =\{1\}\), and so \(u = 1\). We can
		conclude that the set of solutions to \((X, Y)\) in this system is
		\(\{(b^i, s^i) \mid i \in \Z\}\), and so \(\langle s \rangle\) is
		equationally definable.  \eqref{item:F2byF2-t-defable} follows by
		symmetry.

		We next consider \eqref{item:F2byF2-siti-defable}. Let \(\mathcal E\)
		be a system of equations with variables \(X\), \(Y\) and \(Z\), the set
		of solutions to which is \(\{(b^i, s^j, t^k) \mid i, j, k \in \Z\}\),
		which exists by \eqref{item:F2byF2-b-defable},
		\eqref{item:F2byF2-s-defable} and \eqref{item:F2byF2-t-defable}. We
		then add the equations \(aX = YaY^{-1}\) and \(Xa = Z a Z^{-1}\). Since
		\(s^i as^{-i} = ab^i\) and \(t^i a t^{-i} = b^{i} a\) for all \(i \in
		\Z\), the set of solutions to the system \(\mathcal E \wedge (aX =
		YaY^{-1}) \wedge (Z a Z^{-1})\) is \(\{(b^i, s^i, t^{i}) \mid i \in
		\Z\}\), as required.

		It remains to show \eqref{item:F2byF2-st-defable}. From
		\cref{F2byF2-cents}\eqref{item:F2byF2-cent-b} and
		\eqref{item:F2byF2-cent-c}, we have that \(C_G(b) = \langle b, s, t
		\rangle\) and \(C_G(c) = \langle c, s, t \rangle\). Thus
		\(\langle s, t \rangle = C_G(b) \cap C_G(c)\) is equationally
		definable.
	\end{proof}

	We can now prove the main result of this section - that \(G\) has an
	undecidable Diophantine problem. The `spirit' of the proof is similar
	to that in \cref{GBS-thm}, but many of the technical details differ;
	most notably that we embed Hilbert's tenth problem over the ring of
	integers, rather than the natural numbers. The central idea of
	the proof is to use the action of \(\langle s, t \rangle\) on
	\(\langle a, b \rangle\) to `count' the exponent sums of
	\(s\) and \(t\) in a given element of \(\langle s, t \rangle\).

	\begin{theorem}\label{thm:FreeByFree}
		\label{F2byF2-thm}
	There is a group of the form $F_3 \rtimes F_2$ with an undecidable Diophantine problem. In particular the group
\[
\langle a, b,c, s, t \mid sas^{-1} = ab, sbs^{-1} = b, scs^{-1} = c, tat^{-1} = ba, tbt^{-1} = b, tct^{-1} = c \rangle
\]
has an undecidable Diophantine problem.	
	\end{theorem}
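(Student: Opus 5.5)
We reduce Hilbert's tenth problem over the ring \(\Z\) to the Diophantine problem in \(G\). The copy of \(\Z\) is \(\langle t \rangle\), which is equationally definable by \cref{F2byF2-defable}\eqref{item:F2byF2-t-defable}. Addition is immediate: the equation \(XY = Z\), with \(X, Y, Z \in \langle t \rangle\), has solution set \(\{(t^i, t^j, t^{i+j})\}\). The whole proof therefore consists in showing that the multiplication set \(\{(t^i, t^j, t^{ij}) \mid i,j \in \Z\}\) is equationally definable. Once this is done, \cite{Matijasevic} (in the format of \cite[Corollary 2.14]{ElliottLevine}) completes the argument, exactly as in the proof of \cref{GBS-thm}.

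\textbf{Step 1: reading off exponent sums.} We work inside the definable subgroup \(\langle s,t\rangle\) from \cref{F2byF2-defable}\eqref{item:F2byF2-st-defable}. For \(v \in \langle s,t\rangle\), the action on \(\langle a,b\rangle\) is \(v a v^{-1} = b^{\expsum_t(v)} a b^{\expsum_s(v)}\), since \(b\) is fixed by both \(s\) and \(t\). Hence the equation \(V a V^{-1} = X a Y\), with \(V \in \langle s,t\rangle\) and \(X, Y \in \langle b\rangle\), has solutions exactly \((v, b^{\expsum_t(v)}, b^{\expsum_s(v)})\), because the reduced word \(b^m a b^n\) determines \(m\) and \(n\). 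By \cref{F2byF2-defable}\eqref{item:F2byF2-siti-defable} we can pass between \(b^i\), \(s^i\) and \(t^i\). It follows that for any definable subset of \(\langle s,t\rangle\) we can extract both exponent sums as elements of \(\langle t\rangle\).

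\textbf{Step 2: the key centraliser.} For \(i \neq 0\), \cref{F2byF2-cents}\eqref{item:F2byF2-cent-stk} gives \(C_G(st^i) \cap \langle s,t\rangle = \langle st^i\rangle\). Given a variable \(U \in \langle s,t\rangle\) with \(\expsum_t(U)=i\), impose \(W \in \langle s,t\rangle\) together with \(W (sU) = (sU)W\). Here \(sU\) is not literally \(st^i\), so instead we take a fresh variable \(P\) constrained to \(\langle t\rangle\) and tied to \(i\) through Step 1, and impose \(W sP = sP W\). Then \(W = (st^i)^j\) for some \(j\), and Step 1 yields \(\expsum_s(W) = j\) and \(\expsum_t(W) = ij\). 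Extracting these as \(t^j\) and \(t^{ij}\) gives the triple \((t^i, t^j, t^{ij})\).

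\textbf{Main obstacle: the case \(i = 0\).} When \(i = 0\) the centraliser is \(C_G(s) = \langle aba^{-1}, b, c, s\rangle\), and its intersection with \(\langle s,t\rangle\) is \(\langle s\rangle\). This still gives \(W = s^j\), with \(\expsum_t(W) = 0 = ij\), so the case may in fact be harmless. To be safe, however, one can instead encode \(ij\) using \((i+1)\) and an offset, or replace the triple by the identity \(2ij = (i+j)^2 - i^2 - j^2\) via squaring. For squaring one would define \(\{(t^i, t^{i^2})\}\), which needs only the same centraliser of \(st^i\) with \(j = i\), imposed by Step 1. The step needing real care is verifying that the intersection of \(C_G(st^i)\) with \(\langle s,t\rangle\) is exactly \(\langle st^i\rangle\) uniformly in \(i\), including \(i = 0\), where we use \cref{F2byF2-cents}\eqref{item:F2byF2-cent-s} intersected with \(C_G(b) \cap C_G(c)\). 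We must also check that the restrictions \(X, Y \in \langle b\rangle\) in Step 1 are correctly enforced by \cref{F2byF2-defable}\eqref{item:F2byF2-b-defable}.
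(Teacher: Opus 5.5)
Your proposal is correct and is essentially the paper's argument: you force \(W \in \langle s,t\rangle\) to commute with \(sP\), where \(P = t^i\), so that \(W = (st^i)^j\), and then read off \(j\) and \(ij\) from \(WaW^{-1} = b^{ij}ab^{j}\). The paper compares this conjugate with \((s^jt^k)a(s^jt^k)^{-1} = b^kab^j\) instead of with \(XaY\) for \(X,Y \in \langle b\rangle\), which is the same device. Your suspicion that \(i=0\) is harmless is right: in fact \(C_G(st^i) \cap \langle s,t\rangle = C_{\langle s,t\rangle}(st^i) = \langle st^i\rangle\) for every \(i \in \Z\), simply because \(\langle s,t\rangle\) is free and \(st^i\) is primitive in it. So your version legitimately avoids the paper's restriction to odd \(i\) and its final halving step, and your fallback hedges (offsets, squaring) are unnecessary.
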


\begin{proof}
		Similar to the argument in \cref{GBS-thm}, we reduce Hilbert's tenth
		problem in the ring of integers to the Diophantine problem for \(G\),
		albeit this time we directly embed Hilbert's tenth problem for the ring
		of integers, rather than the analogue for the natural numbers. Since
		Hilbert's tenth problem over the ring of integers is undecidable
		\cite{Matijasevic}, the result will then follow. The copy of \(\Z\)
		inside \(G\) we will be using will be \(\langle t \rangle\), which is
		equationally definable, by
		\cref{F2byF2-defable}\eqref{item:F2byF2-t-defable}. To encode Hilbert's
		tenth problem in the ring of integers into \(G\), it suffices to encode
		addition and multiplication; this is, show that the sets \(\{(t^i, t^j,
		t^{i + j}) \mid i, j \in \Z\}\) and \(\{(t^i, t^j, t^{ij}) \mid i, j
\in \Z\}\) are equationally definable.  The former is straightforward; we
simply take a system \(\mathcal E\) of equations with variables \(X\) and \(Y\),
the set of solutions to which is \(\{(t^i, t^j) \mid i, j \in \Z\}\), which
exists by \cref{F2byF2-defable}\eqref{item:F2byF2-t-defable}, and then consider
the system \(\mathcal E \wedge (Z = X Y)\), where \(Z\) is a new variable. The
set of solutions to \((X, Y, Z)\) in this system is \(\{(t^i, t^j, t^{i + j})
\mid i, j \in \Z\}\), as required.

We next consider multiplication. First let \(\mathcal E\) be a system
		of equations in \(G\), with variables \(X\), \(Y\), \(Z\) and \(W\),
		where the set of solutions to \((W,  X, Y, Z)\) in \(\mathcal
		E\) is \(\{(s^j, t^i, t^j, t^k) \mid i, j, k \in \Z, i
		\textrm{ odd}\}\), which exists by \cref{F2byF2-defable}\eqref{item:F2byF2-s-defable}
		and \eqref{item:F2byF2-t-defable}
(we obtain
		\(i\) odd from all \(i\) by defining \(X = (X')^2 t\), where \(X'\) is
		a variable taking values in \(\langle t \rangle\)). We will add
		additional equations one at a time, and then compute the solution set
		to the system defined at each point, until we have one showing that
		\(\{(t^i, t^j, t^{ij}) \mid i, j \in \Z, i \textrm{ odd}\}\) is
		equationally definable; we discuss the difference between the desired
		set and this set once we have shown it to be equationally definable.
		We first add the equations \(V sX = sX V\), where \(V\) is a new
		variable. Since this simply says \(V\) centralises \(sX\),
		which by \cref{F2byF2-cents}\eqref{item:F2byF2-cent-stk} has solutions \(\{b^q (st^i)^m \mid m,
		q\in \Z\}\)
 (noting \(i \neq 0\) since \(i\) is always odd)
		we can conclude that the set of solutions to \((V, W, X, Y, Z)\)
		is
		\begin{align*}
				&\{(b^q (st^i)^m, s^j, t^i, t^j, t^k) \mid i, j, k, m,q \in \Z, i \textrm{ odd}\}.
		\end{align*}
		We next add the constraint \(V \in \langle s, t \rangle\), which
		is equationally definable by \cref{F2byF2-defable}\eqref{item:F2byF2-st-defable}.
		Using the above notation, the solutions to \(V\) are the elements
		among \(b^q (st^i)^m\) that lie in \(\langle s, t \rangle\); that
		is, when \(q = 0\). We have thus shown that the set
		\begin{align*}
				&\{((st^i)^m, s^j, t^i, t^j, t^k) \mid i, j, k, m \in \Z, i \textrm{ odd}\}.
		\end{align*}
		is equationally definable.

		We next add the equation \(VaV^{-1} = WZ a (WZ)^{-1}\). Using the notation
		above, this can be rewritten as \((st^i)^m a (st^i)^{-m} = (s^j t^k) a
		(s^j t^k)^{-1}\). We have that \((st^i)^m a (st^i)^{-m} = b^{im} a b^m\)
		and \((s^j t^k) a (s^j t^k)^{-1} = b^k a b^j\). We can thus
		conclude that \(m = j\) and \(k = im\), thus making our solution set
		thus far equal to
		\begin{align*}
				&\{((st^i)^j, s^j, t^i, t^j, t^{ij}) \mid i, j \in \Z, i \textrm{ odd}\}.
		\end{align*}

		By looking at the variables \((X, Y, Z)\), we have shown that the set
		\(\{(t^i, t^j, t^{ij}) \mid i, j \in \Z, i \textrm{ odd}\}\) is
		equationally definable in \(G\).  By adding a new variable \(P\) along
		with the equation \(P = Xt\), we also obtain that the set \(\{(t^{2i'},
		t^j, t^{2i'j}) \mid i', j \in \Z\}\) is equationally definable, where
		here \(2i' = i + 1\) (the solutions to \((P, Y, W')\)).  Thus by
		choosing new variables \(Q\) and \(R\) and adding the equations \(Q^2 =
		P\) and \(R^2 = W'\), we obtain that the set of solutions to \((Q, Y,
		R)\) is \(\{(t^{i''}, t^j, t^{i''j}) \mid i'', j \in \Z\}\), as
		required.
\end{proof}

\bibliographystyle{custom} 
\bibliography{references}

\begin{thebibliography}{10}

\bibitem{mathoverflow}
https://mathoverflow.net/questions/160161/infinitely-many-finitely-generated-groups-having-the-same-cayley-graph/160178\#160178.

\bibitem{Levent2026}
L.~Alp\"oge, M.~Bhargava, W.~Ho, and A.~Shnidman.
\newblock Rank stability in quadratic extensions and {H}ilbert's tenth problem for the ring of integers of a number field.
\newblock {\em Invent. Math.}, 243(3):1129--1139, 2026.

\bibitem{AntolinFoniqi22}
Y.~Antol\'in and I.~Foniqi.
\newblock Intersection of parabolic subgroups in even {A}rtin groups of {FC}-type.
\newblock {\em Proc. Edinb. Math. Soc. (2)}, 65(4):938--957, 2022.

\bibitem{AscariCasalsRuizKazachkov}
D.~Ascari, M.~Casals-Ruiz, and I.~Kazachkov.
\newblock {On the isomorphism problem for generalized Baumslag-Solitar groups: invariants and flexible configurations}, 2025.
\newblock arXiv:2508.02306.

\bibitem{BarkThesis}
D.~A. Barkauskas.
\newblock {\em Centralizers in Fundamental Groups of Graphs of Groups}.
\newblock PhD thesis, 2003.

\bibitem{BaumslagMorganShalen}
G.~Baumslag, J.~W. Morgan, and P.~B. Shalen.
\newblock Generalized triangle groups.
\newblock {\em Math. Proc. Cambridge Philos. Soc.}, 102(1):25--31, 1987.

\bibitem{baumslag2002open}
G.~Baumslag, A.~G. Myasnikov, and V.~Shpilrain.
\newblock Open problems in combinatorial group theory.
\newblock {\em Contemporary Mathematics}, 296:1--38, 2002.

\bibitem{Bogopolski2006}
O.~Bogopolski, A.~Martino, O.~Maslakova, and E.~Ventura.
\newblock The conjugacy problem is solvable in free-by-cyclic groups.
\newblock {\em Bull. London Math. Soc.}, 38(5):787--794, 2006.

\bibitem{BogopolskiMartinoVentura}
O.~Bogopolski, A.~Martino, and E.~Ventura.
\newblock Orbit decidability and the conjugacy problem for some extensions of groups.
\newblock {\em Trans. Amer. Math. Soc.}, 362(4):2003--2036, 2010.

\bibitem{BogopolskiBook}
O.~Bogopolski.
\newblock {\em Introduction to group theory}.
\newblock EMS Textbooks in Mathematics. European Mathematical Society (EMS), Z\"urich, 2008.
\newblock Translated, revised and expanded from the 2002 Russian original.

\bibitem{Borisov1969}
V.~V. Borisov.
\newblock Simple examples of groups with unsolvable word problem.
\newblock {\em Mat. Zametki}, 6:521--532, 1969.

\bibitem{bridson2025conjugacy}
M.~R. Bridson, T.~R. Riley, and A.~W. Sale.
\newblock Conjugacy in a family of free-by-cyclic groups.
\newblock {\em arXiv preprint arXiv:2506.01248}, 2025.

\bibitem{Brodskii84}
S.~D. Brodski\u{\i}.
\newblock Equations over groups, and groups with one defining relation.
\newblock {\em Sibirsk. Mat. Zh.}, 25(2):84--103, 1984.

\bibitem{BuchiSenger}
J.~R. B\"uchi and S.~Senger.
\newblock Definability in the existential theory of concatenation and undecidable extensions of this theory.
\newblock {\em Z. Math. Logik Grundlag. Math.}, 34(4):337--342, 1988.

\bibitem{BurilloElder}
J.~Burillo and M.~Elder.
\newblock Metric properties of {B}aumslag-{S}olitar groups.
\newblock {\em Internat. J. Algebra Comput.}, 25(5):799--811, 2015.

\bibitem{Burns1987}
R.~G. Burns, A.~Karrass, and D.~Solitar.
\newblock A note on groups with separable finitely generated subgroups.
\newblock {\em Bull. Austral. Math. Soc.}, 36(1):153--160, 1987.

\bibitem{Cadilhac2020}
M.~Cadilhac, D.~Chistikov, and G.~Zetzsche.
\newblock {Rational Subsets of Baumslag-Solitar Groups}.
\newblock In A.~Czumaj, A.~Dawar, and E.~Merelli, editors, {\em 47th International Colloquium on Automata, Languages, and Programming (ICALP 2020)}, volume 168 of {\em Leibniz International Proceedings in Informatics (LIPIcs)}, pages 116:1--116:16, Dagstuhl, Germany, 2020. Schloss Dagstuhl -- Leibniz-Zentrum f{\"u}r Informatik.

\bibitem{CampbellHeggieRobertsonThomas}
C.~M. Campbell, P.~M. Heggie, E.~F. Robertson, and R.~M. Thomas.
\newblock Cyclically presented groups embedded in one-relator products of cyclic groups.
\newblock {\em Proc. Amer. Math. Soc.}, 118(2):401--408, 1993.

\bibitem{CasalsRuiz2011}
M.~Casals-Ruiz and I.~Kazachkov.
\newblock On systems of equations over free partially commutative groups.
\newblock {\em Mem. Amer. Math. Soc.}, 212(999):viii+153, 2011.

\bibitem{CasalsRuiz2024}
M.~Casals-Ruiz, I.~Kazachkov, and J.~de~la Nuez~Gonz\'alez.
\newblock On the elementary theory of graph products of groups.
\newblock {\em Dissertationes Math.}, 595:92, 2024.

\bibitem{CasalsRuizetal21}
M.~Casals-Ruiz, I.~Kazachkov, and A.~Zakharov.
\newblock Commensurability of {B}aumslag-{S}olitar groups.
\newblock {\em Indiana Univ. Math. J.}, 70(6):2527--2555, 2021.

\bibitem{CiobanuEvettsLevine}
L.~Ciobanu, A.~Evetts, and A.~Levine.
\newblock Effective equation solving, constraints, and growth in virtually abelian groups.
\newblock {\em SIAM J. Appl. Algebra Geom.}, 9(1):235--260, 2025.

\bibitem{CiobanuGarreta}
L.~Ciobanu and A.~Garreta.
\newblock Group equations with abelian predicates.
\newblock {\em Int. Math. Res. Not. IMRN}, (5):4119--4159, 2024.

\bibitem{CiobanuGrayLevineInPrep}
L.~Ciobanu, R.~D. Gray, and A.~Levine.
\newblock The {D}iophantine problem with abelianisation constraints in some free-by-cyclic groups.
\newblock {\em In preparation.}

\bibitem{CiobanuHoltRees}
L.~Ciobanu, D.~Holt, and S.~Rees.
\newblock Equations in groups that are virtually direct products.
\newblock {\em J. Algebra}, 545:88--99, 2020.

\bibitem{CohenLustig89}
M.~M. Cohen and M.~Lustig.
\newblock On the dynamics and the fixed subgroup of a free group automorphism.
\newblock {\em Invent. Math.}, 96(3):613--638, 1989.

\bibitem{Collins1977}
D.~J. Collins.
\newblock Generation and presentation of one-relator groups with centre.
\newblock {\em Mathematische Zeitschrift}, 157(1):63--77, 1977.

\bibitem{Dahmani}
F.~Dahmani.
\newblock Existential questions in (relatively) hyperbolic groups.
\newblock {\em Israel J. Math.}, 173:91--124, 2009.

\bibitem{dahmani_guirardel}
F.~Dahmani and V.~Guirardel.
\newblock Foliations for solving equations in groups: free, virtually free, and hyperbolic groups.
\newblock {\em J. Topol.}, 3(2):343--404, 2010.

\bibitem{dahmani2009existential}
F.~Dahmani.
\newblock Existential questions in (relatively) hyperbolic groups.
\newblock {\em Israel Journal of Mathematics}, 173:91--124, 2009.

\bibitem{DahmaniRuoyu}
F.~Dahmani and R.~Li.
\newblock Relative hyperbolicity for automorphisms of free products and free groups.
\newblock {\em J. Topol. Anal.}, 14(1):55--92, 2022.

\bibitem{DayWade19}
M.~B. Day and R.~D. Wade.
\newblock Relative automorphism groups of right-angled {A}rtin groups.
\newblock {\em J. Topol.}, 12(3):759--798, 2019.

\bibitem{delaHarpeBook}
P.~de~la Harpe.
\newblock {\em Topics in geometric group theory}.
\newblock Chicago Lectures in Mathematics. University of Chicago Press, Chicago, IL, 2000.

\bibitem{DiekertMuscholl}
V.~Diekert and A.~Muscholl.
\newblock Solvability of equations in free partially commutative groups is decidable.
\newblock In {\em Automata, languages and programming}, volume 2076 of {\em Lecture Notes in Comput. Sci.}, pages 543--554. Springer, Berlin, 2001.

\bibitem{HydraGroups}
W.~Dison and T.~R. Riley.
\newblock Hydra groups.
\newblock {\em Comment. Math. Helv.}, 88(3):507--540, 2013.

\bibitem{Dong25}
R.~Dong.
\newblock Linear equations with monomial constraints and decision problems in abelian-by-cyclic groups.
\newblock In {\em Proceedings of the 2025 {A}nnual {ACM}-{SIAM} {S}ymposium on {D}iscrete {A}lgorithms ({SODA})}, pages 1892--1908. SIAM, Philadelphia, PA, 2025.

\bibitem{duchin_liang_shapiro}
M.~Duchin, H.~Liang, and M.~Shapiro.
\newblock Equations in nilpotent groups.
\newblock {\em Proc. Amer. Math. Soc.}, 143(11):4723--4731, 2015.

\bibitem{Duncan2023}
A.~Duncan, A.~Evetts, D.~F. Holt, and S.~Rees.
\newblock Using {EDT}0{L} systems to solve some equations in the solvable {B}aumslag-{S}olitar groups.
\newblock {\em J. Algebra}, 630:434--456, 2023.

\bibitem{Duncan1991}
A.~J. Duncan and J.~Howie.
\newblock The genus problem for one-relator products of locally indicable groups.
\newblock {\em Math. Z.}, 208(2):225--237, 1991.

\bibitem{DuncanHowie92}
A.~J. Duncan and J.~Howie.
\newblock Weinbaum's conjecture on unique subwords of nonperiodic words.
\newblock {\em Proc. Amer. Math. Soc.}, 115(4):947--954, 1992.

\bibitem{DuncanHowie93}
A.~J. Duncan and J.~Howie.
\newblock One relator products with high-powered relators.
\newblock In {\em Geometric group theory, {V}ol.\ 1 ({S}ussex, 1991)}, volume 181 of {\em London Math. Soc. Lecture Note Ser.}, pages 48--74. Cambridge Univ. Press, Cambridge, 1993.

\bibitem{DuncanArye}
A.~J. Duncan and A.~Juh\'asz.
\newblock One-relator quotients of right-angled {A}rtin groups.
\newblock {\em J. Algebra}, 622:506--555, 2023.

\bibitem{ElliottLevine}
L.~Elliott and A.~Levine.
\newblock The {D}iophantine problem in {T}hompson's group {$F$}.
\newblock {\em Math. Comp.}, 95(360):2013--2023, 2026.

\bibitem{Ersov72}
J.~L. Er\v{s}ov.
\newblock Elementary group theories.
\newblock {\em Dokl. Akad. Nauk SSSR}, 203:1240--1243, 1972.

\bibitem{FineHowieRosenberger88}
B.~Fine, J.~Howie, and G.~Rosenberger.
\newblock One-relator quotients and free products of cyclics.
\newblock {\em Proc. Amer. Math. Soc.}, 102(2):249--254, 1988.

\bibitem{FineRosenberger}
B.~Fine and G.~Rosenberger.
\newblock {\em Algebraic generalizations of discrete groups}, volume 223 of {\em Monographs and Textbooks in Pure and Applied Mathematics}.
\newblock Marcel Dekker, Inc., New York, 1999.
\newblock A path to combinatorial group theory through one-relator products.

\bibitem{FriedlWilton}
S.~Friedl and H.~Wilton.
\newblock The membership problem for 3-manifold groups is solvable.
\newblock {\em Algebr. Geom. Topol.}, 16(4):1827--1850, 2016.

\bibitem{Garreta2021}
A.~Garreta and R.~D. Gray.
\newblock On equations and first-order theory of one-relator monoids.
\newblock {\em Inform. and Comput.}, 281:Paper No. 104745, 19, 2021.

\bibitem{random_nilpotent_eqns}
A.~Garreta, A.~Miasnikov, and D.~Ovchinnikov.
\newblock Random nilpotent groups, polycyclic presentations, and {D}iophantine problems.
\newblock {\em Groups Complex. Cryptol.}, 9(2):99--115, 2017.

\bibitem{Garreta2020}
A.~Garreta, A.~Miasnikov, and D.~Ovchinnikov.
\newblock Diophantine problems in solvable groups.
\newblock {\em Bull. Math. Sci.}, 10(1):2050005, 27, 2020.

\bibitem{gautero2007mapping}
F.~Gautero and M.~Lustig.
\newblock The mapping-torus of a free group automorphism is hyperbolic relative to the canonical subgroups of polynomial growth.
\newblock {\em arXiv preprint arXiv:0707.0822}, 2007.

\bibitem{Ghosh23}
P.~Ghosh.
\newblock Relative hyperbolicity of free-by-cyclic extensions.
\newblock {\em Compos. Math.}, 159(1):153--183, 2023.

\bibitem{GrayLevineArxiv251207690}
R.~D. Gray and A.~Levine.
\newblock Decidability of equations and first-order theory in {S}eifert 3-manifold groups.
\newblock {\em arXiv preprint arXiv:2512.07690}, 2025.

\bibitem{Gromov1987}
M.~Gromov.
\newblock Hyperbolic groups.
\newblock In {\em Essays in group theory}, volume~8 of {\em Math. Sci. Res. Inst. Publ.}, pages 75--263. Springer, New York, 1987.

\bibitem{Howie81}
J.~Howie.
\newblock On pairs of {$2$}-complexes and systems of equations over groups.
\newblock {\em J. Reine Angew. Math.}, 324:165--174, 1981.

\bibitem{Howie84}
J.~Howie.
\newblock Cohomology of one-relator products of locally indicable groups.
\newblock {\em J. London Math. Soc. (2)}, 30(3):419--430, 1984.

\bibitem{HowieFourthPowers}
J.~Howie.
\newblock The quotient of a free product of groups by a single high-powered relator. {II}.\ {F}ourth powers.
\newblock {\em Proc. London Math. Soc. (3)}, 61(1):33--62, 1990.

\bibitem{JaikinZapirainLintonSanchezPeralta}
A.~Jaikin-Zapirain, M.~Linton, and P.~Sánchez-Peralta.
\newblock Group pairs, coherence and {F}arrell--{J}ones conjecture for {$K_0$}.
\newblock {\em arXiv preprint arXiv:2510.23518}, 2025.

\bibitem{Kharlampovich2019}
O.~Kharlampovich and L.~L\'opez.
\newblock Bi-interpretability of some monoids with the arithmetic and applications.
\newblock {\em Semigroup Forum}, 99(1):126--139, 2019.

\bibitem{kharlampovich2020diophantine}
O.~Kharlampovich, L.~L{\'o}pez, and A.~Myasnikov.
\newblock The {D}iophantine problem in some metabelian groups.
\newblock {\em Mathematics of computation}, 89(325):2507--2519. Corrected version: arxiv.org/abs/1903.10068, 2020.

\bibitem{KharlampovichMiasnikov2025}
O.~Kharlampovich and A.~Miasnikov.
\newblock The {D}iophantine problem in iterated wreath products of free abelian groups is undecidable.
\newblock {\em arXiv e-prints}, arXiv:2502.09442v1, 2025.

\bibitem{koymans2024hilbert}
P.~Koymans and C.~Pagano.
\newblock Hilbert's tenth problem via additive combinatorics.
\newblock {\em arXiv preprint arXiv:2412.01768}, 2024.

\bibitem{Kropholler1990}
P.~H. Kropholler.
\newblock Baumslag-{S}olitar groups and some other groups of cohomological dimension two.
\newblock {\em Commentarii Mathematici Helvetici}, 65(1):547--558, 1990.

\bibitem{Kropholler1993}
P.~H. Kropholler.
\newblock A group theoretic proof of the torus theorem.
\newblock {\em Geometric group theory}, 1:138--158, 1993.

\bibitem{kudlinska2024subgroup}
M.~Kudlinska.
\newblock On subgroup separability of free-by-cyclic and deficiency 1 groups.
\newblock {\em Bulletin of the London Mathematical Society}, 56(1):338--351, 2024.

\bibitem{Levine2022}
A.~Levine.
\newblock Equations in virtually class 2 nilpotent groups.
\newblock {\em J. Groups Complex. Cryptol.}, 14(1):Paper No. 2, 17, 2022.

\bibitem{levitt2015generalized}
G.~Levitt.
\newblock Generalized {B}aumslag--{S}olitar groups: rank and finite index subgroups.
\newblock In {\em Annales de l'Institut Fourier}, volume~65, pages 725--762, 2015.

\bibitem{linton2025geometry}
M.~Linton.
\newblock The geometry of subgroups of mapping tori of free groups.
\newblock {\em arXiv preprint arXiv:2510.03145}, 2025.

\bibitem{CFMarcoOneRelatorSurvey}
M.~Linton and C.-F. Nyberg-Brodda.
\newblock The theory of one-relator groups: history and recent progress.
\newblock {\em arXiv preprint arXiv:2501.18306}, 2025.

\bibitem{Lipschutz1966}
S.~Lipschutz.
\newblock Generalization of {D}ehn's result on the conjugacy problem.
\newblock {\em Proc. Amer. Math. Soc.}, 17:759--762, 1966.

\bibitem{Logan}
A.~D. Logan.
\newblock The conjugacy problem for ascending {HNN}-extensions of free groups.
\newblock {\em Proc. Lond. Math. Soc. (3)}, 131(4):Paper No. e70088, 24, 2025.

\bibitem{LohreySenizergues}
M.~Lohrey and G.~S\'{e}nizergues.
\newblock Theories of {HNN}-extensions and amalgamated products.
\newblock In {\em Automata, languages and programming. {P}art {II}}, volume 4052 of {\em Lecture Notes in Comput. Sci.}, pages 504--515. Springer, Berlin, 2006.

\bibitem{lyndon1977combinatorial}
R.~C. Lyndon, P.~E. Schupp, R.~Lyndon, and P.~Schupp.
\newblock {\em Combinatorial group theory}, volume~89.
\newblock Springer, 1977.

\bibitem{magnus2004combinatorial}
W.~Magnus, A.~Karrass, and D.~Solitar.
\newblock {\em Combinatorial group theory: Presentations of groups in terms of generators and relations}.
\newblock Courier Corporation, 2004.

\bibitem{Makanin_semigroups}
G.~S. Makanin.
\newblock The problem of the solvability of equations in a free semigroup.
\newblock {\em Mat. Sb. (N.S.)}, 103(145)(2):147--236, 319, 1977.

\bibitem{Makanin_eqns_free_group}
G.~S. Makanin.
\newblock Equations in a free group.
\newblock {\em Izv. Akad. Nauk SSSR Ser. Mat.}, 46(6):1199--1273, 1344, 1982.

\bibitem{Mandel2023}
R.~Mandel and A.~Ushakov.
\newblock The {D}iophantine problem for systems of algebraic equations with exponents.
\newblock {\em J. Algebra}, 636:779--803, 2023.

\bibitem{Mandel2023b}
R.~Mandel and A.~Ushakov.
\newblock Quadratic equations in metabelian {B}aumslag-{S}olitar groups.
\newblock {\em Internat. J. Algebra Comput.}, 33(6):1195--1216, 2023.

\bibitem{MartinoVentura}
A.~Martino and E.~Ventura.
\newblock A description of auto-fixed subgroups in a free group.
\newblock {\em Topology}, 43(5):1133--1164, 2004.

\bibitem{Matijasevic}
J.~V. Matijasevi\v{c}.
\newblock The {D}iophantineness of enumerable sets.
\newblock {\em Dokl. Akad. Nauk SSSR}, 191:279--282, 1970.

\bibitem{McCool1991}
J.~McCool.
\newblock A class of one-relator groups with centre.
\newblock {\em Bulletin of the Australian Mathematical Society}, 44(2):245--252, 1991.

\bibitem{MillerBook}
C.~F. Miller, III.
\newblock {\em On group-theoretic decision problems and their classification}, volume No. 68 of {\em Annals of Mathematics Studies}.
\newblock Princeton University Press, Princeton, NJ; University of Tokyo Press, Tokyo, 1971.

\bibitem{Minasyan2013}
A.~Minasyan and P.~Zalesskii.
\newblock One-relator groups with torsion are conjugacy separable.
\newblock {\em J. Algebra}, 382:39--45, 2013.

\bibitem{MuschollThesis}
A.~Muscholl.
\newblock Decision and complexity issues on concurrent systems, 1999.
\newblock Habilitation thesis.

\bibitem{noskov1984elementary}
G.~A. Noskov.
\newblock On the elementary theory of a finitely generated almost solvable group.
\newblock {\em Mathematics of the USSR-Izvestiya}, 22(3):465, 1984.

\bibitem{CFOneRelatorMonoidsSuvey}
C.-F. Nyberg-Brodda.
\newblock The word problem for one-relation monoids: a survey.
\newblock {\em Semigroup Forum}, 103(2):297--355, 2021.

\bibitem{CFOneRelatorDiophantine}
C.-F. Nyberg-Brodda.
\newblock On the {D}iophantine problem in some one-relator groups.
\newblock {\em arXiv e-prints}, arXiv:2208.07145, 2022.

\bibitem{Pietrowski1974}
A.~Pietrowski.
\newblock The isomorphism problem for one-relator groups with non-trivial centre.
\newblock {\em Mathematische Zeitschrift}, 136(2):95--106, 1974.

\bibitem{Pride2008}
S.~J. Pride.
\newblock On the residual finiteness and other properties of (relative) one-relator groups.
\newblock {\em Proc. Amer. Math. Soc.}, 136(2):377--386, 2008.

\bibitem{puder2014primitive}
D.~Puder.
\newblock Primitive words, free factors and measure preservation.
\newblock {\em Israel Journal of Mathematics}, 201(1):25--73, 2014.

\bibitem{Razborov_thesis}
A.~A. Razborov.
\newblock {\em On systems of equations in free groups}.
\newblock PhD thesis, Steklov Institute of Mathematics, 1971.
\newblock In Russian.

\bibitem{Sela}
E.~Rips and Z.~Sela.
\newblock Canonical representatives and equations in hyperbolic groups.
\newblock {\em Invent. Math.}, 120(3):489--512, 1995.

\bibitem{romanovskiui1981elementary}
N.~S. Romanovski\u{\i}.
\newblock The elementary theory of an almost polycyclic group.
\newblock {\em Mat. Sb. (N.S.)}, 111(153)(1):135--143, 160, 1980.

\bibitem{Servatius89}
H.~Servatius.
\newblock Automorphisms of graph groups.
\newblock {\em J. Algebra}, 126(1):34--60, 1989.

\bibitem{ShortThesis}
H.~Short.
\newblock {\em Topological methods in group theory: the adjunction problem}.
\newblock PhD thesis, University of Warwick, 1983.

\bibitem{Vogtmann15}
K.~Vogtmann.
\newblock {$GL(n,\Bbb Z)$}, {$Out(F_n)$} and everything in between: automorphism groups of {RAAG}s.
\newblock In {\em Groups {S}t {A}ndrews 2013}, volume 422 of {\em London Math. Soc. Lecture Note Ser.}, pages 105--127. Cambridge Univ. Press, Cambridge, 2015.

\bibitem{WeissThesis}
A.~Wei{\ss}.
\newblock {\em On the complexity of conjugacy in amalgamated products and HNN extensions}.
\newblock PhD thesis, Universit\:{a}t Stuttgart, 2015.

\bibitem{Whyte01}
K.~Whyte.
\newblock The large scale geometry of the higher {B}aumslag-{S}olitar groups.
\newblock {\em Geom. Funct. Anal.}, 11(6):1327--1343, 2001.

\end{thebibliography}

\end{document}